\documentclass[11pt]{article}
\usepackage{amsthm, amsmath, amssymb, amsfonts, url, booktabs, tikz, setspace, fancyhdr, bm}
\usepackage{geometry}
\usepackage{hyperref, enumerate}
\usepackage[shortlabels]{enumitem}
\usepackage[babel]{microtype}
\usepackage[english]{babel}
\usepackage[capitalise]{cleveref}
\usepackage{comment}
\usepackage{bbm}
\usepackage{csquotes}
\usepackage{mathabx}
\usepackage{tikz}
\usepackage{graphicx}
\usepackage{float}
\usepackage{xcolor}
\usepackage{mathtools}
\usepackage{mathrsfs}

\usetikzlibrary{positioning, arrows.meta, shapes.geometric}

\counterwithin{figure}{section}

\newtheorem{theorem}{Theorem}[section]
\newtheorem{prop}[theorem]{Proposition}
\newtheorem{lemma}[theorem]{Lemma}

\newtheorem{conj}[theorem]{Conjecture}
\newtheorem{claim}[theorem]{Claim}

\usetikzlibrary{decorations.pathmorphing}

\theoremstyle{definition}
\newtheorem{defn}[theorem]{Definition}
\newtheorem*{defn-non}{Definition}

\newtheorem{ques}[theorem]{Question}

\newtheorem{rmk}[theorem]{Remark}

\newlist{Case}{enumerate}{2}
\setlist[Case, 1]{%
    label           =   {\bfseries Case \arabic*.},
    labelindent=1em ,labelwidth=1.3cm, labelsep*=1em, leftmargin =!
}
\setlist[Case, 2]{%
    label           =   {\bfseries Subcase \arabic{Casei}.\arabic*.},
    labelindent=-1em ,labelwidth=1.3cm, labelsep*=1em, leftmargin =!
}

\newenvironment{poc}{\begin{proof}[Proof of the claim]}{\end{proof}}

\usepackage{todonotes}

\newcommand{\F}{\mathbb{F}}

\newcommand{\ii}{\mathrm{i}}

\newcommand{\T}{\mathbb T}
\newcommand{\R}{\mathbb R}
\newcommand{\Z}{\mathbb Z}

\title{Kusner's conjecture: Exact values and linear bounds} 
\author{
Hong-Jun Ge\thanks{School of Mathematical Sciences, University of Science and Technology of China,
Hefei, China. Emails: gehj22@mail.ustc.edu.cn and zy19700816@mail.ustc.edu.cn.}
\and 
Zixiang Xu\thanks{School of Mathematical Sciences, Zhejiang University, Hangzhou, China. Email: zixiangxu@zju.edu.cn. }
\and 
Yang Zhou\footnotemark[1]
}

\date{}
\begin{document}
\maketitle

\begin{abstract}
In 1983, Kusner conjectured that the 
largest equilateral set in \(\mathbb{R}^{n}\) with metric \(\ell_{p}\) has cardinality \(n+1\) when \(1<p<\infty\) and \(2n\) when \(p=1.\) This conjecture was proved only in the isolated cases $p=2$ and $p=4$, and was disproved when \(1<p<2\). The best general upper bound
\(O_p(n^{\frac{2p+2}{2p-1}})\) is due to the celebrated work of Alon and
Pudl\'ak~[GAFA, 2003]. Our main contributions include:
\begin{enumerate}
    \item[\textup{(1)}] We prove Kusner's conjecture for every dimension \(n\ge 1\) when $2\le p\le 4$. More generally, for every integer $k\ge 0$ and every $p\in[4k+2,4k+4]$, every equilateral set in $\mathbb{R}^{n}$ with metric $\ell_p$ has cardinality at most $(2k+1)n+1$. On the complementary intervals $p\in(4k,4k+2)$ with $p\geq 1$, we obtain the almost linear bound $O_p(n\log n)$.
    \item[\textup{(2)}] We also consider the analogous problem on the torus $\mathbb{T}^n$, recently initiated by Alon, where the cyclic distance makes the problem substantially more delicate than in \(\mathbb R^n\). We prove the almost linear bound $O_p(n\log n)$ for $1\le p\le 2$ and $O_p(n^{\frac{3}{2}-\frac{1}{p}})$ for every fixed real $p>2$, improving Alon's bounds $O_p(n^{2+\frac{2}{\lfloor p\rfloor}})$ for all finite $p\ge 1$.
\end{enumerate}
\end{abstract}

\section{Introduction}
A subset \(A\) of a metric space is called \emph{equilateral} if all distances
between distinct points of \(A\) are equal.
Equilateral sets are among the most basic and extensively studied configurations
in metric and discrete geometry.  Besides their classical role in Euclidean and
normed spaces, they are closely connected to coding theory, spherical codes, and
other problems in algebraic and extremal combinatorics~\cite{2018Inven,2016SIDMABukh,2021Annals,1973JoA,1966Vanlint}.  In this paper, we study
the maximum cardinality of an equilateral set in two families of spaces: the
normed spaces \((\mathbb R^n,\|\cdot \|_p)\) and their toroidal metric analogues
\((\mathbb T^n,d_p)\).

\subsection{Kusner's conjecture}
The study of equilateral sets in normed spaces is a classical topic in discrete geometry and Banach space theory. 
For a normed space $X$, let $e(X)$ denote the maximum cardinality of an equilateral set in $X$.
For \(1\le p<\infty\), we write
$\ell_p^n := (\mathbb R^n,\|\cdot\|_p)$,
where \(\mathbb R^n\) is equipped with the \(\ell_p\)-norm
$\|\boldsymbol{x}\|_p
:=
\left(\sum_{i=1}^n |x_i|^p\right)^{1/p}.$
The Euclidean case is completely understood: every equilateral set in $(\mathbb{R}^{n},\|\cdot\|_2)$ has at most $n+1$ points, with equality precisely for the vertex sets of regular simplices. Hence
\(
e(\ell_2^n)=n+1.
\)
For general \(\ell_{p}\)-norms, however, the situation is much more delicate. 
A theorem of Petty~\cite{1971Petty} shows that every equilateral set in an $n$-dimensional normed space has at most $2^n$ points, with equality only for $\ell_\infty^n$. 
On the other hand, for every $1<p<\infty$, the standard basis vectors together with a suitable multiple of the all-ones vector form an equilateral set of size $n+1$, while in $(\mathbb{R}^{n},\|\cdot\|_1)$ the set $\{\pm \boldsymbol{e}_i:i\in[n]\}$ is an equilateral set of size $2n$. 
Thus $n+1$ is the natural lower bound for finite $p>1$, and $2n$ is the natural lower bound for $p=1$.

In his celebrated list of open problems, Guy recorded the following conjectures of Kusner~\cite{1983openProblem}, which is the most famous question pertaining the above results.
\begin{conj}[Kusner's conjecture]
    \(e(\ell_1^n)=2n\) and \(e(\ell_p^n)=n+1\) for all \(1<p<\infty.\)
\end{conj}
These questions have driven much of the subsequent work on equilateral sets in $(\mathbb{R}^{n},\|\cdot\|_p)$. The exact picture is currently known only in a few cases. 
The Euclidean case $p=2$ is classical. 
Swanepoel~\cite{2004ple2} proved the quartic case
\(
e(\ell_4^n)=n+1\) for all \(n\ge 1\) via polynomial methods, also see another proof in~\cite{2014Canada}. For $e(\ell_1^n)$, the conjectured value $2n$ is known only in small dimensions: it holds for $n\le 4$, with the cases $n=3$ and $n=4$ established by Bandelt, Chepoi and Laurent~\cite{1998DCG} and Koolen, Laurent and Schrijver~\cite{2000DCC}, respectively. 

A major turning point was Swanepoel's discovery that Kusner's conjecture is false throughout the whole range $1<p<2$ in sufficiently high dimension~\cite{2004ple2}. 
More precisely, for every fixed $1<p<2$, Swanepoel~\cite{2004ple2} constructed equilateral sets in $(\mathbb{R}^{n},\|\cdot\|_p)$ of cardinality at least $(1+\varepsilon_p)n$ for some $\varepsilon_p>0$. 
Thus the difficult part of Kusner's conjecture is not the sub-Euclidean regime $1<p<2$, which is now known to be false, but rather the super-Euclidean regime $p>2$.

As for upper bounds, the first nontrivial polynomial estimate goes back to Smyth~\cite{2001Smyth}, who proved
\(
e(\ell_p^n)\le C_{p}n^{\frac{p+1}{p-1}}
\)
for every fixed $1<p<\infty$, see Smyth's survey~\cite{2013Smyth}. 
This was substantially sharpened by Alon and Pudl\'ak~\cite{2003GAFA}, who established
\(
e(\ell_p^n)\le c_{p}n^{\frac{2p+2}{2p-1}}
\)
for every fixed \(p\ge 1\)
together with the stronger bound
\(
e(\ell_p^n)\le c_{p}n\log n
\)
for every odd integer $p\ge 1$. For even integers $p$, Swanepoel~\cite{2004ple2} obtained linear bounds:
\[
e(\ell_p^n)\le
\begin{cases}
\left(\frac{p}{2}-1\right)\cdot n+1, & p\equiv 0 \pmod 4,\\[1ex]
\frac{p}{2}\cdot n+1, & p\equiv 2 \pmod 4.
\end{cases}
\]
Later, Konyagin~\cite{2011Konyagin} gave a different approach which yields
\(
e(\ell_p^n)\le C_{p}n\log n\)
for all \(1<p<2.\)
This recovers the best currently known order of magnitude for $e(\ell_1^n)$ and extends the logarithmic bound from odd integers to the entire sub-Euclidean range. 
More recently, Chen, Gui, Tang, and Xiong~\cite{2022EUJC} obtained an improvement in the regime where $p$ is significantly larger than $n$.

Summarizing the current status, Kusner's conjecture for $1<p<\infty$ had previously exhibited a striking asymmetry. 
On the one hand, Swanepoel~\cite{2004ple2} showed that the conjecture fails for every fixed $1<p<2$ in sufficiently high dimension, while the exact simplex bound is known for the isolated cases $p=2$ and $p=4$. 
On the other hand, throughout the entire super Euclidean regime $p>2$, apart from these two exceptional points, essentially no exact general theorem was known. 
Thus, before the present work, even the most basic question, whether the equilateral dimension should remain linear in $n$, was open for every fixed $p>2$ except those even integers \(p.\)

Our main result changes this picture substantially. 
It shows that for a whole half of the parameters $p\ge 2$, the cardinality of an equilateral set is always bounded linearly in the dimension. 
\begin{theorem}\label{thm:KusnerConj}
Let $k$ and $n$ be integers with $k\ge 0$ and $n\ge 1$, and let $p\in[4k+2,4k+4]$.
If $A$ is an equilateral set in $(\mathbb R^n,\|\cdot\|_p)$, then
\[
|A|\le (2k+1)n+1.
\]
\end{theorem}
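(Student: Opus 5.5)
The plan is to compare inertia. The fixed Gram-type matrix $J-I$ is negative definite on a subspace of dimension $|A|-1$. Each coordinate contributes a matrix that is positive semidefinite on a subspace of small codimension, and counting codimensions then gives the bound.

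Let $A=\{\boldsymbol a_1,\dots,\boldsymbol a_m\}$ and rescale so that $\|\boldsymbol a_i-\boldsymbol a_j\|_p^p=1$ for $i\ne j$. For each coordinate $t\in[n]$ put $D^{(t)}_{ij}:=|a_{i,t}-a_{j,t}|^p$, so that
\[
\sum_{t=1}^n D^{(t)} = J-I .
\]
Let $W:=\{\boldsymbol c\in\mathbb R^m:\sum_i c_i=0\}$, which has dimension $m-1$. For $\boldsymbol c\in W$ we have $\boldsymbol c^{T}(J-I)\boldsymbol c=-\|\boldsymbol c\|_2^2$. Hence the quadratic form $\boldsymbol c\mapsto\sum_t\boldsymbol c^{T}D^{(t)}\boldsymbol c$ is negative definite on $W$.

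\textbf{Key lemma.} For reals $x_1,\dots,x_m$ and $p\in[4k+2,4k+4]$, the form $\boldsymbol c\mapsto\sum_{i,j}c_ic_j|x_i-x_j|^p$ is nonnegative on
\[
V(x)=\Big\{\boldsymbol c:\ \sum_i c_i x_i^r=0\ \text{for } r=0,1,\dots,2k+1\Big\}.
\]

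Granting the lemma, let $V:=\bigcap_t V(a_{\cdot,t})\subseteq W$. Each $V(a_{\cdot,t})$ is cut out by $2k+2$ linear conditions, one of which ($r=0$) is common to all $t$ and already defines $W$. So $V$ has codimension at most $(2k+1)n$ inside $W$. On $V$ the form $\sum_t\boldsymbol c^TD^{(t)}\boldsymbol c$ is both $\ge 0$ and negative definite, which forces $V=\{0\}$. Therefore $m-1\le (2k+1)n$, as claimed.

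\textbf{Proof of the lemma at the endpoints.} When $p\in\{4k+2,4k+4\}$, expand the polynomial $(x_i-x_j)^p$ binomially. Writing $\mu_r=\sum_i c_ix_i^r$, we get
\[
\boldsymbol c^TD\boldsymbol c=\sum_{r=0}^p\binom pr(-1)^r\mu_r\mu_{p-r}.
\]
On $V(x)$ we have $\mu_r=0$ for $r\le 2k+1$, so a term survives only if both $r$ and $p-r$ are at least $2k+2$.
\begin{itemize}
\item For $p=4k+2$ no such $r$ exists, so the form vanishes identically on $V(x)$.
\item For $p=4k+4$ only $r=2k+2$ survives, giving $\binom{p}{2k+2}\mu_{2k+2}^2\ge 0$.
\end{itemize}

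\textbf{Proof of the lemma for non-integer $p\in(4k+2,4k+4)$.} Here I will use that $(-1)^{\lceil p/2\rceil}|x|^p$ is conditionally positive definite of order $\lceil p/2\rceil$ on $\mathbb R$ (Micchelli's theorem). Since $\lceil p/2\rceil=2k+2$ is even, the sign is $+$, and this is exactly the lemma. To keep the paper self-contained, I would prove it from the integral representation
\[
|x|^p=C_p\int_0^\infty\Big((-1)^{2k+2}\big(\cos(sx)-T_{2k+1}(sx)\big)\Big)\frac{ds}{s^{1+p}},
\qquad C_p>0,
\]
where $T_{2k+1}$ is the Taylor polynomial of $\cos$ of degree $2(2k+1)$. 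This integral converges precisely for $4k+2<p<4k+4$. For $\boldsymbol c\in V(x)$, the Taylor part contributes nothing: each monomial of $(x_i-x_j)^{2l}$ with $l\le 2k+1$ expands into products $x_i^ax_j^b$ with $a+b\le 4k+2$, so $\min(a,b)\le 2k+1$ and the term is killed by the moment conditions. What remains is
\[
\sum_{i,j} c_ic_j\cos\big(s(x_i-x_j)\big)=\Big|\sum_i c_ie^{\ii sx_i}\Big|^2\ge 0.
\]

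\textbf{Main obstacle.} I expect the delicate step to be verifying that the constant $C_p$ is positive throughout $(4k+2,4k+4)$. This amounts to a sign computation involving $\Gamma(-p)\cos(\pi p/2)$. It is exactly the reason the argument works on these intervals and fails on $(4k,4k+2)$, where the sign flips and only the conditionally negative version is available.
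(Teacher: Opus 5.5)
Your proof is correct, and it takes a genuinely different and much shorter route than the paper's.

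Your key lemma is exactly the paper's Lemma~\ref{lem odd d}: positivity of $\sum_{i,j}c_ic_j|x_i-x_j|^p$ on $V_d$ with $d=2k+1$. You prove it with the same integral representation. The sign you flag as the main obstacle is settled there by $2d$ integrations by parts, which give $\int_0^\infty(\cos u-T_{2k+1}(u))u^{-1-p}\,du=\frac{(-1)^d}{p(p-1)\cdots(p-2d+1)}\int_0^\infty(\cos u-1)u^{-1-(p-2d)}\,du>0$, since $d$ is odd.

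The paper, however, uses this positivity only as the base case of a longer construction:
\begin{itemize}
\item a descending induction over the moment filtration $V_0\supseteq\cdots\supseteq V_d$ (Lemma~\ref{lem cnd}) builds an even polynomial correction that makes $-|x-y|^p$ conditionally negative definite on all of $\boldsymbol{1}^\perp$;
\item that polynomial is then replaced by a quadratic term plus cosine kernels $1-\cos(h\theta(x-y))$ whose coefficients alternate in sign, via the sign pattern of an inverse Vandermonde matrix;
\item finally it controls an $O(\theta^2)$ error, double-centers, and counts ranks $n+2kn$.
\end{itemize}

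You avoid all of this. Instead of lifting positivity from $V_d$ to $V_0$ one coordinate at a time, you intersect the $n$ coordinate copies of $V_d$ inside $\boldsymbol{1}^\perp$. This costs codimension $(2k+1)n$, and you then compare $\ge 0$ with $-\|\boldsymbol c\|_2^2$ directly.

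Besides brevity, your count gives a cleaner structural statement: the vectors $\bigl(1,(a_{i,t}^r)_{t\in[n],\,1\le r\le 2k+1}\bigr)$, $i\in[m]$, are linearly independent. For $k=0$ this is exactly the affine independence of Proposition~\ref{prop:equality-affine-independent}.

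The paper's heavier machinery does pay off elsewhere: Lemma~\ref{lem cnd} is reused in Lemma~\ref{lem:real-line-part-ple2}. And, as you observe, on $(4k,4k+2)$ the form is negative on $V_d$, so your contradiction disappears there.

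One small slip: your case split into endpoints and ``non-integer $p$'' skips $p=4k+3$. Your integral argument covers the whole open interval, however, as does Micchelli's theorem, which only needs $p\notin 2\mathbb N$, so nothing is lost.
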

Theorem~\ref{thm:KusnerConj} has two immediate consequences. First, it gives a
linear upper bound for all
\[
p\in [2,4]\cup[6,8]\cup[10,12]\cup\cdots,
\]
a set of exponents of density one half in \([2,\infty)\). Thus Kusner's
conjecture is confirmed up to a constant factor on a large subset of the
super Euclidean range. Second, in the first interval \(2\le p\le4\), the bound is
exactly \(n+1\), and hence Kusner's conjecture holds there for every dimension
\(n\). To the best of our knowledge, this is the first verification of the
conjectured simplex bound on a continuum of exponents.

On the complementary intervals \((4k,4k+2)\), the situation is more delicate. Our method no longer directly gives a linear bound, but we can still provide an upper bound that is optimal up to a logarithmic factor. In fact, the argument in this range is not merely a variation of the previous one: it requires a  different idea, together with several additional ingredients and a more refined construction.
\begin{theorem}\label{thm:log-upper-4k-4k+2}
Let $k$ and $n$ be integers with $k\ge 0$ and $n\ge 1$, and let $p\in(4k,4k+2)$ with $p\ge 1$. If
$A$ is an equilateral set in $(\mathbb R^n,\|\cdot\|_p)$, then
\[
|A|\le C_{p}n\log{(2n)}
\]
for some constant $C_{p}>0$ depending only on $p$.
\end{theorem}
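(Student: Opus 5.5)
The plan is to run a rank argument on the distance matrix, in the spirit of Alon--Pudl\'ak and Konyagin, with an Alon-type rank lemma supplying the $\log$ factor. It splits the kernel $|s-t|^p$ into three pieces: a coordinatewise polynomial part, which has rank $O_p(n)$; a part with a definite sign; and a small error.

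Let $A=\{\boldsymbol a_1,\dots,\boldsymbol a_m\}$ with $\|\boldsymbol a_i-\boldsymbol a_j\|_p=1$ for $i\ne j$. Then every coordinate difference satisfies $|a_{il}-a_{jl}|\le 1$, and
\[
\sum_{l=1}^n |a_{il}-a_{jl}|^p = 1-\delta_{ij},
\]
so $J-I=\sum_{l} K_l$ with $K_l=(|a_{il}-a_{jl}|^p)_{i,j}$. The basic observation, already behind Swanepoel's even-$p$ bounds and presumably behind Theorem~\ref{thm:KusnerConj}, is the following. For a polynomial $Q$ of degree $d$, the matrix $\sum_l (Q(a_{il}-a_{jl}))_{i,j}$ has rank at most $(d+1)n$, since $(s-t)^r$ expands into $r+1$ products $s^a t^{r-a}$.

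\textbf{The decomposition.} Write $p=4k+q$ with $q\in(0,2)$. The generalized Fourier transform of $|t|^p$ has sign $-\sin(\pi p/2)$ up to a positive factor. In the range $[4k+2,4k+4]$ this sign is $\ge 0$, which makes $|s-t|^p$ conditionally positive definite modulo polynomials of degree $\le 2k$; that is what yields the linear bound there. For $p\in(4k,4k+2)$ the sign is wrong, so I would not use $|t|^p$ directly. Instead I would write, on $[-1,1]$,
\[
|t|^p = Q_\varepsilon(t) + N_\varepsilon(t) + E_\varepsilon(t).
\]
Here $Q_\varepsilon$ is an even polynomial of degree $O_p(\log(1/\varepsilon))$, and its coefficients should be bounded so that only $O_p(1)$ of them matter. $N_\varepsilon$ is a kernel whose Fourier transform has a fixed sign, for instance obtained by adding a suitable multiple of a Gaussian-smoothed version of $|t|^{p+2}$. $E_\varepsilon$ is uniformly at most $\varepsilon$ on $[-1,1]$.

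Summing over coordinates then gives
\[
J-I = P + S + R,
\]
where $\operatorname{rank}P\le C_p n\log(1/\varepsilon)$, $S$ is negative semidefinite, and $R$ has entries $O(n\varepsilon)$. A cleaner alternative, if it can be arranged, is to bound the degree of $Q_\varepsilon$ by a constant, so that $\operatorname{rank}P=O_p(n)$.

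\textbf{The rank step.} Restrict to the subspace $V$ orthogonal to the column space of $P$ and to $\mathbf 1$; its codimension is $O_p(n\log(1/\varepsilon))$. On $V$ we have $-I = S + R$, hence $I+R$ is positive semidefinite on $V$ up to the sign information carried by $S$. I would then apply Alon's lemma: a symmetric matrix with diagonal entries $1$ and off-diagonal entries at most $\eta$ in absolute value has rank at least $c\,\log m/(\eta^2\log(1/\eta))$, or at least $m/2$ when $\eta\le m^{-1/2}$. Combined with the trace inequality $\operatorname{rank}M\ge (\operatorname{tr}M)^2/\operatorname{tr}(M^2)$, this should force $m\le C_p n\log(1/\varepsilon)+O(1)$ once $\varepsilon\approx 1/(n\,\mathrm{poly}(m))$. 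Since $m\le \mathrm{poly}(n)$ by Alon--Pudl\'ak, this gives $O_p(n\log(2n))$.

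\textbf{Main obstacle.} The hard step is the analytic decomposition. The positive-definite part must have the correct sign for every $q\in(0,2)$, the polynomial part must have degree only $O(\log(1/\varepsilon))$, and the error must be uniformly small on $[-1,1]$. The singularity of $|t|^p$ at $0$ is what prevents a bounded-degree polynomial from working. My first attempt would be Jackson-type approximation of $|t|^{q}$ by polynomials after factoring out $t^{4k}$, with the smoothing chosen so that the remaining Fourier sign is controlled; I would verify the sign numerically for the endpoints $q\to 0,2$.
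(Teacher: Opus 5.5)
There is a genuine gap, and it goes beyond the technical difficulty you flag: the rank step cannot close for either sign of $N_\varepsilon$.

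\textbf{The sign you wrote ($S\preceq 0$) gives no contradiction.} This is the natural sign for $p\in(4k,4k+2)$: by \eqref{eq Qp} with $d=2k$, $|t|^p$ is conditionally negative on vectors whose moments up to order $2k$ vanish. But then $-I=S+R$ on $V$ carries no information. $S$ may have full rank and simply be $\approx -I$ on $V$, and for an equilateral set it essentially is; this is exactly what the identity $R=\tfrac12C_m+X$ of Lemma~\ref{lem:centered-gram-log} expresses. Alon's lemma needs a \emph{low-rank} matrix that is close to the identity. The only low-rank matrix in your setup, $P$, has already been projected away.

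\textbf{The favorable sign would suffice, but the decomposition does not exist.} If $N_\varepsilon$ had the favorable sign, $\|R\|_{\mathrm{OP}}<1$ alone would finish. However, no such decomposition exists with $\deg Q_\varepsilon=O(\log(1/\varepsilon))$. Let $D:=\max\{\deg Q_\varepsilon,2k\}$, and take nodes $x_j=-\tfrac12+\tfrac{j}{D+1}$ with weights $\lambda_j=2^{-D-1}(-1)^j\binom{D+1}{j}$ for $0\le j\le D+1$. Then:
\begin{itemize}
\item $\|\boldsymbol\lambda\|_1=1$ and all moments of order $\le D$ vanish;
\item the polynomial part contributes $0$, and $E_\varepsilon$ contributes at most $\varepsilon$;
\item by \eqref{eq Qp}, for some $\kappa_p>0$,
\[
\sum_{i,j}\lambda_i\lambda_j|x_i-x_j|^p=-\kappa_p\int_0^\infty\Bigl|\sin\frac{t}{2(D+1)}\Bigr|^{2D+2}\frac{dt}{t^{1+p}}\le -c_p'D^{-p-1/2}.
\]
\end{itemize}
Nonnegativity of the $N_\varepsilon$-part therefore forces $\varepsilon\ge c_p'D^{-p-1/2}$, that is, $\deg Q_\varepsilon\gtrsim\varepsilon^{-1/(p+1/2)}$. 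Jackson-type approximation of $|t|^q$ is likewise polynomial, not logarithmic, in $1/\varepsilon$. Your uniform error accumulates to $n\varepsilon$ in every entry, so you need at least $\varepsilon\lesssim 1/n$. The method then yields at best $m=O(n^{1+1/(p+1/2)})$, a polynomial bound of Alon--Pudl\'ak type rather than $n\log n$.

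\textbf{What is missing is a way to control a sign-definite part of unbounded rank.} The paper keeps the unfavorable sign exactly and then shows the resulting Gram matrix is close to low rank in trace.
\begin{itemize}
\item $2k$-fold zero-anchored primitives turn $|t|^p$ into the conditionally negative definite kernel $|t|^\beta$ with $\beta=p-4k$, up to a boundary correction of rank $\le 4k$ per coordinate. After centering this gives $R=\tfrac12C_m+X$, where $R$ is a Hilbert-space Gram matrix and $\operatorname{rank}X\le 4kn$.
\item The embedding is self-similar: $\Gamma_0(ax)=T_a\Gamma_0(x)$ with $\|T_a\xi\|=a^{p/2}\|\xi\|$. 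Compactness on the shell $\tfrac12\le|x|\le1$, followed by $L$ rescalings, gives an $O(L)$-dimensional subspace within squared distance $2^p\delta^2|x|^p+C2^{-Lp}$ of $\Gamma_0(x)$.
\item This error is proportional to $|x|^p$ rather than uniform. Anchoring $\boldsymbol x_1=\boldsymbol 0$ gives $\sum_t|x_{i,t}|^p\le 1$, so the total residual per point is at most $\tfrac14$ once $L=O(\log n)$.
\item Proposition~\ref{prop:projection-logn} then gives $m\le 2\bigl(4kn+O(nL)+2\bigr)$.
\end{itemize}
The logarithm counts dyadic scales; it does not come from Alon's lemma.
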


\subsection{Largest equilateral sets on the torus}
We next turn to the analogous problem on the torus, recently studied by Alon~\cite{2026AlonTorus}. 
One motivation for this toroidal version comes from
the work of Bilyk, Nagel, and Ruohoniemi on tensor product energies on
\(\mathbb T^n\), which arise naturally in discrepancy theory and
quasi-Monte Carlo integration \cite{BilykNagelRuohoniemi}.  
A key role in their work is played by Latin hypercube configurations with a
single distance vector.  In finite dimension, the single-distance-vector
property is equivalent to the same point set being equilateral with respect to
toroidal \(\ell_p\)-metric \(d_p\), \(1\le p\le\infty\).  Thus, for a
fixed \(p\), the toroidal equilateral problem can be viewed as a natural
relaxation of this more structured setting.
Let
\(
\T:=\R/\Z,
\)
which we identify with the interval $[0,1)$ endowed with the cyclic distance
\[
d_{\T}(u,v):=\min\{|u-v|,\ 1-|u-v|\}.
\]
For $\boldsymbol{x}=(x_1,\dots,x_n),\boldsymbol{y}=(y_1,\dots,y_n)\in \T^n$ and $1\le p<\infty$, define
\[
d_p(\boldsymbol{x},\boldsymbol{y})
:=
\left(\sum_{i=1}^n d_{\T}(x_i,y_i)^p\right)^{1/p}
=
\left(\sum_{i=1}^n \min\{|x_i-y_i|,\ 1-|x_i-y_i|\}^p\right)^{1/p},
\]
and for $p=\infty$, define
\(
d_{\infty}(\boldsymbol{x},\boldsymbol{y})
:=
\max_{1\le i\le n} d_{\T}(x_i,y_i).
\)
Thus $(\T^n,d_p)$ is the $n$-dimensional torus equipped with the $\ell_p$-product metric.
Similarly, we write
\(
e_p(\T^n):=e\bigl((\T^n,d_p)\bigr).
\)

While equilateral sets in normed spaces have been studied for decades, the
corresponding problem on the torus is much more recent. Alon~\cite{2026AlonTorus}
initiated the systematic study of the numbers \(e_p(\T^n)\) and proved a general
polynomial upper bound together with natural linear lower bounds.
\begin{theorem}[Alon~\cite{2026AlonTorus}]
\label{thm:Alon_torus_lp}
For every fixed real $p\ge 1$, there exists a constant $c(p)>0$ such that for every integer $n\ge 1$,
\[
2n\le e_p(\T^n)\le c(p)n^{2+2/\lfloor p\rfloor}.
\]
Moreover, if $2n+1$ is a prime, then
\[
e_p(\T^n)\ge 2n+1.
\]
\end{theorem}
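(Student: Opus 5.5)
The plan has two independent parts: an explicit construction for each of the two lower bounds, and a low-rank (approximate rank) argument for the upper bound.

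\textbf{Lower bounds.} For $e_p(\T^n)\ge 2n$ I will use a toroidal cross-polytope. Fix $a\in(0,1/2)$, to be chosen, and take the $2n$ points $\pm a\boldsymbol{e}_i$, $i\in[n]$, read modulo $1$. There are two kinds of pairs.
\begin{itemize}
\item Two points on different axes are at distance $d_p=(a^p+a^p)^{1/p}=2^{1/p}a$.
\item The two points $\pm a\boldsymbol{e}_i$ on the same axis are at distance $d_{\T}(a,-a)=\min\{2a,1-2a\}$.
\end{itemize}
I choose $a=1/(2+2^{1/p})$, so that $1-2a=2^{1/p}a$. Since $2^{1/p}\le 2$, we have $a\ge 1/4$, hence $2a\ge 1-2a$ and the minimum equals $1-2a$. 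Thus all distances are equal to $2^{1/p}a$, for every $p\ge1$.

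For $m:=2n+1$ prime I take the Latin-hypercube points $\boldsymbol{x}_j=(j/m,2j/m,\dots,nj/m)$ for $j\in\Z_m$. Fix $d=j-k\ne 0$. Because $m$ is prime, multiplication by $d$ permutes $\Z_m^*$. The residues $\pm id$ for $i\in[n]$ therefore exhaust $\Z_m^*$. Consequently the multiset of cyclic distances $\{d_{\T}(id/m,0)\}_{i\in[n]}$ is exactly $\{1/m,\dots,n/m\}$. Hence every pairwise distance equals $\bigl(\sum_{s=1}^n (s/m)^p\bigr)^{1/p}$, independent of $d$.

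\textbf{Upper bound.} Let $A=\{\boldsymbol{x}_1,\dots,\boldsymbol{x}_m\}$ be equilateral with common distance $r$. Write $f(t)=d_{\T}(t,0)^p$, a $1$-periodic function. Consider the matrix
\[
M_{jk}=r^{-p}\sum_{i=1}^n f(x_{j,i}-x_{k,i}),
\]
which equals $J-I$ exactly.
\begin{enumerate}
\item Replace $f$ by its Fej\'er or Jackson trigonometric approximant $f_D$ of degree $D$. Each term $f_D(x_{j,i}-x_{k,i})$ is a sum of $2D+1$ products $e(\ell x_{j,i})e(-\ell x_{k,i})$, so the approximating matrix has rank at most $n(2D+1)$.
\item Bound the approximation error. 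The function $f$ is $C^{\lfloor p\rfloor-1}$ with a bounded-variation $\lfloor p\rfloor$-th derivative; its singularities are at $0$ and at $1/2$. Jackson's theorem then gives $\|f-f_D\|_\infty\lesssim_p D^{-\lfloor p\rfloor}$. The entrywise error is therefore $\lesssim n D^{-\lfloor p\rfloor}r^{-p}$.
\item Subtract the rank-one matrix $J$. This leaves a matrix of rank at most $n(2D+1)+1$ that is within $\varepsilon$ of $-I$ entrywise.
\item Invoke Alon's approximate-rank lemma: a symmetric matrix with diagonal entries of modulus about $1$ and off-diagonal entries at most $\varepsilon$ has rank at least $m/(1+m\varepsilon^2)$. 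Choosing $\varepsilon\approx m^{-1/2}$ gives $m\lesssim nD$.
\end{enumerate}

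\textbf{Main obstacle.} The hardest step is controlling $r$, since the error carries the factor $r^{-p}$. I would split into two regimes.
\begin{itemize}
\item If $r$ is bounded below, say $r^p\gtrsim n^{1-\delta}$ in the normalization where $r^p$ can be as large as order $n$, then $D\approx (n\sqrt m)^{1/\lfloor p\rfloor}$ suffices. Solving $m\lesssim n^{1+1/\lfloor p\rfloor}m^{1/(2\lfloor p\rfloor)}$ gives a bound of the form $n^{2+2/\lfloor p\rfloor}$ after accounting for the worst-case $r$.
\item If $r$ is small, then all coordinate differences are small. I would lift locally to $\R^n$, where the cyclic distance coincides with $|\cdot|$, and apply the Euclidean-space bound of Alon and Pudl\'ak for $\ell_p^n$. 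That bound is even stronger, but making the lift consistent across all pairs (not only nearby ones) is delicate.
\end{itemize}
I would first try a pigeonhole argument on a grid of cubes of side $<1/4$, which places a positive fraction of $A$ inside a single liftable region. The remaining work is to balance $D$, $\varepsilon$ and $r$ optimally so as to land exactly on the exponent $2+2/\lfloor p\rfloor$.
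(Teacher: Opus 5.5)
\textbf{Lower bounds.} Your lower-bound constructions are correct. With $a=1/(2+2^{1/p})$ the points $\pm a\boldsymbol e_i$ are equilateral for every $p\ge1$. Your cyclic construction for prime $2n+1$ is exactly the case $m=1$ of Theorem~\ref{thm:odd-prime-power-lower-bound}. Note that the paper only cites this statement from Alon; its own upper bounds in Theorem~\ref{thm:torus-nlogn-ple2} are stronger and imply the one here.

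\textbf{Upper bound: Step~2 fails.} The function $f(t)=d_{\T}(t,0)^p$ is not $C^{\lfloor p\rfloor-1}$. At the antipodal point it switches from $t^p$ to $(1-t)^p$, so $f'(\tfrac12^-)=p2^{1-p}$ while $f'(\tfrac12^+)=-p2^{1-p}$. This corner is present for every $p$; it is exactly the wrap-around obstruction the paper isolates in its toroidal section. Consequently the Fourier coefficients of $f$ decay only like $\ell^{-2}$. No degree-$D$ trigonometric polynomial approximates $f$ uniformly better than order $1/D$, so Jackson gives $1/D$, not $D^{-\lfloor p\rfloor}$. Fej\'er means saturate at $1/D$ even for smooth $f$.

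With the correct rate and $r$ bounded below, your Steps~3--4 force $D\approx n\sqrt m$. This gives only $m\lesssim n^2\sqrt m$, i.e.\ $m\lesssim n^4$. That is the claimed bound for $1\le p<2$, but not the exponent $2+2/\lfloor p\rfloor<4$ for $p\ge2$. Separately, even granting your rate, $m\lesssim n^{1+1/k}m^{1/(2k)}$ solves to $m\lesssim n^{(2k+2)/(2k-1)}$, not $n^{2+2/k}$. So the final balancing was never actually carried out.

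\textbf{How to get past $n^4$.} You have to treat the corner separately rather than through a single sup-norm bound. One possible route: use a localized approximation scheme. A coordinate whose difference is near $\tfrac12$ contributes about $2^{-p}$ to $r^p$. Hence only $O(2^pr^p)$ coordinates of any pair feel the corner error, and after division by $r^p$ that error is $O(1/D)$ uniformly in $n$. The paper instead writes $\rho(x-y)^p=|x-y|^p-W_p(x,y)$. It handles $|x-y|^p$ with the real-line machinery, and splits the one-sided corner kernel $W_p$ into a low-rank part plus an error of small positive trace (Proposition~\ref{prop:strong-torus-corner}).

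\textbf{Small-$r$ regime.} Your pigeonhole over cubes would in any case capture only about a $4^{-n}$ fraction of $A$, since there are exponentially many cells. It is also unnecessary. In every coordinate $d_{\T}(x_i,y_i)\le d_p(\boldsymbol x,\boldsymbol y)=r$. So for $r<1/3$, anchoring one point at $(\tfrac12,\dots,\tfrac12)$ puts all of $A$ in $(1/6,5/6)^n$, where cyclic and real differences agree for every pair. Rescaling in $\R^n$ and re-embedding then lets you assume $r\ge1/4$ throughout (Lemma~\ref{lem:small-distance-transfer}). No appeal to Alon--Pudl\'ak is needed.
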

In the same work, Alon also determined the exact value in the extremal case $p=\infty$, proving that
\(
e_{\infty}(\T^n)=3^n.
\)
For finite $p$, however, the correct order of magnitude remained widely open. 
The lower bounds above strongly suggest that $e_p(\T^n)$ should be linear in $n$, and Alon explicitly conjectured that this is indeed the case, in particular that
\(
e_1(\T^n)=\Theta(n)
\)
for all sufficiently large $n$.

Our next result gives the upper bounds that substantially improve Alon's
general polynomial estimate for all finite \(p\ge 1\).

\begin{theorem}\label{thm:torus-nlogn-ple2}
Let \(p\ge 1\). Then there exists a constant \(C_p>0\) such that for every
integer \(n\ge1\),
\[
e_p(\T^n)\le
\begin{cases}
C_p n\log(2n),&1\le p\le 2,\\[1mm]
C_p n^{\frac{3}{2}-\frac{1}{p}},&p>2.
\end{cases}
\]
\end{theorem}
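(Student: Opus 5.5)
We follow the Alon--Pudl\'ak rank-method template: produce an $m\times m$ matrix with large diagonal and small off-diagonal entries whose rank is small, and then apply the standard rank lemma. For a symmetric real matrix $M$ with diagonal entries $1$ and off-diagonal entries of absolute value at most $\varepsilon$, we have
\[
\operatorname{rank}(M)\ \ge\ \frac{m}{1+(m-1)\varepsilon^2},
\]
and its Alon-type refinement states that if $\varepsilon\ge m^{-1/2}$, then the rank of a low-degree polynomial entrywise power of $M$ is at least of order $\frac{\log m}{\varepsilon^2\log(1/\varepsilon)}$.

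\textbf{Step 1: Fourier expansion of the cyclic kernel.} Let $A=\{\boldsymbol a^{(1)},\dots,\boldsymbol a^{(m)}\}$ be equilateral with common distance $d$, so that $\sum_i f(a^{(s)}_i-a^{(t)}_i)=d^p$ for all $s\ne t$, where $f(x)=d_{\T}(x,0)^p$ on $\T$. The function $f$ is even and continuous, and its Fourier coefficients satisfy $|\hat f(j)|\lesssim_p j^{-2}$, because of the kink at $x=0$ when $p=1$ and the kink at $x=1/2$ for all $p$. We approximate $f$ by a trigonometric polynomial $f_N(x)=\sum_{|j|\le N}\hat f(j)e(jx)$ with uniform error at most $C_p/N$.

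\textbf{Step 2: a low-rank matrix.} Define $M_{st}=\sum_i f_N(a^{(s)}_i-a^{(t)}_i)$. This matrix is a sum of $n(2N+1)$ rank-one matrices $e(ja_i^{(s)})\overline{e(ja_i^{(t)})}$, so its rank is at most $n(2N+1)$. Its diagonal entries are $n\hat f(0)+O(n/N)$ and its off-diagonal entries are $d^p+O(n/N)$. Subtracting a suitable multiple of the all-ones matrix (which changes the rank by at most one) leaves diagonal entries $D:=n\hat f(0)-d^p+O(n/N)$ and off-diagonal entries of size $O(n/N)$. The crucial point is a lower bound $n\hat f(0)-d^p\ge c_p n$. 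This does not follow from equilateral-ness alone, since $d$ can be as large as the typical distance. We handle it by averaging: summing the identity over all pairs gives
\[
\binom m2 d^p=\sum_i\sum_{s<t}f(a^{(s)}_i-a^{(t)}_i),
\]
and the Fourier positivity of the pair sum (where nonnegative) or an alternative argument controls this. If instead $d^p$ is close to $n\hat f(0)$, we use a different normalization: we replace $f$ by $g=\mathrm{const}-f$, whose nonzero Fourier coefficients have a definite sign for suitable $p$.

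\textbf{Step 3: optimization.} With relative error $\varepsilon\asymp 1/N$, the basic rank lemma gives $m\lesssim nN$ provided $m\varepsilon^2\lesssim 1$. The logarithmic version (applied to the entrywise power in Alon's lemma) gives $nN\gtrsim \frac{\log m}{\varepsilon^2\log(1/\varepsilon)}$. Choosing $N\asymp \log$ factors then yields $m=O(n\log n)$ for $1\le p\le 2$, where the coefficient decay is $j^{-1-p}$ or at least $j^{-2}$ and the tail sum is well behaved. For $p>2$, the kink at $1/2$ only gives $|\hat f(j)|\sim j^{-2}$ with a coefficient that grows like $p2^{-p}$. Here we instead truncate at the scale $N\asymp n^{1/2-1/p}$, using the $\ell_p$-concentration of distances (most coordinates of a difference are small) to bound the error contributed by the coordinates near $1/2$. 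This trade-off gives $m\lesssim nN=n^{3/2-1/p}$.

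\textbf{Main obstacle.} We expect the main obstacle to be the diagonal-dominance issue in Step 2: showing that $n\hat f(0)-d^p$ is of order $n$, or otherwise extracting a positive-definite gap. For $p>2$ we also need the bookkeeping that justifies the exponent $3/2-1/p$. Our first attempt will be to pass to a sub-configuration where a constant fraction of coordinates are well separated, obtained by a pigeonhole argument on $d^p/n$.
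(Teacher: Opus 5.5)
Your Step~2 rests on a miscomputation that hides the real difficulty. The diagonal entries of $M$ are $\sum_i f_N(0)=nf_N(0)=nf(0)+O(n/N)=O(n/N)$, because $f(0)=d_{\T}(0,0)^p=0$. They are not $n\hat f(0)+O(n/N)$. Subtracting a multiple of $J$ shifts diagonal and off-diagonal entries equally, so in every normalization the diagonal differs from the off-diagonal entries by $-d^p+O(n/N)$. The only signal is the common distance $d^p$, and the quantity $n\hat f(0)-d^p$ you set out to bound plays no role.

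Nothing makes $d^p$ of order $n$. The small-distance reduction (Lemma~\ref{lem:small-distance-transfer}) only lets you assume $d\ge 1/4$, and rescaled $\ell_p^n$ simplices embedded in $\T^n$ show that $d^p\asymp 1$ genuinely occurs. Your pigeonhole plan cannot change this, since every subset of $A$ is equilateral with the same $d$ in the same $n$ coordinates.

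With a uniform truncation the accumulated error is $O(n/N)$ against a signal $d^p$, so the relative off-diagonal size is $\varepsilon\asymp n/(Nd^p)$. The entrywise rank lemma, even in Alon's logarithmic form, only gives a contradiction once $\varepsilon^{-2}$ (up to logarithms) exceeds the rank budget $\asymp nN$. Even in the most favorable case $d^p\asymp n$, this forces $N$ to be of order $n$ up to logarithmic factors and returns only $m\lesssim nN\approx n^2$. With $N$ polylogarithmic the lemma gives nothing. So Step~3 does not produce $n\log n$, and the truncation $N\asymp n^{1/2-1/p}$ for $p>2$ has no mechanism behind it.

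Replacing the entrywise lemma by a trace comparison does not rescue a uniform truncation either. Each coordinate then costs rank up to $2N+1$ and trace-norm error of order $m/N$. Beating $md^p$ with $d^p\asymp 1$ again needs $N\asymp n$, i.e.\ total rank $\asymp n^2$.

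The missing idea is an approximation whose error in each coordinate is proportional to the $p$-mass of that coordinate, combined with a positive-trace (rather than entrywise) rank comparison. The paper translates one point to $\boldsymbol 0$ and uses the anchored kernel $B_p(x,y)=\frac12(\rho(x)^p+\rho(y)^p-\rho(x-y)^p)$, so that $\sum_tB_t=\frac{\lambda^p}{2}(I_m+J_m)$ exactly. Proposition~\ref{prop:one-dim-ple2} then gives $B_t\preceq L_t+R_t$ with $\operatorname{rank}L_t\le C(m^{\vartheta_p}+\log(2m)+\log(2N))$ and $\operatorname{tr}_+(R_t)\le\varepsilon\sum_i\rho(x_{i,t})^p+Cm/N$. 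Because of the anchoring, $\sum_t\rho(x_{i,t})^p=\lambda^p$. So with $N\asymp n$ the total positive trace is at most $2\varepsilon m\lambda^p$, while each coordinate costs only $O(\log(mn))$ rank when $p\le 2$. Restricting to $\ker\sum_tL_t$ then gives $m\le 2\operatorname{rank}\sum_tL_t$.

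For $p\in(4q,4q+2)$, the logarithmic rank of the real-line part comes from the exact dilation structure $\Gamma_0(ax)=T_a\Gamma_0(x)$ of Proposition~\ref{prop:zero-anchored-exact-bridge-log}. This lets one finite-dimensional approximation on a single dyadic shell be transported to all $O(\log N)$ shells with error $\le\varepsilon|x|^p$; for the other $p$ the real-line part even has bounded rank. The wrap-around corner at $1/2$, which you treat only through the $j^{-2}$ decay, is isolated as one-sided kernels $\rho_p((v-u)_+)$ and split into low rank plus small trace norm (Lemma~\ref{lem:rectangular-corner-rank}). For $p>2$ the optimal choice of scales there costs rank $m^{(p-2)/(3p-2)}$, and the bootstrap $m\le Cn\,m^{\vartheta_p}$ is what yields the exponent $3/2-1/p$.
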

In particular, for \(1\le p\le2\), this brings the toroidal analogue of
Kusner-type problems to within a logarithmic factor of the conjectured linear
behavior.  For \(p>2\), it gives a substantially smaller exponent than Alon's
bound \(O_p(n^{2+2/\lfloor p\rfloor})\).

We also give two constructions which strengthen the known lower bounds.  The
first extends Alon's construction from prime values of \(2n+1\) to odd prime
powers.

\begin{theorem}\label{thm:odd-prime-power-lower-bound}
Let $q=r^{m}$ be an odd prime power
for some odd prime $r$ and some integer $m\ge 1$.
If
\(
n=\frac{q-1}{2},
\)
then there exists a set \(X\subseteq \mathbb T^n\) of cardinality
\(2n+1\)
such that \(X\) is equilateral in \((\mathbb T^n,d_p)\) for every
\(1\le p\le \infty\). 
\end{theorem}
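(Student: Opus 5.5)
Our plan generalizes Alon's prime construction by replacing $\mathbb Z_r$ with the additive group of $\mathbb F_q$ and using a quadratic-residue (Paley-type) structure. Let $\chi$ denote the quadratic character of $\mathbb F_q^*$. Since $q$ is odd, $\mathbb F_q^*$ is cyclic of even order $q-1=2n$, so the squares $S$ form a subgroup of order $n$. Fix coset representatives and write $S=\{s_1,\dots,s_n\}$. The aim is a set $X=\{\boldsymbol{x}_a: a\in\mathbb F_q\}$ of $q=2n+1$ points, with $\boldsymbol{x}_a=(\phi(s_1a),\dots,\phi(s_na))\in\mathbb T^n$ for a suitable map $\phi:\mathbb F_q\to\mathbb T$. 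The plan is to show that for every $a\ne b$ the multiset of coordinate distances $\{d_{\mathbb T}(\phi(s_ia),\phi(s_ib))\}_i$ does not depend on the pair. Then $d_p(\boldsymbol{x}_a,\boldsymbol{x}_b)$ is constant for every $1\le p\le\infty$ at once.

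First, choose $\phi$ so that $d_{\mathbb T}(\phi(u),\phi(v))$ depends only on $u-v$; call this value $f(u-v)$. Then the $i$-th coordinate distance is $f(s_i(a-b))$. Set $c=a-b\ne0$. As $i$ varies, $s_ic$ runs over the coset $cS$, which is either $S$ or the set $N$ of non-squares. So the multiset of distances is $\{f(s):s\in S\}$ or $\{f(t):t\in N\}$.

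Two conditions are needed. The first is that $f(-c)=f(c)$, which is automatic for a symmetric distance. The second is that the multisets $f(S)$ and $f(N)$ coincide, or at least have equal $p$-th power sums for every $p$; for $p=\infty$ this amounts to equal maxima. The simplest way to get the second condition is to have $f$ invariant under multiplication by some non-square $\eta$. If $-1$ is a non-square, i.e.\ $q\equiv3\pmod4$, take $\eta=-1$ and everything is immediate.

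In general we must work with the additive structure. $\mathbb F_q\cong\mathbb Z_r^m$ is not cyclic, so a homomorphism $\phi:\mathbb F_q\to\mathbb T$ is a character $u\mapsto \mathrm{Tr}(\lambda u)/r \bmod 1$ with image $\frac1r\mathbb Z/\mathbb Z$. This $\phi$ is additive, so $d_{\mathbb T}(\phi(u),\phi(v))=f(u-v)$ with $f(c)=\|\mathrm{Tr}(\lambda c)/r\|$. That settles the first condition. For the second, compare the value distributions of $\mathrm{Tr}(\lambda c)$ over $c\in S$ and over $c\in N$. The count of $c\in S$ with $\mathrm{Tr}(\lambda c)=t$ equals $\frac12\bigl(\#\{c\ne0:\mathrm{Tr}(\lambda c)=t\}+\sum_{\mathrm{Tr}(\lambda c)=t}\chi(c)\bigr)$. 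The character sum can be evaluated through Gauss sums, and it is generally nonzero. It is, however, the same for $t$ and $-t$ exactly when $\chi(-1)=1$, and it is antisymmetric in the relevant sense when $\chi(-1)=-1$.

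The main obstacle is this second step: guaranteeing that the two value distributions, after folding by $t\mapsto\pm t$ (since $\|\cdot\|$ identifies $t$ with $-t$), agree. First try using the Galois structure when $m\ge2$. Since $\chi$ restricted to $\mathbb F_r^*$ is trivial for even $m$, every element of $\mathbb F_r^*$ is a square. Use this scaling: $\mathrm{Tr}(\lambda sc)=s\,\mathrm{Tr}(\lambda c)$ for $s\in\mathbb F_r^*\subseteq S$. The distribution of $\mathrm{Tr}(\lambda c)$ on each coset is then uniform over $\mathbb F_r^*$ on the set where the trace is nonzero. This reduces the comparison to the single number $\#\{c\in S:\mathrm{Tr}(\lambda c)=0\}$ versus the same count for $N$.

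Because $|S|=|N|$ and both cosets are uniform over nonzero traces, we need these zero counts equal. That holds if and only if $\sum_{\mathrm{Tr}(\lambda c)=0}\chi(c)=0$, i.e.\ $\chi$ sums to zero on the hyperplane $\ker(\mathrm{Tr}(\lambda\,\cdot))$. We would verify this via the Gauss-sum identity $\sum_{\psi}\sum_c\chi(c)\psi(c)$. If it fails, we would instead choose $\phi$ to be a non-homomorphic map, namely an $\mathbb F_r$-linear surjection to $\mathbb F_r$ composed with the embedding $\mathbb F_r\hookrightarrow\frac1r\mathbb Z/\mathbb Z$, restricted to a hyperplane on which the quadratic character is balanced. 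For odd $m$, $\chi(-1)=\chi_r(-1)$ and the Galois argument adapts by using $\mathbb F_r^*\cap N\neq\emptyset$ to swap $S$ and $N$ directly.
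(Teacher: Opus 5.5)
\paragraph{Gap: indexing by the squares.} Your construction is correct only when $q\equiv 3\pmod 4$. The step you flag as the main obstacle is not merely unverified: it is false whenever $q\equiv 1\pmod 4$.

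The culprit is indexing the coordinates by the squares $S$. When $\chi(-1)=1$ one has $S=-S$. So the functionals $u\mapsto\mathrm{Tr}(\lambda s u)$, $s\in S$, hit only half of the classes $\{\pm\mu\}$ of nonzero $\mathbb{F}_r$-linear functionals, each twice. As a result, the folded trace distributions on $\lambda S$ and $\lambda N$ genuinely differ.

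\begin{itemize}
\item \emph{Prime $q\equiv 1\pmod 4$.} Already for $q=5$ your points are $\boldsymbol{x}_a=(\lambda a/5,\,4\lambda a/5)$. For $\lambda=1$ the pairs $\{\boldsymbol{x}_0,\boldsymbol{x}_1\}$ and $\{\boldsymbol{x}_0,\boldsymbol{x}_2\}$ have coordinate distances $(1/5,1/5)$ and $(2/5,2/5)$; other $\lambda$ only swap the roles. So for such primes you do not even recover Alon's construction, which uses the multipliers $1,\dots,n$.
\item \emph{Even $m$.} Your reduction to the two zero-trace counts is right, but the counts differ. In $\mathbb{F}_9=\mathbb{F}_3[i]$ one has $S=\{\pm1,\pm i\}$, $N=\{\pm(1\pm i)\}$ and $\mathrm{Tr}(a+bi)=2a$, so two squares and no non-squares have trace $0$. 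In general
\[
\sum_{\mathrm{Tr}(c)=x}\chi(c)=\frac{G(\chi)}{r}\sum_{s\in\mathbb{F}_r^*}\chi(s)e^{-2\pi \ii sx/r}.
\]
At $x=0$ this equals $\frac{r-1}{r}G(\chi)\neq 0$, because $\chi|_{\mathbb{F}_r^*}$ is trivial and $|G(\chi)|=\sqrt q$.
\item \emph{Odd $m$ with $r\equiv 1\pmod 4$.} Multiplying by a non-square $t\in\mathbb{F}_r^*$ does map $S$ onto $N$, but it also multiplies every trace value by $t$. The folding $x\sim -x$ does not absorb this, because the trace distribution on $S$ is not uniform on $\mathbb{F}_r^*$. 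Indeed the sum above equals $\frac{G(\chi)g_r}{r}\chi_r(-x)\neq 0$ for $x\neq 0$, and $\chi_r(-x)=\chi_r(x)$ here.
\item \emph{The fallback.} It is not a coherent alternative. An $\mathbb{F}_r$-linear surjection $\mathbb{F}_q\to\mathbb{F}_r$ is exactly $u\mapsto\mathrm{Tr}(\lambda u)$, hence a homomorphism. And $\phi$ cannot be restricted to a hyperplane, since the arguments $s_ia$ range over all of $\mathbb{F}_q$.
\end{itemize}

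\paragraph{The missing idea.} No multiplicative structure is needed. Replace $S$ by an arbitrary set $T$ of representatives of $\mathbb{F}_q^*/\{\pm1\}$. When $q\equiv 3\pmod 4$, $S$ is such a set, which is why that case worked.

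For $c\neq 0$, $\lambda cT$ is again a set of representatives. The circular distance of $x/r$ from $0$ is invariant under $x\mapsto -x$. So the multiset of coordinate distances is the folded distribution of the surjective $\mathbb{F}_r$-linear map $\mathrm{Tr}$ on $\mathbb{F}_q^*/\{\pm1\}$:
\begin{itemize}
\item distance $0$ occurs $(r^{m-1}-1)/2$ times;
\item each distance $j/r$, for $1\le j\le (r-1)/2$, occurs $r^{m-1}$ times.
\end{itemize}
This is independent of $c$, so the set is equilateral for all $1\le p\le\infty$ at once.

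This is precisely the paper's argument. There the coordinates are one representative from each pair $\{\varphi,-\varphi\}$ of nonzero functionals on $V=\mathbb{F}_r^m$, and the count comes from the fibres of the evaluation map $\varphi\mapsto\varphi(\boldsymbol u)$ on $V^*$.
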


The second construction shows that, for suitable choices \(p=p_q\) depending
on a prime \(q\), the lower bound can be improved 
along an infinite sequence of dimensions.

\begin{theorem}\label{thm:lower-bound-4n-plus-1}
There exist infinitely many primes \(q\) with the following property: there is a real number
\(
p_q>\frac{q-1}{4}
\)
such that for every integer \(m\ge1\), if
\(
n=\frac{q^m-1}{4},
\)
then
\(
e_{p_q}(\T^n)\ge 4n+1.
\)
\end{theorem}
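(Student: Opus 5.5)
The plan is to adapt the cyclotomic construction behind Alon's prime case and Theorem~\ref{thm:odd-prime-power-lower-bound}, using a subgroup of order $4$ instead of $\{\pm1\}$. This leaves one defect in the distance function, and the free parameter $p$ will be used to repair it. Take a prime $q\equiv 1 \pmod 4$, $Q=q^m$, $n=(Q-1)/4$, and let $\mu=\{\pm1,\pm\iota\}\le \F_Q^{*}$ be the subgroup of order $4$. Identify $\F_q$ with $\{0,1,\dots,q-1\}\subseteq\Z$ and put $\varphi(u)=\mathrm{Tr}_{\F_Q/\F_q}(u)/q \in \T$. For a set $S=\{s_1,\dots,s_n\}$ of representatives of $\F_Q^{*}/\mu$ (to be chosen), define for each $a\in\F_Q$ the point $\boldsymbol{x}_a=(\varphi(as_1),\dots,\varphi(as_n))\in\T^n$. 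This gives $Q=4n+1$ points. For $c=a-b\neq0$ we get $d_p(\boldsymbol{x}_a,\boldsymbol{x}_b)^p=D_p(c):=\sum_j f_p(cs_j)$, where $f_p(u)=d_{\T}(\varphi(u),0)^p$.

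The first step is the structural analysis of $D_p$. Since $f_p(-u)=f_p(u)$, $D_p(c)$ depends only on which of the two classes $\{\pm u\},\{\pm\iota u\}$ in each $\mu$-orbit is hit by $cS$. Summing over all $\pm$-classes gives $D_p(c)+D_p(\iota c)=T_p$, where $T_p$ is independent of $c$. For $m>1$ the trace is equidistributed, so each value of $\mathrm{Tr}$ occurs equally often on each relevant subset. This should reduce everything to the case $m=1$, with all quantities multiplied by $q^{m-1}$; this is why $p_q$ does not depend on $m$.

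I will choose $S$ multiplicatively structured so that $D_p$ takes only two values. Take $S$ to be a union of cosets of a subgroup $K\le\F_Q^{*}$ with $K\cap\mu=\{1\}$ and $K\mu$ of index $2$. Concretely, write $\F_Q^{*}=\langle g\rangle$ and take $S=K\cup \iota' g K$ for a suitable $\iota'\in\mu$. Then $D_p$ is constant on cosets of $K\mu$, so it takes at most two values $A(p)$ and $B(p)$. The identity $D_p(c)+D_p(\iota c)=T_p$ must be compatible with this, which forces constraints on $q$: for instance, it requires $\iota\notin K\mu$, i.e.\ a condition on the quartic character of suitable elements. The infinitely many primes in the statement will be those satisfying this, obtained via Dirichlet's theorem on primes in progressions, e.g.\ $q\equiv 5\pmod 8$ so that $\iota$ is a non-square.

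The last step is to produce $p_q$ with $A(p_q)=B(p_q)$ by the intermediate value theorem. Write $A(p)-B(p)=\sum_{t=1}^{(q-1)/2}\varepsilon_t\,(t/q)^p$ with integer coefficients $\varepsilon_t$, where $\sum_t\varepsilon_t$ is the difference of the cardinalities, namely $0$. As $p\to\infty$ the sign is governed by the largest $t$ with $\varepsilon_t\neq0$. At small $p$ (or via $\sum_t\varepsilon_t t$, a class-number-type sum) the sign should be opposite, so a root exists. This sign-change is the main obstacle: I must compute the two sides for small $p$ and show that the top coefficient has the opposite sign. The bound $p_q>(q-1)/4$ should come from a Descartes-rule or Laguerre-type argument on the generalized polynomial $\sum_t\varepsilon_t e^{p\log(t/q)}$. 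The idea is that for $p\le (q-1)/4$ the contributions of the top terms cannot yet overcome the bulk, e.g.\ by comparing $((t-1)/t)^p$ with the counts. If the sign change is not provable for all $q$, I will restrict the infinite family of primes further to those where the sign can be verified.
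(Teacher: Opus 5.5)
Your overall skeleton (a cyclotomic two-distance set, equalised by tuning $p$ via the intermediate value theorem, with Dirichlet supplying infinitely many primes) is the paper's. However, the construction step and the root-location step both fail as stated.

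\textbf{The construction.} The conditions $K\cap\mu=\{1\}$ and $[\F_Q^*:K\mu]=2$ force $|K|=(Q-1)/8$ to be odd, i.e.\ $Q\equiv 9\pmod{16}$. So $\iota$ is a square, which rules out your suggested $q\equiv 5\pmod 8$ when $m=1$. Also, ``$\iota\notin K\mu$'' is impossible, since $\mu\subseteq K\mu$. More seriously, $D_p$ is invariant under $K$ and $\pm1$ but \emph{not} under $\iota$. By your own identity $D_p(\iota c)=T_p-D_p(c)$, and if $D_p$ were constant on $K\mu$-cosets it would be identically $T_p/2$ for every $p$. So $D_p$ lives on $\F_Q^*/K\{\pm1\}\cong\Z/4$. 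With $\Phi_i:=\sum_{k\in K}f_p(g^ik)$, its values are $\Phi_i+\Phi_{i+1}$ (or $\Phi_i+\Phi_{i+3}$), and equilaterality needs both $\Phi_0=\Phi_2$ and $\Phi_1=\Phi_3$: two equations in one unknown. For $q=41$, $g=6$, $K=\{1,10,16,18,37\}$, the values $\|u\|_{41}$ on $gK$ are $\{6,14,15,17,19\}$ and on $g^3K$ are $\{3,7,11,12,13\}$, so $\Phi_1>\Phi_3$ for every $p>0$.

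The paper's seed is instead $\boldsymbol b_j=(aj/q)_{a\in R_q}$, $j\in\F_q$, with $R_q$ the quadratic residues in $\{1,\dots,\frac{q-1}{2}\}$, a system of representatives of $\F_q^{*2}/\{\pm1\}$. Then the distance depends only on $\chi_q(j-k)$, and equilaterality is the single equation $F_q(p)=\sum_{a=1}^{(q-1)/2}\chi_q(a)a^p=0$.

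Your reduction of $m>1$ to $m=1$ by ``trace equidistribution'' is also false for multiplicatively defined $S\subseteq\F_Q^*$. Take $m$ even and $S$ a system of representatives of $\F_Q^{*2}/\{\pm1\}$, the direct $\F_Q$-analogue of the seed. Every $t\in\F_q^*$ occurs equally often as $\mathrm{Tr}(x)$ for $x\in c\F_Q^{*2}$, but the multiplicity is $\frac12(q^{m-1}\pm q^{m/2-1})$ according to the quadratic character of $c$ in $\F_Q$ (a Gauss sum effect). So the two distances have the $p$-independent ratio $(q^{m/2}+1)/(q^{m/2}-1)$ and are never equal. The paper avoids this: it concatenates the $q$-point seed in $\T^{(q-1)/4}$ with the $q$-ary simplex code of length $\frac{q^m-1}{q-1}$. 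Distinct codewords differ in exactly $q^{m-1}$ positions, so every pair has $p$-th power distance $q^{m-1}\delta^p$, and $p_q$ does not depend on $m$.

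\textbf{The root with $p_q>\frac{q-1}{4}$.} This is the real content of the theorem, and you leave it open. Descartes/Laguerre-type rules only bound the number of zeros of $\sum_t\varepsilon_t(t/q)^p$ by sign changes of the coefficients; they do not place a zero beyond $\frac{q-1}{4}$. Small-$p$ information is irrelevant here. You need the sign at $p\approx\frac{q-1}{4}$ itself, and this is not determined without hypotheses on $q$.

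The paper's mechanism runs as follows. Set $M=\frac{q-1}{2}$, $p=Mx$, and $a=M-j$. Since $M-j\equiv-\frac{2j+1}{2}\pmod q$, one gets
$\chi_q(2)M^{-Mx}F_q(Mx)=\sum_{j=0}^{M-1}\chi_q(2j+1)(1-j/M)^{Mx}$.
At $x=\frac12$ the weights satisfy $(1-j/M)^{M/2}\le e^{-j/2}$ and tend to $e^{-j/2}$. So the sign is governed by $\chi_q$ at the small odd numbers $1,3,\dots,11$. Requiring $q\equiv1\pmod4$ and $\chi_q(3)=\chi_q(5)=\chi_q(7)=\chi_q(11)=-1$ makes the limsup at most $1-t-t^2-t^3+t^4-t^5+\frac{t^6}{1-t}<0$, where $t=e^{-1/2}$. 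Hence the expression is negative for large $q$, while it tends to $1$ as $x\to\infty$. This gives a root $x_q>\frac12$, i.e.\ $p_q>\frac{q-1}{4}$. Infinitely many such primes exist by the Chinese remainder theorem, Dirichlet's theorem and quadratic reciprocity.
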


\section{Kusner's conjecture for \(p\in [4k+2,4k+4]\)}
We start with two pieces of notation that will be used throughout this section.
The first is a convenient way to write distance matrices.  For
$\boldsymbol{x},\boldsymbol{y}\in \mathbb R^n$ and $q>0$, write
\(
\ell_q(\boldsymbol{x},\boldsymbol{y})
:=
(\sum_{r=1}^n |x_r-y_r|^q)^{1/q}.
\)
Thus
\(
\ell_p(\boldsymbol{x},\boldsymbol{y})
=
\|\boldsymbol{x}-\boldsymbol{y}\|_p,
\)
which is the distance induced by the
\(\ell_p\)-norm on \(\mathbb R^n\).
The second is the standard notion of conditional negative definiteness.  This
notion is useful here because, after double centering, a conditionally negative
definite kernel becomes a positive semidefinite matrix, which is exactly the
linear algebraic mechanism behind our rank argument.

\begin{defn}
Let $X$ be a set. A symmetric kernel $K:X\times X\to \mathbb R$ is called
\emph{conditionally negative definite} on $X$ if for any $t\ge 2$ points
$x_1,\dots,x_t\in X$ and coefficients $a_1,\dots,a_t\in\mathbb R$ with
\(
\sum_{i=1}^t a_i=0,
\)
we have
\(
\sum_{i,j=1}^t a_i a_j K(x_i,x_j)\le 0.
\)
Moreover, $K$ is called \emph{strictly conditionally negative definite} on $X$ if
\(
\sum_{i,j=1}^t a_i a_j K(x_i,x_j)<0
\)
whenever $\sum_{i=1}^t a_i=0$ and $(a_1,\dots,a_t)\ne (0,\dots,0)$.
\end{defn}
\subsection{Useful tools}
We collect the auxiliary results needed for the proof. The main input is a
one-dimensional kernel decomposition. Its role is to correct the kernel
\(|x-y|^p\), on a prescribed finite set of real numbers, by a quadratic term
and finitely many oscillatory terms so that the remaining kernel is
conditionally negative definite. The crucial point is the sign control on the
oscillatory coefficients: at most \(k\) of them are positive. This is what
ultimately leads to the factor \((2k+1)n\) in the rank bound.

\begin{theorem}\label{thm:cnd}
Let \(p\in[4k+2,4k+4]\) for some integer \(k\ge 0\), and set
\(
d:=\lfloor p/2\rfloor.
\)
Let \(A=\{x_1,\dots,x_N\}\subseteq \mathbb R\) be a finite set. Then there exist
constants \(\theta_0=\theta_0(p,A)>0\) and \(C_p(A)>0\) such that, for every
\(0<\theta<\theta_0\), there exist a symmetric function
\(E_\theta:A\times A\to\mathbb R\) and real coefficients
\(
b_0=b_0(\theta), b_1=b_1(\theta),\dots,b_{d-1}=b_{d-1}(\theta)
\)
with
\(
|E_\theta(x,y)|\le C_p(A)\theta^2
\)
for \(x,y\in A\), such that the kernel
\[
K_\theta(x,y)
:=
b_0|x-y|^2
+\sum_{h=1}^{d-1} b_h\bigl(1-\cos(h\theta(x-y))\bigr)
-|x-y|^p
+E_\theta(x,y)
\]
is conditionally negative definite on \(A\). Moreover, among the coefficients
\(b_1,\dots,b_{d-1}\), at most \(k\) are positive.
\end{theorem}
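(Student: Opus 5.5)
The plan is to start from the classical integral representation of $|t|^p$ for non-even $p$, and then to replace its low-frequency part (a polynomial in $t$) by a short combination of cosines. The endpoints $p\in\{4k+2,4k+4\}$, where $|t|^p=t^p$ is a polynomial, will be treated directly by the same polynomial-matching step. For $2d<p<2d+2$ there is a constant $c_p\neq 0$, whose sign is $(-1)^{d+1}$, such that
\[
|t|^p=c_p\int_0^\infty\Bigl(\cos(ut)-\sum_{j=0}^{d}\frac{(-1)^j(ut)^{2j}}{(2j)!}\Bigr)u^{-1-p}\,du .
\]
Here $d=\lfloor p/2\rfloor$, and the interval $p\in(4k+2,4k+4)$ corresponds to $d=2k+1$ odd, so $c_p>0$.

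I would split the integral at a cutoff $\delta>0$, to be coupled to $\theta$ later.
\begin{itemize}
\item The part $u<\delta$ has integrand $O\bigl((ut)^{2d+2}u^{-1-p}\bigr)$. On the finite set of differences $x-y$ with $x,y\in A$ it is therefore $O_A(\delta^{2d+2-p})$, and it goes into $E_\theta$.
\item On $u\ge\delta$ everything converges separately. The $\cos(ut)$ term gives $c_p\int_\delta^\infty \cos(u(x-y))u^{-1-p}du$. This is a positive definite kernel with a positive weight, so $-c_p\int_\delta^\infty\cos(\cdot)\,u^{-1-p}du$ contributes a conditionally negative definite piece to $-|x-y|^p$.
\item The Taylor terms give an even polynomial $Q_\delta(t)=\sum_{j=0}^{d}q_j t^{2j}$ with $q_j\asymp \delta^{2j-p}$ and explicit signs $(-1)^{j+1}$. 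Constants are irrelevant, since adding a constant does not affect conditional negative definiteness when $\sum a_i=0$.
\end{itemize}
So $-|x-y|^p$ equals a conditionally negative definite kernel, plus $-Q_\delta(x-y)$ with its constant term dropped, plus a small error.

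It remains to absorb $-Q_\delta$, i.e. the monomials $t^2,t^4,\dots,t^{2d}$, into $b_0t^2+\sum_{h=1}^{d-1}b_h(1-\cos(h\theta t))$. The $t^2$ term is matched by $b_0$ (note $t^2$ is itself CND, but $b_0$ may have either sign; that is allowed since it is an explicit term of $K_\theta$). Expanding $1-\cos(h\theta t)=\sum_{j\ge1}(-1)^{j+1}(h\theta t)^{2j}/(2j)!$, matching the coefficients of $t^4,\dots,t^{2d}$ is a $(d-1)\times(d-1)$ Vandermonde-type system in the nodes $h^2$. It has a unique solution with $b_h=O(\sum_j|q_j|\theta^{-2j})$. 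The mismatch in orders $t^{2d+2}$ and higher is $O\bigl(\sum_h|b_h|(h\theta)^{2d+2}\bigr)$. This is made $O(\theta^2)$ by tying $\delta$ to $\theta$, e.g.\ $\delta=\theta^{\alpha}$ for a suitable $\alpha$, so that both this tail and the $u<\delta$ piece are $O_A(\theta^2)$. That also produces $\theta_0(p,A)$ and $C_p(A)$. I will check that some admissible $\alpha$ exists; if the exponents do not balance, the fallback is to move a bounded-frequency window of the cosine integral into $E_\theta$ instead.

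The sign condition is the main obstacle. I expect the solution to be dominated by its top-degree component, so the $b_h$ should behave like the coefficients of a $2(d-1)$-th order finite difference. Such coefficients are $\binom{2d-2}{d-1-h}$-type numbers with alternating signs $(-1)^{h}$ up to a global sign fixed by $\operatorname{sgn}(q_d)$. With $d=2k+1$, alternating signs among $h=1,\dots,2k$ give exactly $k$ positive coefficients, provided the global sign makes $h=1$ negative. I will verify this by inverting the Vandermonde system explicitly via Lagrange interpolation in $h^2$, and then checking that the lower-order $q_j$ do not flip any sign once $\delta$ is small compared with the cosine frequencies.

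For the endpoint cases there is no integral. For $p=4k+4$ ($d=2k+2$) I only need to express $-t^{4k+4}$ itself. This requires the top coefficient $t^{2d}$ to be matched, so the cosine expansion is pushed one order further, to an order-$2d$ finite difference with frequencies $h\le d-1$ and the remainder put into $E_\theta$. The sign pattern then again allows at most $k$ positive $b_h$. For $p=4k+2$ ($d=2k+1$) the same argument applies with $2k$ cosines.
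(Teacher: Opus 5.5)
\textbf{Gap: coupling the cutoff to $\theta$ cannot give the stated $O(\theta^2)$ error.} For $p\in(4k+2,4k+4)$ with $k\ge1$ (so $d\ge3$), no choice of $\delta=\delta(\theta)$ yields $|E_\theta|\le C_p(A)\theta^2$.

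The piece $u<\delta$ that you put into $E_\theta$ has size $\asymp\delta^{2d+2-p}$. This forces $\delta\le C\theta^{2/(2d+2-p)}$, and in particular $\delta\ll\theta$. But your correcting polynomial moves with $\delta$: $q_j\asymp\delta^{2j-p}\to\infty$. Matching it with frequencies $h\theta$ leaves a $t^{2d+2}$-mismatch of order $\max_{2\le j\le d}\delta^{2j-p}\theta^{2d+2-2j}$. In the forced regime $\delta\ll\theta$ this is at least $\delta^{2d-p}\theta^2\gg\theta^2$. With $\delta=\theta^\alpha$ the two requirements read $\alpha\ge 2/(2d+2-p)$ and $\alpha\le 0$, which are incompatible.

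The best balance is $\delta\asymp\theta$, and it gives only $|E_\theta|=O(\theta^{2d+2-p})$ with $2d+2-p\in(0,2)$. That would suffice for the rank argument of Theorem~\ref{thm:KusnerConj}, which only needs $E_\theta\to0$, but it is not the statement. Your fallback does not help: a low-frequency window of the cosine integral is not small, and the obstruction is the $\delta$-dependence of the polynomial, not the cosine part. Relatedly, which degree dominates the $b_h$ is governed by $\delta/\theta$. The top degree dominates only when $\delta\gg\theta$, which is the opposite of the regime your error bound forces; for $\delta\ll\theta$ the $j=2$ column dominates.

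\textbf{Missing idea: freeze $\delta=\delta(p,A)$ and absorb the $u<\delta$ piece by strictness.} Since the $x_i$ are distinct, $F_{\boldsymbol a}(u)=\sum_i a_i e^{\ii u x_i}\not\equiv0$ for $\boldsymbol a\ne\boldsymbol 0$. Hence
\[
\mu:=\min_{\|\boldsymbol a\|_2=1}\int_1^\infty|F_{\boldsymbol a}(u)|^2u^{-1-p}\,du>0 .
\]
For $\delta\le1$, the cosine part contributes at most $-c_p\mu\|\boldsymbol a\|_2^2$ to the quadratic form. The $u<\delta$ piece contributes at most $C_pN(\operatorname{diam}A)^{2d+2}\delta^{2d+2-p}\|\boldsymbol a\|_2^2$ in absolute value. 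So for one fixed small $\delta$, the kernel $\sum_{j=1}^d q_j(x-y)^{2j}-|x-y|^p$ is CND on $A$. Its coefficients are independent of $\theta$, with $q_d\ne0$ and $\operatorname{sgn}q_j=(-1)^{j+1}$.

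Your matching step then runs as in the paper. The mismatch is $O(\theta^{2d+2-2d})=O(\theta^2)$. As $\theta\to0$ the top degree dominates, so $\operatorname{sgn}b_h=\operatorname{sgn}(M^{-1})_{h,d-1}=(-1)^h$, where $M=(h^{2r})_{2\le r\le d,\,1\le h\le d-1}$. This gives exactly $k$ positive coefficients among $2k$.

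Repaired this way, your route differs from the paper only in how the fixed polynomial is produced. The paper uses the same integral formula merely to get $Q_{|t|^p}>0$ on $V_d$ (Lemma~\ref{lem odd d}). It then builds the polynomial by descending induction along the moment filtration (Lemma~\ref{lem cnd}). Your version would write it down explicitly, signs included, via an $A$-dependent cutoff.

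\textbf{Endpoint $p=4k+4$: the global sign must be computed.} Here there are $2k+1$ coefficients, so alternation alone allows $k+1$ positive ones. Your proviso ``$b_1<0$'' is superfluous for $2k$ coefficients but essential here. Matching $t^{2d}$ gives $b_h=-(2d)!\,(M^{-1})_{h,d-1}\theta^{-2d}$. Since $d$ is even, $\operatorname{sgn}(M^{-1})_{h,d-1}=(-1)^{h+1}$. Hence $b_h>0$ only for $h=2,4,\dots,2k$, which is at most $k$ positive coefficients.
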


The remaining tools are elementary linear algebraic facts. The first one explains
why conditionally negative definite kernels are useful for us: after double
centering, they become positive semidefinite matrices.

\begin{lemma}\label{lem:cnd posit}
Let \(X=\{x_1,x_2,\dots,x_m\}\) be a finite set, and let
\(K:X\times X\to\mathbb R\) be a symmetric kernel that is conditionally negative
definite on \(X\). Then the matrix
\[
-C_m\bigl(K(x_i,x_j)\bigr)_{i,j=1}^mC_m
\]
is positive semidefinite, where \(C_m=I_m-m^{-1}J_m\).
\end{lemma}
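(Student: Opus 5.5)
The plan is to prove \cref{lem:cnd posit} directly from the definition of conditional negative definiteness, by testing the quadratic form of the centered matrix against an arbitrary vector. Write \(M=(K(x_i,x_j))_{i,j=1}^m\). Since \(C_m\) is symmetric, the matrix \(-C_mMC_m\) is symmetric. It therefore suffices to show \(v^{\top}(-C_mMC_m)v\ge 0\) for every \(v\in\mathbb R^m\).

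Fix \(v\in\mathbb R^m\) and set \(a:=C_mv=v-\bigl(m^{-1}\sum_i v_i\bigr)\mathbf 1\). The key observation is that \(C_m\mathbf 1=0\) and \(C_m\) is symmetric, so \(\mathbf 1^{\top}a=\mathbf 1^{\top}C_mv=(C_m\mathbf 1)^{\top}v=0\). Thus \(\sum_i a_i=0\). Then
\[
v^{\top}(-C_mMC_m)v=-(C_mv)^{\top}M(C_mv)=-\sum_{i,j=1}^m a_ia_jK(x_i,x_j).
\]
By conditional negative definiteness of \(K\) on \(X\), applied with \(t=m\), the points \(x_1,\dots,x_m\) and the coefficients \(a_1,\dots,a_m\) summing to zero, the double sum is \(\le 0\). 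Hence the quadratic form is nonnegative.

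Only two edge cases need a remark. If \(m=1\), then \(C_1=0\) and the matrix is zero, which is trivially positive semidefinite; this matters because the definition of conditional negative definiteness requires \(t\ge 2\). The points \(x_i\) are distinct since \(X\) is a set, so the definition applies verbatim. There is no real obstacle here; the only step that needs checking is that \(C_m\) maps every vector into the hyperplane \(\sum_i a_i=0\).
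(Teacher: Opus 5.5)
Your proof is correct and is essentially the paper's argument: set $u=C_mv$, note that $u$ lies in the hyperplane $\sum_i u_i=0$, and conclude $v^{\mathsf T}(-C_mMC_m)v=-u^{\mathsf T}Mu\ge 0$ by conditional negative definiteness. Your remark about the degenerate case $m=1$ (where $C_1=0$) is a small addition that the paper leaves implicit.
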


The second lemma is the rank-forcing step. It says that a positive semidefinite
matrix, after adding the centered identity \(C_m\) and a sufficiently small
symmetric error, must still have rank at least \(m-1\).

\begin{lemma}\label{lem:rank2}
Let \(C_m:=I_m-m^{-1}J_m\). Assume that
\[
G=NN^{\mathsf T}+C_m+E,
\]
where \(E\) is a symmetric matrix satisfying \(\lambda_{\min}(E)>-1\). Then
\[
\operatorname{rank}(G)\ge m-1.
\]
\end{lemma}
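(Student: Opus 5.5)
The plan is to show that $\ker G$ has dimension at most $1$ by testing $G$ against vectors orthogonal to the all-ones vector $\mathbf{1}$. This gives the rank bound directly, since $\operatorname{rank}(G)=m-\dim\ker G$.

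The key observation is that $C_m$ is the orthogonal projection onto the hyperplane $W:=\mathbf{1}^{\perp}$. Hence $v^{\mathsf T}C_mv=\|v\|^2$ for every $v\in W$. Suppose, for contradiction, that $\dim\ker G\ge 2$. Then $\ker G\cap W\neq\{0\}$, because $\ker G\cap W$ has dimension at least $\dim\ker G+\dim W-m\ge 2+(m-1)-m=1$. Choose a unit vector $v\in\ker G\cap W$. Then
\[
0=v^{\mathsf T}Gv=\|N^{\mathsf T}v\|^2+v^{\mathsf T}C_mv+v^{\mathsf T}Ev\ \ge\ 0+1+\lambda_{\min}(E)\ >\ 0,
\]
which is a contradiction. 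Here we used that $NN^{\mathsf T}$ is positive semidefinite, that $v^{\mathsf T}C_mv=\|v\|^2=1$ because $v\in W$, and the Rayleigh bound $v^{\mathsf T}Ev\ge\lambda_{\min}(E)\|v\|^2$, which holds because $E$ is symmetric. Therefore $\dim\ker G\le 1$, and so $\operatorname{rank}(G)\ge m-1$.

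Note that $G$ itself is symmetric, being a sum of symmetric matrices, although the argument does not need this. There is no genuine obstacle here. The only point requiring care is the dimension count that forces the kernel to meet $W$ nontrivially. This is exactly where the bound is $m-1$ rather than $m$: the direction $\mathbf{1}$ is annihilated by $C_m$ and so receives no help from it.

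Equivalently, one can phrase the argument with the min-max principle. The matrix $NN^{\mathsf T}+C_m+E$ restricted to $W$ has quadratic form at least $(1+\lambda_{\min}(E))\|v\|^2>0$, so $G$ has at least $m-1$ positive eigenvalues. This gives the same conclusion.
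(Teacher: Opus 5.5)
Your proof is correct and essentially the same as the paper's. Both arguments show that the quadratic form of $G$ is bounded below by $(1+\lambda_{\min}(E))\|\boldsymbol v\|_2^2>0$ on $\boldsymbol 1^{\perp}$, so $G$ cannot annihilate any nonzero vector there. Your explicit dimension count for $\ker G\cap\boldsymbol 1^{\perp}$ just spells out the step the paper compresses into ``consequently $G$ has rank at least $m-1$.''
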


The last lemma is the low-rank input for the oscillatory terms. After centering, each basic cosine distance matrix is controlled by the
centered cosine Gram matrix, which has rank at most \(2\).

\begin{lemma}\label{lem:rank d}
Let \(u_1,\dots,u_m\in\mathbb R\), let \(\theta>0\) and \(r\ge1\), and define
\[
D:=\bigl(1-\cos(r\theta(u_i-u_j))\bigr)_{i,j=1}^m.
\]
Then the matrix \(-C_mDC_m\) is positive semidefinite and has rank at most \(2\),
where \(C_m:=I_m-m^{-1}J_m\).
\end{lemma}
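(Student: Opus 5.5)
We write $\boldsymbol{u}=(u_1,\dots,u_m)^{\mathsf T}$, and for any real vector $\boldsymbol{v}$ we take $\cos\boldsymbol{v}$ and $\sin\boldsymbol{v}$ entrywise. The plan is to write $D$ explicitly as a combination of a rank-one all-ones part and a small Gram matrix, and then use that $C_m$ kills the all-ones vector. With $\phi_i:=r\theta u_i$, the angle-difference formula gives
\[
1-\cos(\phi_i-\phi_j)=1-\cos\phi_i\cos\phi_j-\sin\phi_i\sin\phi_j .
\]
Setting $\boldsymbol{c}:=\cos(r\theta\boldsymbol{u})$, $\boldsymbol{s}:=\sin(r\theta\boldsymbol{u})$ and $\boldsymbol{1}$ for the all-ones vector, this reads
\[
D=J_m-\boldsymbol{c}\boldsymbol{c}^{\mathsf T}-\boldsymbol{s}\boldsymbol{s}^{\mathsf T}
=J_m-NN^{\mathsf T},
\]
where $N:=[\boldsymbol{c}\ \boldsymbol{s}]$ is the $m\times 2$ matrix with columns $\boldsymbol{c},\boldsymbol{s}$.

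Next we conjugate by $C_m$. Since $C_m$ is symmetric and $C_m\boldsymbol{1}=\boldsymbol{0}$, we have $C_mJ_mC_m=C_m\boldsymbol{1}\boldsymbol{1}^{\mathsf T}C_m=0$. Hence
\[
-C_mDC_m=C_mNN^{\mathsf T}C_m=(C_mN)(C_mN)^{\mathsf T}.
\]
This is a Gram matrix, so it is positive semidefinite. Its rank is at most $\operatorname{rank}(C_mN)\le 2$, because $C_mN$ has only two columns.

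Alternatively, positive semidefiniteness also follows from Lemma~\ref{lem:cnd posit}. For $\sum a_i=0$,
\[
\sum_{i,j} a_ia_j\bigl(1-\cos(\phi_i-\phi_j)\bigr)=-\Bigl|\sum_i a_ie^{\ii\phi_i}\Bigr|^2\le 0,
\]
so the kernel is conditionally negative definite. The direct factorization is cleaner, however, and also yields the rank bound.

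The only step needing care is the sign bookkeeping in $D=J_m-NN^{\mathsf T}$. Beyond that there is no real obstacle: the result is an immediate consequence of the cosine addition formula.
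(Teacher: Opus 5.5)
Your proof is correct and takes essentially the same route as the paper. The paper also writes $D=J_m-H$ with $H=\boldsymbol{c}\boldsymbol{c}^{\mathsf T}+\boldsymbol{s}\boldsymbol{s}^{\mathsf T}$ from the cosine difference formula, uses $C_mJ_m=J_mC_m=0$ to get $-C_mDC_m=C_mHC_m$, and reads off positive semidefiniteness and rank at most $2$ from this compression of a rank-two Gram matrix.
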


\subsection{Sketch of the proof}

We give a short overview of the proof before entering the details. The argument is a rank comparison. Starting from an equilateral set
\(
A=\{\boldsymbol{x}_1,\dots,\boldsymbol{x}_m\}\subseteq\mathbb R^n
\)
with \(p\in[4k+2,4k+4]\), we normalize so that
\(
\ell_p(\boldsymbol{x}_i,\boldsymbol{x}_j)=1
\)
for all \(i\ne j\). Our goal is to prove
\(
m\le (2k+1)n+1.
\)
The strategy is to build a matrix identity whose right hand side has rank at least \(m-1\), while its left hand side has rank at most \((2k+1)n\).

The key input is the one-dimensional kernel decomposition in Theorem~\ref{thm:cnd}. Applied to the finite set of all coordinate values appearing in \(A\), it expresses the kernel \(|x-y|^p\), up to a small error, as a sum of a quadratic term, finitely many oscillatory terms of the form \(1-\cos(h\theta(x-y))\), and a conditionally negative definite remainder. The important point is not merely that such a decomposition exists, but that among the oscillatory coefficients \(b_1,\dots,b_{d-1}\), at most \(k\) are positive. After summing over all coordinates, the full distance matrix of \(A\) is decomposed into a Euclidean quadratic part, a controlled number of positive-coefficient oscillatory parts, a conditionally negative definite part, and a small error.

The next step is to double-center the resulting matrix identity by
\[
C_m=I_m-m^{-1}J_m.
\]
This is where the equilateral condition enters in a very clean way. Since the matrix
\(
\Lambda=\bigl(\ell_p(\boldsymbol{x}_i,\boldsymbol{x}_j)^p\bigr)_{i,j=1}^m
\)
is equal to \(J_m-I_m\), double centering gives
\(
C_m\Lambda C_m=-C_m.
\)
Thus the equilateral condition contributes exactly the centered identity \(C_m\), with the correct sign after rearrangement.

The conditionally negative definite part is favorable after this centering operation. Indeed, Lemma~\ref{lem:cnd posit} says that if \(K\) is conditionally negative definite, then its centered matrix \(-C_m(K(x_i,x_j))C_m\) is positive semidefinite. The oscillatory terms with negative coefficients are also favorable: by Lemma~\ref{lem:rank d}, each centered cosine kernel is positive semidefinite. Therefore all these favorable terms can be moved to the same side and absorbed into a positive semidefinite matrix. In schematic form, the centered identity becomes
\[
2b_0\widetilde G_2+\sum_{t=1}^n G_\theta^+(t)
=
NN^{\mathsf T}+C_m+E.
\]
Here \(\widetilde G_2\) is the centered Gram matrix coming from the quadratic term \(|x-y|^2\), the matrix \(G_\theta^+(t)\) collects the positive-coefficient oscillatory contributions from the \(t\)-th coordinate, \(NN^{\mathsf T}\) absorbs the positive semidefinite terms, and \(E\) is the small centered error.

The smallness of \(E\) is used only at this point. By taking \(\theta>0\) sufficiently small, the matrix \(E\) satisfies
\(
\lambda_{\min}(E)>-1.
\)
Lemma~\ref{lem:rank2} then applies to the right hand side and gives the lower bound
\[
\operatorname{rank}\bigl(NN^{\mathsf T}+C_m+E\bigr)\ge m-1.
\]

It remains to bound the rank of the left hand side from above. The matrix \(\widetilde G_2\) is a centered Gram matrix of \(m\) vectors in \(\mathbb R^n\), so
\(
\operatorname{rank}(\widetilde G_2)\le n.
\)
For each fixed coordinate \(t\), the matrix \(G_\theta^+(t)\) is a sum of at most \(k\) centered cosine kernels, and each has rank at most \(2\) by Lemma~\ref{lem:rank d}. Hence
\(
\operatorname{rank}(G_\theta^+(t))\le 2k.
\)
Consequently,
\[
\operatorname{rank}\Bigl(2b_0\widetilde G_2+\sum_{t=1}^n G_\theta^+(t)\Bigr)
\le n+2kn=(2k+1)n.
\]
Comparing the lower and upper rank bounds gives
\(
m-1\le (2k+1)n.
\)
\subsection{Proof of Theorem~\ref{thm:KusnerConj}}

Set \(d:=\lfloor p/2\rfloor\). Let
\(
A=\{\boldsymbol{x}_1,\boldsymbol{x}_2,\dots,\boldsymbol{x}_m\}\subseteq\mathbb R^n
\)
be an equilateral set in \((\mathbb R^n,\|\cdot\|_p)\). After scaling, we may assume that
\(
\ell_p(\boldsymbol{x}_i,\boldsymbol{x}_j)=1
\)
for all \(i\ne j\). After translating the whole set, we may further assume that
\(
\boldsymbol{x}_1=\boldsymbol{0}.
\)
Write
\(
\boldsymbol{x}_i=(x_i^{(1)},\dots,x_i^{(n)})
\)
for \(1\le i\le m\). Since \(\boldsymbol{x}_1=\boldsymbol{0}\) and \(\|\boldsymbol{x}_i-\boldsymbol{x}_1\|_p=1\), every coordinate of every point lies in \([-1,1]\). Thus
\(
\boldsymbol{x}_i\in[-1,1]^n
\)
for all \(1\le i\le m\).

We now apply the one-dimensional kernel decomposition to all coordinate values that occur in \(A\). For each coordinate \(t\in[n]\), define
\[
A_t:=\{x_i^{(t)}:1\le i\le m\}\subseteq[-1,1],
\]
and set
\(
B:=\bigcup_{t=1}^n A_t.
\)
It is clear that \(B\) is finite. Choose \(\theta>0\) sufficiently small so that
\(
\theta<\theta_0(p,B)
\)
and
\(
mnC_p(B)\theta^2<1,
\)
where \(\theta_0(p,B)\) and \(C_p(B)\) are given by Theorem~\ref{thm:cnd}. Applying Theorem~\ref{thm:cnd} to \(B\), we obtain a symmetric function \(E_\theta:B\times B\to\mathbb R\) and real coefficients
\(
b_0,b_1,\dots,b_{d-1}
\)
such that
\(
|E_\theta(x,y)|\le C_p(B)\theta^2
\)
for \(x,y\in B\), the kernel
\[
K(x,y)
:=
b_0|x-y|^2+\sum_{r=1}^{d-1}b_r\bigl(1-\cos(r\theta(x-y))\bigr)-|x-y|^p+E_\theta(x,y)
\]
is conditionally negative definite on \(B\), and among \(b_1,\dots,b_{d-1}\) at most \(k\) are positive.

For every pair of points \(\boldsymbol{x},\boldsymbol{y}\in A\), the definition of \(K\) gives
\begin{equation}\label{eq:psi-distance-corrected}
\sum_{t=1}^n\bigl(K(x^{(t)},y^{(t)})-E_\theta(x^{(t)},y^{(t)})\bigr)
+\ell_p(\boldsymbol{x},\boldsymbol{y})^p
=
b_0\|\boldsymbol{x}-\boldsymbol{y}\|^2_2
+\sum_{t=1}^n\sum_{r=1}^{d-1}
b_r\bigl(1-\cos(r\theta(x^{(t)}-y^{(t)}))\bigr).
\end{equation}
We now turn this scalar identity into a matrix identity. Define
\(
M_K:=\sum_{t=1}^n\bigl(K(x_i^{(t)},x_j^{(t)})\bigr)_{i,j=1}^m\) and
\(
M_E:=\sum_{t=1}^n\bigl(E_\theta(x_i^{(t)},x_j^{(t)})\bigr)_{i,j=1}^m,
\)
and let
\(
\Lambda:=\bigl(\ell_p(\boldsymbol{x}_i,\boldsymbol{x}_j)^p\bigr)_{i,j=1}^m.
\)
Since \(A\) is equilateral and \(\ell_p(\boldsymbol{x}_i,\boldsymbol{x}_j)=1\) for \(i\ne j\), we have
\(
\Lambda=J_m-I_m.
\)

We separate the oscillatory terms according to the signs of their coefficients. Put
\[
R_+:=\{r\in\{1,\dots,d-1\}:b_r>0\},\]
and \[
R_-:=\{r\in\{1,\dots,d-1\}:b_r<0\}
\]
respectively. By our selection and Theorem~\ref{thm:cnd}, \(|R_+|\le k\). For each \(t\in[n]\), define
\[
D_\theta^+(t):=
\sum_{r\in R_+} b_r
\bigl(1-\cos(r\theta(x_i^{(t)}-x_j^{(t)}))\bigr)_{i,j=1}^m
\]
and
\[
D_\theta^-(t):=
\sum_{r\in R_-} (-b_r)
\bigl(1-\cos(r\theta(x_i^{(t)}-x_j^{(t)}))\bigr)_{i,j=1}^m.
\]
Thus both \(D_\theta^+(t)\) and \(D_\theta^-(t)\) are nonnegative linear combinations of the basic cosine-distance matrices. Set
\(
G_\theta^+(t):=-C_mD_\theta^+(t)C_m\) and
\(
G_\theta^-(t):=-C_mD_\theta^-(t)C_m.
\)
By Lemma~\ref{lem:rank d}, each of \(G_\theta^+(t)\) and \(G_\theta^-(t)\) is positive semidefinite. Moreover,
\[
\operatorname{rank}(G_\theta^+(t))
\le
\sum_{r\in R_+}
\operatorname{rank}\Bigl(
-C_m\bigl(1-\cos(r\theta(x_i^{(t)}-x_j^{(t)}))\bigr)_{i,j=1}^mC_m
\Bigr)
\le 2|R_+|\le 2k.
\]

Recall that \(
C_m=I_m-m^{-1}J_m.\) Next let
\(
D_2:=\bigl(\|\boldsymbol{x}_i-\boldsymbol{x}_j\|_2^2\bigr)_{i,j=1}^m
\) and \(
\widetilde G_2:=-\frac12 C_mD_2C_m.
\)
Notice that
\(\widetilde G_2\) is the Gram matrix of the centered vectors
\(
\boldsymbol{x}_i-\overline{\boldsymbol{x}}\), where
\(
\overline{\boldsymbol{x}}:=\frac1m\sum_{j=1}^m\boldsymbol{x}_j.
\)
Since these vectors lie in \(\mathbb R^n\), we have
\[
\operatorname{rank}(\widetilde G_2)\le n.
\]

By \eqref{eq:psi-distance-corrected}, we have
\[
M_K-M_E+\Lambda
=
b_0D_2+\sum_{t=1}^nD_\theta^+(t)-\sum_{t=1}^nD_\theta^-(t).
\]
Since \(C_mJ_m=J_mC_m=0\) and \(C_m^2=C_m\), we have
\[
C_m\Lambda C_m=C_m(J_m-I_m)C_m=-C_m.
\]
Multiplying the preceding matrix identity by \(-C_m\) on the left and by \(C_m\) on the right, we obtain
\[
-C_mM_KC_m+C_mM_EC_m+C_m
=
2b_0\widetilde G_2+\sum_{t=1}^nG_\theta^+(t)-\sum_{t=1}^nG_\theta^-(t),
\]
which implies that
\begin{equation}\label{eq:main-rank-identity-corrected}
2b_0\widetilde G_2+\sum_{t=1}^nG_\theta^+(t)
=
\Bigl(-C_mM_KC_m+\sum_{t=1}^nG_\theta^-(t)\Bigr)+C_m+C_mM_EC_m.
\end{equation}

We then estimate the rank of the right-hand side from below. First, observe that the coordinatewise sum of \(K\) is still conditionally negative definite on \(A\). Indeed, let \(\lambda_1,\dots,\lambda_m\in\mathbb R\) satisfy \(\sum_{i=1}^m\lambda_i=0\). Then \[ \sum_{i,j=1}^m \lambda_i\lambda_j \sum_{t=1}^n K(x_i^{(t)},x_j^{(t)}) = \sum_{t=1}^n \sum_{i,j=1}^m \lambda_i\lambda_j K(x_i^{(t)},x_j^{(t)}). \] For each fixed \(t\), all points \(x_i^{(t)}\) belong to \(B=\bigcup_{t=1}^n A_t\). Since \(K\) is conditionally negative definite on \(B\), the inner sum is non-positive. If some of the values \(x_i^{(t)}\) coincide, we simply combine the corresponding coefficients: for \(z\in B\), set \( \mu_z:=\sum_{i:x_i^{(t)}=z}\lambda_i. \) Then \(\sum_z\mu_z=0\), and \[ \sum_{i,j=1}^m \lambda_i\lambda_j K(x_i^{(t)},x_j^{(t)}) = \sum_{z,z'\in B}\mu_z\mu_{z'}K(z,z')\le0. \] Thus every coordinate contribution is conditionally negative definite, and their sum is conditionally negative definite as well. Hence the kernel \[ (\boldsymbol{x}_i,\boldsymbol{x}_j)\mapsto \sum_{t=1}^nK(x_i^{(t)},x_j^{(t)}) \] is conditionally negative definite on \(A\). By Lemma~\ref{lem:cnd posit}, \(-C_mM_KC_m\) is positive semidefinite. Since each \(G_\theta^-(t)\) is also positive semidefinite, the matrix
\[
-C_mM_KC_m+\sum_{t=1}^nG_\theta^-(t)
\]
is positive semidefinite. Since every real positive semidefinite matrix admits a Gram factorization, we may write 
\[ -C_mM_KC_m+\sum_{t=1}^nG_\theta^-(t)=NN^{\mathsf T} \]
for some real matrix \(N\).

It remains to check that the error term is small enough.
Recall that for a real symmetric matrix \(M\), \(\|M\|_{\textup{OP}}\) denotes the largest
absolute value of its eigenvalues.  Each entry of \(M_E\) has absolute value at
most \(nC_p(B)\theta^2\).  Hence, by the Gershgorin circle theorem~\cite{1931Ger},
\[
\|M_E\|_\textup{OP}
\le
\max_{1\le i\le m}\sum_{j=1}^m |(M_E)_{ij}|
\le mnC_p(B)\theta^2<1.
\]
Since \(C_m\) is an orthogonal projection,
\[
\|C_mM_EC_m\|_\textup{OP}\le \|M_E\|_\textup{OP}<1.
\]
The matrix \(C_mM_EC_m\) is symmetric, and hence
\[
\lambda_{\min}(C_mM_EC_m)>-1.
\]
Thus the right hand side of \eqref{eq:main-rank-identity-corrected} has the form
\[
NN^{\mathsf T}+C_m+E
\]
with \(E=C_mM_EC_m\) and \(\lambda_{\min}(E)>-1\).  Then Lemma~\ref{lem:rank2} gives
\[
\operatorname{rank}\Bigl(
\Bigl(-C_mM_KC_m+\sum_{t=1}^nG_\theta^-(t)\Bigr)+C_m+C_mM_EC_m
\Bigr)\ge m-1.
\]
Therefore, by \eqref{eq:main-rank-identity-corrected},
\[
m-1
\le
\operatorname{rank}\Bigl(2b_0\widetilde G_2+\sum_{t=1}^nG_\theta^+(t)\Bigr).
\]

On the other hand, the rank of the left hand side is controlled by the Euclidean part and the positive oscillatory parts:
\[
\operatorname{rank}\Bigl(2b_0\widetilde G_2+\sum_{t=1}^nG_\theta^+(t)\Bigr)
\le
\operatorname{rank}(\widetilde G_2)+\sum_{t=1}^n\operatorname{rank}(G_\theta^+(t))
\le n+2kn=(2k+1)n.
\]
Combining the two inequalities gives
\(
m-1\le (2k+1)n,
\)
and hence
\[
m\le (2k+1)n+1.
\]
This completes the proof.

\subsection{Proofs of the tools}\label{subsec:proofs-tools}
We now prove the auxiliary results stated above. 
The main task is to prove the one-dimensional decomposition, Theorem~\ref{thm:cnd}. 
The proof has two stages. 
First we show that, on any finite set of real numbers, 
the kernel \(-|x-y|^p\) can be corrected by adding a suitable even polynomial in \(x-y\), 
so that the resulting kernel is strictly conditionally negative definite.
Then we approximate this correcting polynomial by a finite cosine sum with coefficients of prescribed signs.

\subsubsection{Proof of Theorem~\ref{thm:cnd}}
We begin with the polynomial correction. 
In the range \(p\in(4k+2,4k+4)\), one has \(d=\lfloor p/2\rfloor=2k+1\).
The oddness of \(d\) is the key point:
it is what allows us to add a suitable even polynomial in \(x-y\) to \(-|x-y|^p\), so that the resulting kernel becomes strictly conditionally negative definite.

\begin{lemma}\label{lem cnd}
Let \(p\in(4k+2,4k+4)\) for some integer \(k\ge0\), and set
\(
d:=\lfloor p/2\rfloor=2k+1.
\)
Let \(A=\{x_1,\dots,x_m\}\subseteq\mathbb R\) be a finite set. Then there exist real constants
\(
C_2(A,p),C_4(A,p),\dots,C_{2d}(A,p)
\)
such that the kernel
\[
K(x,y):=\sum_{r=1}^d C_{2r}(A,p)(x-y)^{2r}-|x-y|^p
\]
is strictly conditionally negative definite on \(A\).
\end{lemma}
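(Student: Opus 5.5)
The plan is to represent \(|t|^p\) as an integral of cosine kernels with a polynomial correction, and to show that on \(A\) the cosine part is strictly conditionally negative definite while every other piece is either an even polynomial of degree at most \(2d\) (which we put into the \(C_{2r}\)) or negligibly small. Let \(T_d(u):=\sum_{j=1}^{d}(-1)^{j+1}u^{2j}/(2j)!\) be the Taylor polynomial of \(1-\cos u\) up to degree \(2d\). Since \(2d<p<2d+2\), the integral
\[
I(p):=\int_0^\infty\bigl(1-\cos u-T_d(u)\bigr)u^{-1-p}\,du
\]
converges absolutely. Near \(0\) the integrand is \(O(u^{2d+1-p})\), and near \(\infty\) it is \(O(u^{2d-1-p})\). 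Substituting \(u=s|t|\) gives, for all real \(t\),
\[
I(p)\,|t|^p=\int_0^\infty\bigl(1-\cos(st)-T_d(st)\bigr)s^{-1-p}\,ds .
\]

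\textbf{Step 1: the sign of \(I(p)\).} The classical formula \(\int_0^\infty(\cos u-\text{Taylor})u^{-1-p}du=\Gamma(-p)\cos(\pi p/2)\) gives \(I(p)=-\Gamma(-p)\cos(\pi p/2)\). Here the oddness of \(d\) enters. For \(p\in(4k+2,4k+4)\) one checks \(\Gamma(-p)\cos(\pi p/2)>0\); for example, at \(p=5/2\) both factors are negative. The product has no sign change on the interval, because the zero of \(\cos\) at \(p=4k+3\) cancels a pole of \(\Gamma\). Hence \(I(p)<0\). Put \(c:=-1/I(p)>0\). Then
\[
-|t|^p=c\int_0^\infty\bigl(1-\cos(st)-T_d(st)\bigr)s^{-1-p}\,ds .
\]

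\textbf{Step 2: splitting at a small \(\varepsilon>0\).} For the part \(s\ge\varepsilon\), the polynomial terms integrate separately, since \(\int_\varepsilon^\infty s^{2j-1-p}ds<\infty\) for \(j\le d\) because \(2j<p\). They therefore contribute \(\sum_{r=1}^d\gamma_r(\varepsilon)t^{2r}\), and we set \(C_{2r}:=-c\gamma_r(\varepsilon)\). What remains is \(H_\varepsilon(t):=c\int_\varepsilon^\infty(1-\cos(st))s^{-1-p}ds\). For \(a\ne0\) with \(\sum a_i=0\) we have \(\sum_{i,j}a_ia_j(1-\cos s(x_i-x_j))=-\bigl|\sum_i a_ie^{\ii sx_i}\bigr|^2\). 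This exponential sum is a nontrivial trigonometric polynomial in \(s\), because the \(x_i\) are distinct, so it vanishes only on a discrete set. Thus \(H_\varepsilon\) is strictly conditionally negative definite. Moreover, the quadratic form \(Q_\varepsilon(a)\) is monotone decreasing as \(\varepsilon\downarrow0\). By compactness of the unit sphere in \(\{\sum a_i=0\}\), we get \(Q_\varepsilon(a)\le Q_1(a)\le-\delta\|a\|^2\) for all \(\varepsilon\le1\), with some \(\delta=\delta(A,p)>0\).

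\textbf{Step 3: the low-frequency remainder.} The remainder is \(L_\varepsilon(t):=c\int_0^\varepsilon(1-\cos(st)-T_d(st))s^{-1-p}ds\). Using \(|1-\cos u-T_d(u)|\le u^{2d+2}/(2d+2)!\), we get \(|L_\varepsilon(t)|\le c'\,\varepsilon^{2d+2-p}t^{2d+2}\). Since \(A\) is finite and \(2d+2-p>0\), the quadratic form of \(L_\varepsilon\) on \(A\) has operator norm at most \(O_A(\varepsilon^{2d+2-p})\). Choosing \(\varepsilon\) small makes this smaller than \(\delta\). Then \(K=H_\varepsilon+L_\varepsilon\) is strictly conditionally negative definite, and \(K(x,y)=\sum_r C_{2r}(x-y)^{2r}-|x-y|^p\), as required.

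The main obstacle is Step 1, namely verifying the sign of \(I(p)\) over the whole interval. This is the only place where \(d\) odd (that is, \(p\in(4k+2,4k+4)\) rather than \((4k,4k+2)\)) is used. I would justify it through the Gamma-function identity, obtained by analytic continuation from \(\int_0^\infty(\cos u-1)u^{-1-p}du=\Gamma(-p)\cos(\pi p/2)\) for \(0<p<2\). As a fallback, repeated integration by parts reduces \(I(p)\) to \(\pm\int_0^\infty(1-\cos u)u^{-1-(p-2d)}du\) times a product of positive factors \(1/((p-j)\cdots)\), and the sign can then be tracked directly.
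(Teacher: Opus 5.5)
Your proposal is correct, and after Step 1 it follows a genuinely different route from the paper.

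Your Step 1 is the same sign computation the paper carries out in Lemma~\ref{lem odd d}: your $I(p)$ is $-J_{p,d}$ there, and your integration-by-parts fallback is exactly the paper's argument. After that the proofs diverge.

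The paper applies the integral representation only on $V_d$, the vectors whose moments up to order $d$ vanish. There every polynomial term is annihilated, so no truncation is needed, and one gets positive definiteness of $Q_{|t|^p}$ on $V_d$. The paper then builds the coefficients by a purely linear-algebraic descending induction along $V_0\supseteq V_1\supseteq\cdots\supseteq V_d$. This uses two facts: on $V_{r-1}$ the kernel $(x-y)^{2r}$ acts as the rank-one form $(-1)^r\binom{2r}{r}M_r(\boldsymbol a)^2$, and $V_{r-1}$ splits $B_{K_{r+1}}$-orthogonally as $W_r\oplus V_r$. This lets the paper add a large multiple of $(-1)^{r+1}(x-y)^{2r}$ at each step.

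You instead truncate the frequency integral at $\varepsilon$. The high-frequency piece splits into an explicit even polynomial, which converges because $2j<p$, plus a cosine average that is strictly conditionally negative definite on every finite set. The low-frequency piece is $O(\varepsilon^{2d+2-p})$ on $A-A$. It is absorbed by the uniform gap $\delta$, which you obtain from monotonicity of $Q_\varepsilon$ in $\varepsilon$ and compactness of the unit sphere in $V_0$.

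What each route buys:
\begin{itemize}
\item Your route is shorter and avoids the codimension-one and nondegeneracy bookkeeping.
\item It gives explicit coefficients $C_{2r}=c\,(-1)^{r+1}\varepsilon^{2r-p}/\bigl((2r)!\,(p-2r)\bigr)$, in which $A$ enters only through the choice of $\varepsilon$. This makes the alternating sign pattern of the $C_{2r}$ visible directly; the paper's construction has the same pattern.
\item It shows that $K$ is, on all of $\mathbb R$, a strictly conditionally negative definite kernel plus a perturbation that is small on bounded sets.
\item The paper's route confines all the analysis to one exact positivity statement on $V_d$, with no small-parameter absorption. Everything after that is exact finite-dimensional linear algebra.
\item The paper's route handles $m-1<d$ naturally, by leaving the top coefficients zero.
\item The paper's viewpoint (the sign of $Q_{|t|^p}$ on $V_d$) is the one it returns to when explaining why the intervals $(4k,4k+2)$ fail.
\end{itemize}

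One phrasing point: $\sum_i a_i e^{\ii s x_i}$ is an exponential sum with real frequencies rather than a trigonometric polynomial. Your conclusion still holds, since it is entire and not identically zero, so its zeros are isolated.
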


Before giving the proof, we keep the notation
\(A=\{x_1,\dots,x_m\}\) from the statement and introduce the following filtration by vanishing moments. 
For an even function \(f:\mathbb R\to\mathbb R\), define
\[
B_f(\boldsymbol{a},\boldsymbol{b})
:=
\sum_{i,j=1}^m a_i b_j f(x_i-x_j).\]
In particular, we write
\[
Q_f(\boldsymbol{a}):=B_f(\boldsymbol{a},\boldsymbol{a}).
\]
For each integer \(r\ge0\), define the \(r\)-th moment of \(\boldsymbol{a}=(a_1,\dots,a_m)\in\mathbb R^m\) by
\(
M_r(\boldsymbol{a}):=\sum_{i=1}^m a_i x_i^r,
\)
and set
\[
V_r:=\{\boldsymbol{a}\in\mathbb R^m:M_0(\boldsymbol{a})=M_1(\boldsymbol{a})=\cdots=M_r(\boldsymbol{a})=0\}.
\]
Thus \(V_r\) consists of coefficient vectors whose moments up to order \(r\) vanish.
In particular,
\(
V_0=\{\boldsymbol{a}\in\mathbb R^m:\sum_{i=1}^m a_i=0\}.
\)
These spaces form the decreasing filtration
\[
V_0\supseteq V_1\supseteq \cdots\supseteq V_d.
\]
The role of the correction polynomial is to make the resulting quadratic form negative definite on \(V_0\). 
We achieve this by decreasing the moment order one step at a time, passing from \(V_d\) to \(V_{d-1}\) and eventually to \(V_0\).

The first ingredient is the following elementary identity.  It shows that, on
\(V_{r-1}\), the quadratic form induced by the kernel \((x-y)^{2r}\) depends only
on the \(r\)-th moment.
\begin{lemma}\label{lem mom0}
	Let $r\ge 1$. If $\boldsymbol{a},\boldsymbol{b}\in V_{r-1}$, then
	\[
	B_{t^{2r}}(\boldsymbol{a},\boldsymbol{b})
	=
	(-1)^r\binom{2r}{r}M_r(\boldsymbol{a})M_r(\boldsymbol{b}).
	\]
	In particular:
	\begin{enumerate}
		\item[\textup{(1)}] if $\boldsymbol{a}\in V_{r-1}$ and $\boldsymbol{b}\in V_r$, then
		\(
		B_{t^{2r}}(\boldsymbol{a},\boldsymbol{b})=0;
		\)
		
		\item[\textup{(2)}] if $0\ne \boldsymbol{a}\in V_{r-1}\setminus V_r$, then
		\(
		Q_{(-1)^{r+1}t^{2r}}(\boldsymbol{a})
		=
		-\binom{2r}{r}\,M_r(\boldsymbol{a})^2<0.
		\)
	\end{enumerate}
\end{lemma}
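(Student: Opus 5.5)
The plan is to expand \((x_i-x_j)^{2r}\) with the binomial theorem and let the moment conditions kill all but one term. We have
\[
(x_i-x_j)^{2r}=\sum_{s=0}^{2r}\binom{2r}{s}(-1)^{2r-s}x_i^{s}x_j^{2r-s}
=\sum_{s=0}^{2r}(-1)^{s}\binom{2r}{s}x_i^{s}x_j^{2r-s}.
\]
Multiplying by \(a_ib_j\) and summing over \(i,j\), the double sum factors termwise:
\[
B_{t^{2r}}(\boldsymbol{a},\boldsymbol{b})=\sum_{s=0}^{2r}(-1)^s\binom{2r}{s}M_s(\boldsymbol{a})M_{2r-s}(\boldsymbol{b}).
\]

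Since \(\boldsymbol{a}\in V_{r-1}\), every term with \(s\le r-1\) vanishes. Since \(\boldsymbol{b}\in V_{r-1}\), every term with \(2r-s\le r-1\), i.e.\ \(s\ge r+1\), also vanishes. Only the term \(s=r\) survives, giving
\[
B_{t^{2r}}(\boldsymbol{a},\boldsymbol{b})=(-1)^r\binom{2r}{r}M_r(\boldsymbol{a})M_r(\boldsymbol{b}),
\]
which is the main identity.

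For (1), the hypothesis \(\boldsymbol{b}\in V_r\subseteq V_{r-1}\) lets us apply the identity, and \(M_r(\boldsymbol{b})=0\) makes the right-hand side zero.

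For (2), take \(\boldsymbol{b}=\boldsymbol{a}\). Using \(Q_{cf}=cQ_f\) with \(c=(-1)^{r+1}\), we get
\[
Q_{(-1)^{r+1}t^{2r}}(\boldsymbol{a})=(-1)^{r+1}(-1)^r\binom{2r}{r}M_r(\boldsymbol{a})^2=-\binom{2r}{r}M_r(\boldsymbol{a})^2.
\]
Because \(\boldsymbol{a}\in V_{r-1}\setminus V_r\), all moments of order below \(r\) vanish, so the failure to lie in \(V_r\) must come from \(M_r(\boldsymbol{a})\ne0\). Hence the quantity is strictly negative.

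There is no real obstacle here; the only care needed is the sign bookkeeping \((-1)^{2r-s}=(-1)^s\) and checking that the vanishing ranges \(s\le r-1\) and \(s\ge r+1\) together cover every index except \(s=r\).
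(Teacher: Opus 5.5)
Your proposal is correct and follows the paper's proof: binomial expansion, factoring the double sum into products of moments, and killing every term except $s=r$ using $M_s(\boldsymbol{a})=0$ for $s<r$ and $M_{2r-s}(\boldsymbol{b})=0$ for $s>r$. Consequences (1) and (2) then follow exactly as you describe.
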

\begin{proof}[Proof of Lemma~\ref{lem mom0}]
	Expanding $(x_i-x_j)^{2r}$ gives
	\[
	(x_i-x_j)^{2r}
	=
	\sum_{\ell=0}^{2r}(-1)^{2r-\ell}\binom{2r}{\ell}x_i^\ell x_j^{2r-\ell}.
	\]
	Therefore
	\[
	B_{t^{2r}}(\boldsymbol{a},\boldsymbol{b})
	=
	\sum_{\ell=0}^{2r}(-1)^{2r-\ell}\binom{2r}{\ell}
	M_\ell(\boldsymbol{a})M_{2r-\ell}(\boldsymbol{b}).
	\]
	Since $\boldsymbol{a},\boldsymbol{b}\in V_{r-1}$, we have
	\(
	M_\ell(\boldsymbol{a})=M_\ell(\boldsymbol{b})=0\)
	for every \(\ell<r.\)
	Hence every term vanishes except the middle term $\ell=r$, and so
	\[
	B_{t^{2r}}(\boldsymbol{a},\boldsymbol{b})
	=
	(-1)^r\binom{2r}{r}M_r(\boldsymbol{a})M_r(\boldsymbol{b}).
	\]
	The two stated consequences now follow directly from this formula. If \(\boldsymbol{b}\in V_r\), then \(M_r(\boldsymbol{b})=0\), and hence \( B_{t^{2r}}(\boldsymbol{a},\boldsymbol{b})=0 \)
    for every \(\boldsymbol{a}\in V_{r-1}\). 
    On the other hand, if \(0\ne\boldsymbol{a}\in V_{r-1}\setminus V_r\), then \(M_r(\boldsymbol{a})\ne0\). Therefore \[ Q_{(-1)^{r+1}t^{2r}}(\boldsymbol{a}) = (-1)^{r+1}B_{t^{2r}}(\boldsymbol{a},\boldsymbol{a}) = -\binom{2r}{r}M_r(\boldsymbol{a})^2<0. \]
\end{proof}

The second ingredient is the positivity of $|x-y|^p$ on $V_d$ when
$d=\lfloor p/2\rfloor$ is odd. Recall that \(Q_f(\boldsymbol{a}):=B_f(\boldsymbol{a},\boldsymbol{a}).\)

\begin{lemma}\label{lem odd d}
	Let $d\ge 0$ be an integer, and let $p$ satisfy
	\(
	2d<p<2d+2.
	\)
	If $d$ is odd, then the quadratic form $Q_{|t|^p}$ is positive
	definite on $V_d$, that is,
	\(
	Q_{|t|^p}(\boldsymbol{a})>0\) for every 
	\(\boldsymbol{0}\ne \boldsymbol{a}\in V_d.
	\)
\end{lemma}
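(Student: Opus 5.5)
We will use the classical integral representation of $|t|^p$ for non-integer $p/2$. For $2d<p<2d+2$, the function $\cos u-\sum_{j=0}^{d}(-1)^j u^{2j}/(2j)!$ is $O(u^{2d+2})$ near $0$ and $O(u^{2d})$ at infinity, so the integral below converges, and we have
\[
|t|^p=c_{p}\int_0^\infty \Bigl(\cos(ut)-\sum_{j=0}^{d}\frac{(-1)^j(ut)^{2j}}{(2j)!}\Bigr)\frac{du}{u^{p+1}}.
\]
The constant $c_p$ is obtained by the substitution $s=u|t|$:
\[
c_p^{-1}=\int_0^\infty\Bigl(\cos s-\sum_{j\le d}\frac{(-1)^js^{2j}}{(2j)!}\Bigr)s^{-p-1}\,ds.
\]
Its sign is determined by repeated integration by parts, or equivalently by analytic continuation of $\int_0^\infty \cos s\, s^{-p-1}ds=\Gamma(-p)\cos(\pi p/2)$. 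The sign of $\Gamma(-p)\cos(\pi p/2)$ for $p\in(2d,2d+2)$ is $(-1)^{d+1}$. Indeed, $\Gamma(-p)$ has sign $(-1)^{\lceil p\rceil}$, and combining this with the sign of $\cos(\pi p/2)$ in both subintervals gives the sign $(-1)^{d+1}$ throughout. For odd $d$ this is positive, so $c_p>0$. This sign computation is the main step to check carefully. As a sanity check, $d=1$ and $p=3$ gives $|t|^3=c\int_0^\infty(\cos ut-1+u^2t^2/2)u^{-4}du$ with a positive integrand, since $\cos s-1+s^2/2\ge0$. In general we will verify the sign by writing $c_p^{-1}$ through $d+1$ integrations by parts as a positive multiple of $(-1)^{d+1}\int_0^\infty(\cos s - 1)\,s^{-(p-2d)-1}ds$. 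The last integral is negative, so the multiple has sign $(-1)^{d+1}\cdot(-1)=(-1)^d$; we must track the signs and fix conventions consistently.

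Next, fix $\boldsymbol a\in V_d$, $\boldsymbol a\ne\boldsymbol 0$. Substituting $t=x_i-x_j$ into the representation and summing against $a_ia_j$ gives
\[
Q_{|t|^p}(\boldsymbol a)=c_p\int_0^\infty\Bigl(Q_{\cos(ut)}(\boldsymbol a)-\sum_{j=0}^d\frac{(-1)^ju^{2j}}{(2j)!}Q_{t^{2j}}(\boldsymbol a)\Bigr)u^{-p-1}du.
\]
Here we use finiteness of the sum to swap it with the integral. Since $\boldsymbol a\in V_d\subseteq V_{j}$ for all $j\le d$, Lemma~\ref{lem mom0} gives $Q_{t^{2j}}(\boldsymbol a)=(-1)^j\binom{2j}{j}M_j(\boldsymbol a)^2=0$ for $j\le d$. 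The polynomial terms therefore drop out, and
\[
Q_{\cos(ut)}(\boldsymbol a)=\Bigl|\sum_i a_i e^{\mathrm{i}ux_i}\Bigr|^2\ge0.
\]
Hence $Q_{|t|^p}(\boldsymbol a)=c_p\int_0^\infty|\hat a(u)|^2u^{-p-1}du\ge0$, where the integral converges because the moment conditions make $\hat a(u)=O(u^{d+1})$ near $0$.

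Finally, strictness follows because the $x_i$ are distinct. The function $\hat a(u)=\sum_i a_ie^{\mathrm i ux_i}$ is a nonzero exponential polynomial, since the characters $e^{\mathrm iux_i}$ are linearly independent. It therefore vanishes only on a discrete set, so the integral is strictly positive and $Q_{|t|^p}(\boldsymbol a)>0$. The only delicate point in the argument is the sign of $c_p$, and this is exactly where the oddness of $d$ enters.
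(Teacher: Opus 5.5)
Your proof is the paper's proof: the same subtracted-Taylor integral representation of $|t|^p$, the same use of Lemma~\ref{lem mom0} to kill the polynomial terms on $V_d$, positivity from $\sum_{i,j}a_ia_j\cos(u(x_i-x_j))=\bigl|\sum_i a_ie^{\mathrm{i}ux_i}\bigr|^2$, and strictness because this exponential sum is not identically zero. Your $\Gamma$-function computation of the sign of $c_p^{-1}$ is correct. It gives sign $(-1)^{d+1}$ on $(2d,2d+2)$, which matches your $p=3$ check. It does rely on the standard fact, which you should cite, that the subtracted integral is the analytic continuation of $\int_0^\infty s^{z-1}\cos s\,ds=\Gamma(z)\cos(\pi z/2)$ to the strip $-2d-2<\operatorname{Re}z<-2d$.

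The one thing to repair is your integration-by-parts cross-check. As written it contradicts the $\Gamma$ computation and yields the sign $(-1)^d$. For even $d$ the form really is negative definite on $V_d$, so leaving this contradiction unresolved puts the crucial sign in doubt.

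Write $R_d(u):=\cos u-\sum_{j=0}^d(-1)^ju^{2j}/(2j)!$. You need $2d$ integrations by parts, not $d+1$. For $0\le j\le 2d-1$, each step
\[
\int_0^\infty R_d^{(j)}(u)u^{j-1-p}\,du=\frac{1}{p-j}\int_0^\infty R_d^{(j+1)}(u)u^{j-p}\,du
\]
has a positive factor. The boundary terms $R_d^{(j)}(u)u^{j-p}$ vanish at $0$ and at $\infty$ because $2d<p<2d+2$. Finally, $R_d^{(2d)}(u)=(-1)^d(\cos u-1)$. Therefore
\[
c_p^{-1}=\frac{(-1)^d}{p(p-1)\cdots(p-2d+1)}\int_0^\infty\frac{\cos u-1}{u^{1+(p-2d)}}\,du,
\]
whose sign is $(-1)^d\cdot(-1)=(-1)^{d+1}$. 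So the prefactor is $(-1)^d$, not $(-1)^{d+1}$.

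This is exactly how the paper fixes the sign, and it is self-contained. With this correction, or with the $\Gamma$ argument alone and the erroneous remark dropped, your proof is complete.
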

\begin{proof}[Proof of Lemma~\ref{lem odd d}]
	Define
	\[
	R_d(u):=\cos u-\sum_{\ell=0}^d(-1)^\ell\frac{u^{2\ell}}{(2\ell)!}.
	\]
	Since
	\(
	\cos u=\sum_{\ell=0}^{\infty}(-1)^\ell\frac{u^{2\ell}}{(2\ell)!},
	\)
	we have $R_d(u)=O(u^{2d+2})$ as $u\to 0$.
	Also, $R_d(u)=O(u^{2d})$ as $|u|\to\infty$. Therefore, the integral
	\[
	J_{p,d}:=\int_0^\infty \frac{R_d(u)}{u^{1+p}}\,du
	\]
	converges absolutely for $2d<p<2d+2$.
	
	Integrating by parts \(2d\) times, we claim that all boundary terms vanish.
Indeed, at the \((j+1)\)-st integration by parts, where \(0\le j\le 2d-1\),
the boundary term is a constant multiple of
\(
R_d^{(j)}(u)u^{j-p}.
\)
As \(u\to0\), we have
\[
R_d^{(j)}(u)=O(u^{2d+2-j}),
\]
and hence
\[
R_d^{(j)}(u)u^{j-p}=O(u^{2d+2-p})=o(1),
\]
because \(p<2d+2\).  As \(u\to\infty\), we have
\[
R_d^{(j)}(u)=O(u^{2d-j}),
\]
and hence
\[
R_d^{(j)}(u)u^{j-p}=O(u^{2d-p})=o(1),
\]
because \(p>2d\). Thus all boundary terms vanish. We therefore obtain
\[
J_{p,d}
=
\frac{(-1)^d}{p(p-1)\cdots(p-2d+1)}
\int_0^\infty \frac{\cos u-1}{u^{1+(p-2d)}}\,du.
\]
	As $\cos u-1\le 0$, we have
	\[
	(-1)^{d+1}J_{p,d}>0.
	\]
	Set
	\(
	c_{p,d}:=\frac{1}{|J_{p,d}|}>0.
	\)
	Then by the change of variables $u=t|x|$, for every $x\in\mathbb R$, we have
	\begin{equation}\label{eq xp}
		|x|^p
		=
		c_{p,d}(-1)^{d+1}
		\int_0^\infty
		\frac{
			R_d(t|x|)
		}{t^{1+p}}\,dt
		=
		c_{p,d}(-1)^{d+1}
		\int_0^\infty
		\frac{
			\cos(tx)-\sum_{\ell=0}^d(-1)^\ell (tx)^{2\ell}/(2\ell)!
		}{t^{1+p}}\,dt.
	\end{equation}
	
	For $\boldsymbol{a}=(a_1,\dots,a_m)\in V_d$, and \(\ell=0,1,\ldots,d,\) we have
	\[
	M_\ell(\boldsymbol{a})=\sum_{i=1}^m a_i x_i^\ell=0.
	\]
	
	Applying \eqref{eq xp} to $x_i-x_j$, summing over $i,j$, and using
	Lemma~\ref{lem mom0} to eliminate all polynomial terms, we obtain
	\[
	Q_{|t|^p}(\boldsymbol{a})
	=
	c_{p,d}(-1)^{d+1}
	\int_0^\infty
	\frac{
		\sum_{i,j=1}^m a_i a_j \cos(t(x_i-x_j))
	}{t^{1+p}}\,dt.
	\]
	By Euler's formula $e^{\ii \theta}=\cos \theta+\ii \sin \theta$, we have
	\[
	\sum_{i,j=1}^m a_i a_j e^{\ii t(x_i-x_j)}
	=
	\left(\sum_{i=1}^m a_i e^{\ii tx_i}\right)
	\left(\sum_{j=1}^m a_j e^{-\ii tx_j}\right)
	=
	\left|\sum_{i=1}^m a_i e^{\ii tx_i}\right|^2.
	\]
	Taking real parts gives
	\[
	\sum_{i,j=1}^m a_i a_j \cos(t(x_i-x_j))
	=
	\left|\sum_{i=1}^m a_i e^{\ii tx_i}\right|^2.
	\]
	It follows that
	\begin{equation}\label{eq Qp}
		Q_{|t|^p}(\boldsymbol{a})
		=
		c_{p,d}(-1)^{d+1}
		\int_0^\infty
		\frac{
			\left|\sum_{i=1}^m a_i e^{\ii tx_i}\right|^2
		}{t^{1+p}}\,dt.
	\end{equation}
	Because $d$ is odd, we have $(-1)^{d+1}=1$, so the right hand side is nonnegative.
	
	It remains to prove strict positivity. Let
	\[
	F_{\boldsymbol{a}}(t):=\sum_{i=1}^m a_i e^{\ii tx_i}.
	\]
	This is an entire function. If \(F_{\boldsymbol{a}}\equiv0\), then all derivatives at \(0\) vanish, so
\[
\sum_{i=1}^m a_i x_i^r=0
\]
for every \(r\ge0\). In particular, this holds for \(0\le r\le m-1\). Since \(x_1,\dots,x_m\) are distinct, the Vandermonde matrix is invertible, and hence \(a_1=\cdots=a_m=0\). Thus \(F_{\boldsymbol{a}}\not\equiv0\) whenever \(\boldsymbol{a}\ne\boldsymbol{0}\). The integrand in \eqref{eq Qp} is therefore not identically zero, and the integral is strictly positive. This proves the lemma.
\end{proof}

We are now ready to prove Lemma~\ref{lem cnd}.

\begin{proof}[Proof of Lemma~\ref{lem cnd}]
	If \(m=1\), the conclusion is trivial, so assume \(m\ge2\). Set
\(
s:=\min\{d,m-1\}.
\)
We first show that \(V_r\) has codimension \(1\) inside \(V_{r-1}\).
\begin{claim}\label{claim:CodimOne}
    For each
$r\in\{1,\dots,s\}$, 
\(
\dim(V_{r-1}/V_r)=1.
\)
\end{claim}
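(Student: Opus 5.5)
The claim concerns the moment maps \(M_r\colon V_{r-1}\to\mathbb R\). By definition \(V_r=V_{r-1}\cap\ker M_r\), so \(V_r\) is the kernel of the restriction of \(M_r\) to \(V_{r-1}\). This gives \(\dim(V_{r-1}/V_r)\le 1\) at once. It therefore suffices to show that \(M_r\) does not vanish identically on \(V_{r-1}\) for every \(1\le r\le s\); equivalently, that \(V_r\subsetneq V_{r-1}\).

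To do this I will compute dimensions directly. For \(0\le r\le m-1\), \(V_r\) is the kernel of the \((r+1)\times m\) matrix with rows \((x_1^\ell,\dots,x_m^\ell)\) for \(\ell=0,\dots,r\). This matrix contains the square Vandermonde block on the columns \(x_1,\dots,x_{r+1}\). Since the \(x_i\) are distinct and \(r+1\le m\), that block is invertible, so the matrix has full row rank \(r+1\). Hence \(\dim V_r=m-r-1\) for all \(0\le r\le m-1\).

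Since \(r\le s\le m-1\), both \(r-1\) and \(r\) lie in the range \(\{0,\dots,m-1\}\). Therefore
\[
\dim(V_{r-1}/V_r)=(m-r)-(m-r-1)=1.
\]
Note that the case \(r-1=0\) is included, with \(\dim V_0=m-1\).

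The only point needing care is the restriction \(r\le m-1\), which guarantees the Vandermonde block exists. This is exactly why \(s=\min\{d,m-1\}\) is used. Otherwise the argument is routine linear algebra. As an optional explicit witness, one can take \(\boldsymbol a\) supported on \(x_1,\dots,x_{r+1}\), given by the divided-difference weights \(a_i=\prod_{j\ne i,\,j\le r+1}(x_i-x_j)^{-1}\). These weights annihilate all moments of order less than \(r\) and have \(M_r(\boldsymbol a)=1\).
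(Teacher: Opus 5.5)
Your proof is correct and is essentially the paper's argument. Both reduce the claim to the linear independence of the moment functionals $M_0,\dots,M_r$ for $r\le s\le m-1$. You get this from a nonvanishing square Vandermonde minor, computing $\dim V_r=m-r-1$ explicitly, while the paper uses the equivalent fact that a nonzero polynomial of degree at most $s\le m-1$ cannot vanish at the $m$ distinct points $x_1,\dots,x_m$.
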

\begin{poc}
Notice that the
linear functionals \(M_0,\dots,M_s\) are linearly independent on \(\mathbb R^m\).
To see this, suppose that
\(
\sum_{r=0}^s c_rM_r=0.
\)
Then the polynomial
\(
q(x):=\sum_{r=0}^s c_rx^r
\)
vanishes at the distinct real numbers \(x_1,\dots,x_m\). Since \(s\le m-1\), this
forces \(q\equiv0\), and hence \(c_0=\cdots=c_s=0\). Therefore each time we pass
from \(V_{r-1}\) to \(V_r\), we impose exactly one new independent linear
condition, namely \(M_r(\boldsymbol a)=0\). Thus
\(
\dim(V_{r-1}/V_r)=1
\)
for every \(1\le r\le s\).
\end{poc}
Also, if \(s<m-1\), then necessarily \(s=d\), while if
\(s=m-1\le d\), then \(V_s=\{\boldsymbol0\}\).
	
	We shall construct the correcting coefficients by descending induction. For \(1\le r\le s\), define
\[
K_r(x,y):=\sum_{j=r}^s C_{2j}(A,p)(x-y)^{2j}-|x-y|^p.
\]
We will choose the coefficients so that
\begin{equation}\label{eq:negative_on_Vr_minus_1}
Q_{K_r}(\boldsymbol{a})<0
\end{equation}
for every \(0\ne\boldsymbol{a}\in V_{r-1}\).
	
	Start with
\(
K_{s+1}(x,y):=-|x-y|^p.
\)
We claim that \(Q_{K_{s+1}}\) is negative definite on \(V_s\). Indeed, if \(s=d\), this follows from Lemma~\ref{lem odd d}, since \(d=2k+1\) is odd. If \(s=m-1<d\), then \(V_s=\{\boldsymbol{0}\}\).

	Fix $r\in\{1,\dots,s\}$, and assume that the coefficients
	$C_{2(r+1)}(A,p),\dots,C_{2s}(A,p)$ have already been chosen so that
	\(
	Q_{K_{r+1}}(\boldsymbol{a})<0\) for all
	\(\boldsymbol{0}\ne \boldsymbol{a}\in V_r.
	\)
	Thus the bilinear form \(B_{K_{r+1}}\) is nondegenerate on \(V_r\): if
\(B_{K_{r+1}}(\boldsymbol u,\boldsymbol v)=0\) for every
\(\boldsymbol v\in V_r\), then in particular
\(Q_{K_{r+1}}(\boldsymbol u)=0\), forcing \(\boldsymbol u=0\).
	
	Consider the subspace
	\[
	W_r:=\{\boldsymbol{w}\in V_{r-1}: B_{K_{r+1}}(\boldsymbol{w},\boldsymbol{v})=0\ \text{for all }\boldsymbol{v}\in V_r\}.
	\]
	Since $B_{K_{r+1}}|_{V_r}$ is nondegenerate, we have
	\(
	W_r\cap V_r=\{\boldsymbol{0}\}.
	\)
	Consider the linear map
\[
\Phi:V_{r-1}\longrightarrow V_r^*
\]
defined as follows: for each \(\boldsymbol w\in V_{r-1}\), the image
\(\Phi(\boldsymbol w)\) is the linear functional on \(V_r\) given by
\[
\Phi(\boldsymbol w)(\boldsymbol v)
=
B_{K_{r+1}}(\boldsymbol w,\boldsymbol v)
\]
for every \(\boldsymbol v\in V_r\). Since \(B_{K_{r+1}}\) is nondegenerate on
\(V_r\), the restriction of \(\Phi\) to \(V_r\) is an isomorphism from \(V_r\)
onto \(V_r^*\). In particular,
\[
\dim(\operatorname{im}\Phi)=\dim V_r.
\]
	Moreover,
	\(
	\ker(\Phi)=W_r.
	\)
	Hence, by rank-nullity and Claim~\ref{claim:CodimOne},
	\[
	\dim W_r=\dim V_{r-1}-\dim V_r=1,
	\]
	and therefore
	\(
	V_{r-1}=W_r\oplus V_r.
	\)
	
	Choose a nonzero vector $\boldsymbol{w}_r\in W_r$. Since $W_r\cap V_r=\{\boldsymbol{0}\}$, we have
	$\boldsymbol{w}_r\notin V_r$, and so $M_r(\boldsymbol{w}_r)\ne 0$. By
	Lemma~\ref{lem mom0},
	\begin{equation}\label{eq Qneg}
		Q_{(-1)^{r+1}t^{2r}}(\boldsymbol{w}_r)
		=
		-\binom{2r}{r}M_r(\boldsymbol{w}_r)^2<0.
	\end{equation}
	Also, by the same lemma, for every $\boldsymbol{w}\in W_r\subseteq V_{r-1}$ and every $\boldsymbol{v}\in V_r$,
	\(
	B_{t^{2r}}(\boldsymbol{w},\boldsymbol{v})=0.
	\)
	
	Since $\boldsymbol{w}_r$ is fixed, by \eqref{eq Qneg}, we can choose $\lambda_r>0$ so large that
	\[
	Q_{K_{r+1}}(\boldsymbol{w}_r)+\lambda_r\,Q_{(-1)^{r+1}t^{2r}}(\boldsymbol{w}_r)<0.
	\]
	Set
	\(
	C_{2r}(A,p):=\lambda_r(-1)^{r+1},
	\)
	and define
	\[
	K_r(x,y):=K_{r+1}(x,y)+\lambda_r(-1)^{r+1}(x-y)^{2r}.
	\]
	
	We claim that $Q_{K_r}$ is negative definite on $V_{r-1}$.
	Let $\boldsymbol{0}\ne \boldsymbol{u}\in V_{r-1}$. Write uniquely
	\[
\boldsymbol{u}=\boldsymbol{w}+\boldsymbol{v}
	\]
    for \(\boldsymbol{w}\in W_r,\ \boldsymbol{v}\in V_r.\)
	Since $W_r$ is $B_{K_{r+1}}$-orthogonal to $V_r$, and
	$B_{t^{2r}}(W_r,V_r)=0$, we have
	\[
	Q_{K_r}(\boldsymbol{u})=Q_{K_r}(\boldsymbol{w})+Q_{K_r}(\boldsymbol{v}).
	\]
	Now:
	\begin{itemize}
		\item if $\boldsymbol{v}\ne \boldsymbol{0}$, then
		\(
		Q_{K_r}(\boldsymbol{v})=Q_{K_{r+1}}(\boldsymbol{v})<0,
		\)
		because $\boldsymbol{v}\in V_r$ and $Q_{t^{2r}}(\boldsymbol{v})=0$ by
		Lemma~\ref{lem mom0};
		
		\item if $\boldsymbol{w}\ne \boldsymbol{0}$, then $\boldsymbol{w}=\alpha \boldsymbol{w}_r$ for some $\alpha\ne 0$, and hence
		\[
		Q_{K_r}(\boldsymbol{w})
		=
		\alpha^2\Bigl(
		Q_{K_{r+1}}(\boldsymbol{w}_r)+\lambda_r\,Q_{(-1)^{r+1}t^{2r}}(\boldsymbol{w}_r)
		\Bigr)<0.
		\]
	\end{itemize}
	Therefore $Q_{K_r}(\boldsymbol{u})<0$ whenever $\boldsymbol{u}\ne \boldsymbol{0}$, and so
	\eqref{eq:negative_on_Vr_minus_1} holds. This completes the descending induction.
	
	At the end, we obtain coefficients $C_{2r}(A,p)$ for $1\le r\le s$ such that
	\(
	Q_{K_1}(\boldsymbol{a})<0\) for all
	\(\boldsymbol{0}\ne \boldsymbol{a}\in V_0.
	\)
	Finally, set
	\(
	C_{2r}(A,p):=0\) for
	\(s<r\le d\).
	Then
	\[
	K(x,y):=\sum_{r=1}^d C_{2r}(A,p)\,(x-y)^{2r}-|x-y|^p
	\]
	satisfies
	\(
	Q_K(\boldsymbol{a})<0\)
	for all \(\boldsymbol{0}\ne \boldsymbol{a}\in V_0.\)
	Since
	\[
	V_0=\Bigl\{\boldsymbol{a}\in\mathbb R^m:\sum_{i=1}^m a_i=0\Bigr\},
	\]
	this exactly says that $K$ is strictly conditionally negative definite on $A$.
\end{proof}

We now pass from the polynomial correction to the trigonometric correction required in Theorem~\ref{thm:cnd}.

\begin{proof}[Proof of Theorem~\ref{thm:cnd}]
If \(|A|=1\), the conclusion is trivial. We assume \(|A|\ge2\).

We first handle the case \(d=1\). If \(p=2\), take \(b_0=1\) and \(E_\theta\equiv0\). If \(2<p<4\), then Lemma~\ref{lem cnd} gives a coefficient \(a_2\) such that the kernel
\[
a_2(x-y)^2-|x-y|^p
\]
is conditionally negative definite on \(A\). In this case take \(b_0=a_2\) and \(E_\theta\equiv0\). Since there are no oscillatory terms when \(d=1\), the sign assertion is vacuous. Thus we may assume \(d\ge2\).

We first choose an even polynomial correction. If \(p\in(4k+2,4k+4)\), then Lemma~\ref{lem cnd} gives real coefficients \(a_2,a_4,\dots,a_{2d}\) such that the kernel
\[
K_{\mathrm{poly}}(x,y)
:=
\sum_{r=1}^d a_{2r}(x-y)^{2r}-|x-y|^p
\]
is conditionally negative definite on \(A\). If \(p\in\{4k+2,4k+4\}\), then \(p=2d\). In this case take
\[
(a_2,a_4,\dots,a_{2d-2},a_{2d})=(0,0,\dots,0,1),
\]
so that
\[
K_{\mathrm{poly}}(x,y)=(x-y)^{2d}-|x-y|^{2d}=0.
\]
Thus in all cases \(K_{\mathrm{poly}}\) is conditionally negative definite on \(A\).

We then approximate the polynomial part by a finite cosine sum. For \(\theta>0\), define \(b_1(\theta),\dots,b_{d-1}(\theta)\) as the unique solution of
\begin{equation}\label{eq bi}
\sum_{h=1}^{d-1} b_h(\theta)h^{2r}
=
(-1)^{r+1}\frac{(2r)!}{\theta^{2r}}a_{2r}
\end{equation}
for \( r=2,\ldots,d.\)
The coefficient matrix \((h^{2r})_{2\le r\le d,1\le h\le d-1}\) is invertible. Indeed, after writing \(\lambda_h=h^2\), it becomes a Vandermonde matrix in the distinct numbers \(\lambda_1,\dots,\lambda_{d-1}\), up to multiplication of columns by positive factors. Define
\begin{equation}\label{eq b0}
b_0(\theta):=a_2-\frac{\theta^2}{2}\sum_{h=1}^{d-1}b_h(\theta)h^2
\end{equation}
and set
\[
T_\theta(t):=b_0(\theta)t^2+\sum_{h=1}^{d-1}b_h(\theta)(1-\cos(h\theta t)).
\]

Let
\[
\Delta:=A-A=\{x-y:x,y\in A\}.
\]
Since \(\Delta\) is finite, all estimates below are uniform for \(t\in\Delta\). For each \( h\in\{1,\dots,d-1\}\), Taylor's formula gives
\[
1-\cos(h\theta t)
=
\sum_{r=1}^d (-1)^{r+1}\frac{(h\theta t)^{2r}}{(2r)!}
+R_h(\theta,t),
\]
where \(R_h(\theta,t)=O(\theta^{2d+2})\) uniformly for \(t\in\Delta\).

Let
\[
s:=\max\{r\in\{1,\dots,d\}:a_{2r}\ne0\}.
\]
If \(s=1\), then the right hand side of \eqref{eq bi} is zero for every \(r=2,\dots,d\), and hence \(b_h(\theta)=0\) for all \(h\). In this case \(T_\theta(t)=a_2t^2\), and the approximation below is exact. We may therefore assume \(s\ge2\). Since \(a_{2r}=0\) for all \(r>s\), equation \eqref{eq bi} gives
\(
b_h(\theta)=O(\theta^{-2s})
\)
for \(1\le h\le d-1\). Hence, uniformly for \(t\in\Delta\),
\[
\sum_{h=1}^{d-1}b_h(\theta)R_h(\theta,t)=O(\theta^{2d+2-2s})=O(\theta^2).
\]
By \eqref{eq bi} and \eqref{eq b0}, the Taylor coefficients of \(T_\theta(t)\) up to degree \(2d\) agree with those of \(\sum_{r=1}^d a_{2r}t^{2r}\). Therefore, defining
\[
E_\theta(x,y):=\sum_{r=1}^d a_{2r}(x-y)^{2r}-T_\theta(x-y),
\]
we obtain
\[
|E_\theta(x,y)|\le C_p(A)\theta^2
\]
for all \(x,y\in A\), after increasing \(C_p(A)\) if necessary. Since \(T_\theta\) is even, \(E_\theta\) is symmetric.

Now define
\[
K_\theta(x,y)
:=
b_0(\theta)|x-y|^2+\sum_{h=1}^{d-1}b_h(\theta)(1-\cos(h\theta(x-y)))-|x-y|^p+E_\theta(x,y).
\]
By the definition of \(E_\theta\), we have
\[
K_\theta(x,y)=K_{\mathrm{poly}}(x,y)
\]
for all \(x,y\in A\). Hence \(K_\theta\) is conditionally negative definite on \(A\).

It remains to control the signs of the coefficients \(b_1(\theta),\dots,b_{d-1}(\theta)\). If \(s=1\), then all these coefficients are zero, so there is nothing to prove. Assume \(s\ge2\). Write \eqref{eq bi} in matrix form as
\[
M\boldsymbol b(\theta)=\boldsymbol u(\theta),
\]
where
\(
\boldsymbol b(\theta):=(b_1(\theta),\dots,b_{d-1}(\theta))^{\mathsf T}
\)
and
\[
\boldsymbol u(\theta):=
\left((-1)^{r+1}\frac{(2r)!}{\theta^{2r}}a_{2r}\right)_{r=2}^d.
\]
Since \(a_{2r}=0\) for \(r>s\) and \(a_{2s}\ne0\), we have
\[
\boldsymbol u(\theta)
=
(-1)^{s+1}(2s)!a_{2s}\theta^{-2s}\boldsymbol e_{s-1}
+O(\theta^{-2s+2}),
\]
where \(\boldsymbol e_{s-1}\) denotes the \((s-1)\)-st standard basis vector of \(\mathbb R^{d-1}\). Therefore
\begin{equation}\label{eq bim}
b_h(\theta)
=
(-1)^{s+1}(2s)!a_{2s}(M^{-1})_{h,s-1}\theta^{-2s}
+O(\theta^{-2s+2}).
\end{equation}

We now determine the sign pattern of \(M^{-1}\). For \(1\le h\le d-1\), put
\(
\lambda_h:=h^2.
\)
Recall that the rows of \(M\) are indexed by \(r=2,\dots,d\), while the columns
are indexed by \(h=1,\dots,d-1\), and
\[
M_{r,h}=h^{2r}=\lambda_h^r.
\]
Equivalently, after relabeling the row \(r=q+1\), where \(1\le q\le d-1\), the
corresponding entry is
\[
h^{2r}=h^{2(q+1)}=\lambda_h^{q+1}=\lambda_h^{q-1}\lambda_h^2.
\]
Thus, after this relabeling of the rows,
\[
M=VD,
\]
where
\(
V=(\lambda_h^{q-1})_{1\le q,h\le d-1}\)
and
\(
D=\operatorname{diag}(\lambda_1^2,\dots,\lambda_{d-1}^2).
\)

Since \(D\) has positive diagonal entries, \(M^{-1}=D^{-1}V^{-1}\) has the same
sign pattern as \(V^{-1}\). Write \(V^{-1}=(\alpha_{h,q})_{1\le h,q\le d-1}\).
The \(h\)-th Lagrange interpolation polynomial for the nodes
\(\lambda_1,\dots,\lambda_{d-1}\) is
\[
L_h(X):=
\prod_{\substack{1\le j\le d-1\\ j\ne h}}
\frac{X-\lambda_j}{\lambda_h-\lambda_j}.
\]
Since the rows of \(V\) correspond to the monomials
\(1,X,\dots,X^{d-2}\), we may write
\[
L_h(X)
=
\alpha_{h,1}+\alpha_{h,2}X+\cdots+\alpha_{h,d-1}X^{d-2}.
\]
Thus \(\alpha_{h,q}\) is the coefficient of \(X^{q-1}\) in \(L_h(X)\). The
coefficient of \(X^{q-1}\) in
\[
\prod_{\substack{1\le j\le d-1\\ j\ne h}}(X-\lambda_j)
\]
has sign \((-1)^{d-1-q}\), because all \(\lambda_j\) are positive. On the other
hand,
\[
\operatorname{sgn}\left(\prod_{j\ne h}(\lambda_h-\lambda_j)\right)
=
(-1)^{d-1-h}.
\]
Therefore
\[
\operatorname{sgn}(\alpha_{h,q})
=
(-1)^{d-1-q}(-1)^{d-1-h}
=
(-1)^{h+q}.
\]
It follows that
\[
\operatorname{sgn}((M^{-1})_{h,q})
=
\operatorname{sgn}((V^{-1})_{h,q})
=
(-1)^{h+q}.
\]

Applying this with \(q=s-1\) in \eqref{eq bim}, we get, for all sufficiently small \(\theta\),
\[
\operatorname{sgn}(b_h(\theta))
=
\operatorname{sgn}(a_{2s})(-1)^{s+1}(-1)^{h+s-1}
=
\operatorname{sgn}(a_{2s})(-1)^h.
\]
Thus the nonzero coefficients \(b_1(\theta),\dots,b_{d-1}(\theta)\) have alternating signs.

If \(p\in[4k+2,4k+4)\), then \(d=2k+1\), so there are \(d-1=2k\) oscillatory coefficients, and an alternating sign pattern gives at most \(k\) positive ones. If \(p=4k+4\), then \(d=2k+2\), \(s=d\), and \(a_{2s}=a_{2d}=1\). Hence positive signs can occur only for even \(h\), namely among \(h=2,4,\dots,2k\), so again there are at most \(k\) positive coefficients.

Finally, choose \(\theta_0=\theta_0(p,A)>0\) sufficiently small so that both the error estimate above and the sign conclusions hold for every \(0<\theta<\theta_0\). This proves the theorem.
\end{proof}

\subsubsection{Proofs of remaining lemmas}
We finish this subsection by proving the three elementary algebraic lemmas used in the rank argument.
\begin{proof}[Proof of Lemma~\ref{lem:cnd posit}]
Let
\[
M:=\bigl(K(x_i,x_j)\bigr)_{i,j=1}^m.
\]
For any \(\boldsymbol{v}\in\mathbb R^m\), set
\(
\boldsymbol{u}:=C_m\boldsymbol{v}.
\)
Since \(C_m\) is the orthogonal projection onto
\[
\boldsymbol{1}^{\perp}:=\{\boldsymbol{w}\in\mathbb R^m:\langle \boldsymbol{w},\boldsymbol{1}\rangle=0\},
\]
we have \(\boldsymbol{u}\in\boldsymbol{1}^{\perp}\). Therefore, by the conditional negative definiteness of \(K\),
\[
\boldsymbol{v}^{\mathsf T}(-C_mMC_m)\boldsymbol{v}
=
-\boldsymbol{u}^{\mathsf T}M\boldsymbol{u}
\ge 0.
\]
Thus \(-C_mMC_m\) is positive semidefinite.
\end{proof}

\begin{proof}[Proof of Lemma~\ref{lem:rank2}]
We show that \(G\) is positive definite on the \((m-1)\)-dimensional subspace \(\boldsymbol{1}^{\perp}\). Let
\(
\boldsymbol{v}\in\boldsymbol{1}^{\perp}\setminus\{\boldsymbol{0}\}.
\)
Since \(C_m\) is the orthogonal projection onto \(\boldsymbol{1}^{\perp}\), we have \(C_m\boldsymbol{v}=\boldsymbol{v}\). Hence
\[
\boldsymbol{v}^{\mathsf T}G\boldsymbol{v}
=
\boldsymbol{v}^{\mathsf T}NN^{\mathsf T}\boldsymbol{v}
+\boldsymbol{v}^{\mathsf T}C_m\boldsymbol{v}
+\boldsymbol{v}^{\mathsf T}E\boldsymbol{v}
=
\|N^{\mathsf T}\boldsymbol{v}\|_2^2+\|\boldsymbol{v}\|_2^2+\boldsymbol{v}^{\mathsf T}E\boldsymbol{v}.
\]
Since \(\lambda_{\min}(E)>-1\), we have
\[
\boldsymbol{v}^{\mathsf T}E\boldsymbol{v}
\ge
\lambda_{\min}(E)\|\boldsymbol{v}\|_2^2.
\]
It follows that
\[
\boldsymbol{v}^{\mathsf T}G\boldsymbol{v}
\ge
\|N^{\mathsf T}\boldsymbol{v}\|_2^2+
(1+\lambda_{\min}(E))\|\boldsymbol{v}\|_2^2
>0.
\]
Thus \(G\) is positive definite on \(\boldsymbol{1}^{\perp}\). Consequently, \(G\) has rank at least \(\dim\boldsymbol{1}^{\perp}=m-1\), as desired.
\end{proof}

\begin{proof}[Proof of Lemma~\ref{lem:rank d}]
Let
\[
H:=\bigl(\cos(r\theta(u_i-u_j))\bigr)_{i,j=1}^m.
\]
Since
\(
D=J_m-H
\)
and \(C_mJ_m=J_mC_m=0\), we have
\[
-C_mDC_m=C_mHC_m.
\]
It is therefore enough to show that \(H\) is positive semidefinite and has rank at most \(2\).

Using
\(
\cos(\alpha-\beta)=\cos\alpha\cos\beta+\sin\alpha\sin\beta,
\)
we can write
\[
H=\boldsymbol{c}\boldsymbol{c}^{\mathsf T}+\boldsymbol{s}\boldsymbol{s}^{\mathsf T},
\]
where
\(
\boldsymbol{c}:=(\cos(r\theta u_1),\dots,\cos(r\theta u_m))^{\mathsf T}
\)
and
\(
\boldsymbol{s}:=(\sin(r\theta u_1),\dots,\sin(r\theta u_m))^{\mathsf T}.
\)
Hence \(H\) is positive semidefinite and \(\operatorname{rank}(H)\le2\). Since \(C_mHC_m\) is a compression of \(H\), it is also positive semidefinite and has rank at most \(2\). Therefore \(-C_mDC_m\) is positive semidefinite and has rank at most \(2\).
\end{proof}

\subsection{Affine independence and nonregular examples for \(2<p<4\)}
We close this section with two simple observations about the equality case in the range \(2<p<4\). The first one says that every largest equilateral set must be affinely independent. Thus, in this range, the extremal configuration is always an \(n\)-simplex in the affine sense.

\begin{prop}\label{prop:equality-affine-independent}
Let \(2<p<4\), and let
\(
A=\{\boldsymbol{x}_1,\dots,\boldsymbol{x}_m\}\subseteq \mathbb R^n
\)
be an equilateral set in \((\mathbb R^n,\|\cdot\|_p)\). If \(m=n+1\), then \(A\) is affinely independent. Equivalently, \(A\) is the vertex set of an \(n\)-simplex.
\end{prop}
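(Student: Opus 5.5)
We reuse the rank identity \eqref{eq:main-rank-identity-corrected} from the proof of Theorem~\ref{thm:KusnerConj}, now with \(k=0\) and \(2<p<4\). In this range \(d=1\), so the proof of Theorem~\ref{thm:cnd} takes no oscillatory terms and \(E_\theta\equiv0\). By Lemma~\ref{lem cnd}, the kernel \(K(x,y)=b_0(x-y)^2-|x-y|^p\) is in fact \emph{strictly} conditionally negative definite on the finite set \(B\) of coordinate values, with \(b_0=a_2\). The centered identity then reads
\[
2b_0\widetilde G_2=-C_mM_KC_m+C_m,
\]
where \(\widetilde G_2\) is the Gram matrix of the centered vectors \(\boldsymbol{x}_i-\overline{\boldsymbol{x}}\).

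By Lemma~\ref{lem:rank2} (with \(E=0\)), the right-hand side is positive definite on \(\boldsymbol{1}^\perp\), so it has rank exactly \(m-1=n\) on that subspace. In particular, for every nonzero \(\boldsymbol v\in\boldsymbol 1^\perp\) we have \(2b_0\,\boldsymbol v^{\mathsf T}\widetilde G_2\boldsymbol v\ge\|\boldsymbol v\|_2^2>0\). Since \(\widetilde G_2\) is positive semidefinite, this forces \(b_0>0\). It also forces \(\widetilde G_2\) to be positive definite on \(\boldsymbol 1^\perp\).

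Affine independence now follows. Suppose \(\sum_i\lambda_i\boldsymbol{x}_i=\boldsymbol 0\) with \(\sum_i\lambda_i=0\) and \(\boldsymbol\lambda\ne\boldsymbol 0\). Then \(\sum_i\lambda_i(\boldsymbol{x}_i-\overline{\boldsymbol x})=\boldsymbol 0\), so \(\boldsymbol\lambda^{\mathsf T}\widetilde G_2\boldsymbol\lambda=\bigl\|\sum_i\lambda_i(\boldsymbol x_i-\overline{\boldsymbol x})\bigr\|_2^2=0\). This contradicts the positivity just established. Hence the \(n+1\) points are affinely independent, which is the same as saying they form the vertex set of an \(n\)-simplex.

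Notice that the argument never uses \(m=n+1\) directly; it shows that the centered vectors span a space of dimension \(m-1\), for any \(m\). The hypothesis \(m=n+1\) is needed only for the conclusion ``vertex set of an \(n\)-simplex''. The one delicate step is checking that \(b_0\) is positive, so that the sign of \(b_0\) cannot reverse the inequality. This is handled by positivity of the right-hand side, exactly as above. Strictness of conditional negative definiteness is not actually needed, since the \(C_m\) term already supplies the strict positivity.
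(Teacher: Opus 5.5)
Your proposal is correct and is essentially the paper's proof. Both reduce to the centered identity $2b_0\widetilde G_2=NN^{\mathsf T}+C_m$ and conclude that $\widetilde G_2$ is nondegenerate on $\boldsymbol 1^{\perp}$. The paper phrases this as $\operatorname{rank}\widetilde G_2\ge m-1$ via Lemma~\ref{lem:rank2}, while you phrase it directly as positive definiteness on $\boldsymbol 1^{\perp}$. Your side remark is also right: the hypothesis $m=n+1$ enters the paper's argument only cosmetically, since $\operatorname{rank}\widetilde G_2\ge m-1$ already forces the affine span to have dimension $m-1$ for any $m\ge 2$.
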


\begin{proof}[Proof of Proposition~\ref{prop:equality-affine-independent}]
In the proof of Theorem~\ref{thm:KusnerConj}, when \(2<p<4\), we have \(k=0\). Thus there are no oscillatory correction terms and no error term. The centered identity reduces to
\[
2b_0\widetilde G_2=-C_mM_KC_m+C_m,
\]
where
\(
\widetilde G_2:=-\frac12 C_mD_2C_m
\)
is the centered Gram matrix of \(\boldsymbol{x}_1,\dots,\boldsymbol{x}_m\), and \(M_K\) is the matrix associated with the conditionally negative definite kernel \(K\).

By Lemma~\ref{lem:cnd posit}, the matrix \(-C_mM_KC_m\) is positive semidefinite. Hence we may write
\[
-C_mM_KC_m=NN^{\mathsf T}
\]
for some real matrix \(N\). Therefore
\[
2b_0\widetilde G_2=NN^{\mathsf T}+C_m.
\]
By Lemma~\ref{lem:rank2}, applied with \(E=0\), we get
\(
\operatorname{rank}(2b_0\widetilde G_2)\ge m-1.
\)
Since \(m=n+1\ge2\), this also implies \(b_0\ne0\), and hence
\[
\operatorname{rank}(\widetilde G_2)\ge m-1.
\]
On the other hand, \(\widetilde G_2\) is the Gram matrix of \(m\) centered vectors in \(\mathbb R^n\), so
\(
\operatorname{rank}(\widetilde G_2)\le n.
\)
Since \(m=n+1\), we have
\[
\operatorname{rank}(\widetilde G_2)=n=m-1.
\]
But the rank of the centered Gram matrix is exactly the dimension of the affine span of \(A\). Hence the affine span of \(A\) has dimension \(m-1\), which means that \(\boldsymbol{x}_1,\dots,\boldsymbol{x}_m\) are affinely independent.
\end{proof}

The preceding proposition should not be confused with Euclidean regularity. It only says that a largest equilateral set is an affine simplex. In general, this simplex need not be regular with respect to the Euclidean metric.
\begin{prop}\label{prop:nonregular-largest-general-n}
Let $2<p<4$ and $n\ge 2$.
Then there exists an equilateral set of size $n+1$ in $(\mathbb R^n,\|\cdot\|_p)$
which is not a regular simplex in the Euclidean sense.
\end{prop}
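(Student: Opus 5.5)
We need to construct an equilateral set of size $n+1$ in $\ell_p^n$, $2<p<4$, that is not a Euclidean-regular simplex.

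Natural construction: the standard basis vectors $\boldsymbol e_1,\dots,\boldsymbol e_n$ together with $\lambda\boldsymbol 1$, where $\lambda$ is chosen so that
\[
\|\lambda\boldsymbol 1-\boldsymbol e_i\|_p^p=(n-1)|\lambda|^p+|1-\lambda|^p=2 .
\]
This set is non-regular in the Euclidean sense whenever
\[
\|\lambda\boldsymbol 1-\boldsymbol e_i\|_2^2=(n-1)\lambda^2+(1-\lambda)^2\ne 2 .
\]

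\textbf{Step 1: existence of $\lambda$.} Set $f(\lambda)=(n-1)|\lambda|^p+|1-\lambda|^p$. Then $f(0)=1<2$ and $f(\lambda)\to\infty$ as $\lambda\to\infty$, so by the intermediate value theorem there is a root $\lambda_+>0$ of $f=2$. Similarly there is a root $\lambda_-<0$. The points $\boldsymbol e_i$ are pairwise at $\ell_p$-distance $2^{1/p}$, so with either root the set is equilateral.

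\textbf{Step 2: Euclidean regularity would force a special value of $\lambda$.} The pairwise Euclidean distances among the $\boldsymbol e_i$ are $\sqrt2$. Regularity therefore requires $g(\lambda):=(n-1)\lambda^2+(1-\lambda)^2=2$, that is,
\[
n\lambda^2-2\lambda-1=0,\qquad \lambda=\frac{1\pm\sqrt{n+1}}{n}.
\]
At such a $\lambda$ we would need both $f(\lambda)=2$ and $g(\lambda)=2$ simultaneously. For $n\ge2$ we have $|\lambda|<1$ at the positive root; check also the negative root and whether $|1-\lambda|$ exceeds $1$. The plan is to show $f\neq g$ at these two points using strict convexity/power-mean comparisons.

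The cleanest route is to compare $\sum_j|y_j|^p$ with $\sum_j y_j^2$ for $\boldsymbol y=\lambda\boldsymbol 1-\boldsymbol e_i$. If all coordinates satisfy $|y_j|\le1$, with at least one strictly between $0$ and $1$ in absolute value, then $|y_j|^p<y_j^2$ for that coordinate, so $f(\lambda)<g(\lambda)=2$. For the positive root $\lambda=(1+\sqrt{n+1})/n$ one has $0<\lambda<1$ when $n\ge3$, and then $|1-\lambda|<1$ as well, giving a contradiction. The remaining cases need separate handling:
\begin{itemize}
\item $n=2$, positive root: here $\lambda=(1+\sqrt3)/2>1$, but $|1-\lambda|<1$.
\item Negative root $\lambda=(1-\sqrt{n+1})/n\in(-1,0)$: here $1-\lambda>1$.
\end{itemize}
In these cases, instead of the coordinatewise comparison, evaluate $f$ directly at that $\lambda$ and show $f(\lambda)\ne2$.

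\textbf{Main obstacle and fallback.} This direct evaluation in the mixed cases is where I expect the difficulty. The way around it is to avoid those cases: choose the root $\lambda_+$ when $n\ge3$, and for $n=2$ pick whichever root makes the argument easy.

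A cleaner alternative avoiding case analysis: since $f=2$ has two distinct roots $\lambda_\pm$ of opposite signs, and at most one of them can coincide with the Euclidean-regular value of the same sign, it suffices to show that not both of $\lambda_+,\lambda_-$ give regular simplices. To do this, use that a Euclidean-regular $n$-simplex containing $\boldsymbol e_1,\dots,\boldsymbol e_n$ has its last vertex uniquely determined up to reflection through the hyperplane $\sum x_i=1$. Both reflected vertices are multiples of $\boldsymbol 1$, namely the two roots of $g=2$. So regularity for both $\lambda_\pm$ would force $\{\lambda_+,\lambda_-\}=\{(1\pm\sqrt{n+1})/n\}$, i.e.\ both roots of $g=2$ also satisfy $f=2$.

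To rule this out, check the positive root with the coordinatewise inequality $|y|^p<y^2$ for $0<|y|<1$ when $n\ge3$. For $n=2$, a direct numerical/monotonicity check in $p$ is needed: $h(p)=|\lambda|^p+|1-\lambda|^p$ is strictly convex in $p$ with $h(2)=2$, and $h(p)\ne2$ for $p\in(2,4)$ unless $h(4)\le2$ and so on. This last $n=2$ verification is the step I expect to require the most care.
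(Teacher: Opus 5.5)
Your argument is correct for $n\ge 4$ but fails at $n=3$ and is unfinished at $n=2$. For $n\ge4$, $\mu_+:=(1+\sqrt{n+1})/n$ and $1-\mu_+$ lie in $(0,1)$, so $f(\mu_+)<g(\mu_+)=2$. Hence $\lambda_+\ne\mu_+$, and $\lambda_+>0$ rules out the negative root of $g=2$.

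\textbf{The case $n=3$.} Your claim that $0<\mu_+<1$ for $n\ge3$ is false at $n=3$, since $\sqrt{n+1}<n-1$ requires $n>3$; there $\mu_+=1$ exactly. In fact $\{\boldsymbol e_1,\boldsymbol e_2,\boldsymbol e_3,(1,1,1)\}$ is $\ell_p$-equilateral for every $p$, because every difference has two coordinates equal to $\pm1$ and one equal to $0$. It is also a regular Euclidean tetrahedron. Since $f$ is convex with $f(0)=1<2$, its unique positive root is $\lambda_+=1$. So your prescription ``take $\lambda_+$ when $n\ge3$'' produces exactly a Euclidean-regular simplex when $n=3$. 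Your ``cleaner alternative'' breaks at the same point, because it checks the positive root of $g=2$ with the same coordinatewise inequality $|y|^p<y^2$, and that inequality is an equality for all coordinates of $(1,1,0)$.

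\textbf{The case $n=2$.} This is left unfinished: the remark about $h(4)\le 2$ ``and so on'' is not an argument.

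\textbf{Repair.} Both cases can be fixed within your framework by using the negative root together with convexity in $p$. Let $\mu_-:=(1-\sqrt{n+1})/n$ and put $h(p):=(n-1)|\mu_-|^p+|1-\mu_-|^p$. This is strictly convex in $p$ and satisfies $h(2)=g(\mu_-)=2$.
\begin{itemize}
\item For $n=2$: $h(0)=2=h(2)$, so strict convexity forces $h(p)>2$ for all $p>2$.
\item For $n=3$: $\mu_-=-1/3$, so $h(p)=2\cdot 3^{-p}+(4/3)^p$. Then $h'(2)=\tfrac29\ln(2^{16}/3^9)>0$, and strict convexity again gives $h(p)>2$ for $p>2$.
\end{itemize}
Thus $f(\mu_-)\ne2$, so $\lambda_-\ne\mu_-$. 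Also $\lambda_-<0<\mu_+$, hence $g(\lambda_-)\ne2$ and the set built from $\lambda_-$ is not Euclidean-regular.

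\textbf{Comparison with the paper.} The paper avoids this case analysis by building its set so that one pair differs in a single coordinate, while the other pairs differ in two coordinates of equal size. Their Euclidean lengths, $1$ versus $2^{1/2-1/p}$, then differ automatically once $n\ge3$, and only $n=2$ needs a one-line computation.
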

\begin{proof}[Proof of Proposition~\ref{prop:nonregular-largest-general-n}]
Let \( \boldsymbol{x}_0:=\frac12\boldsymbol{e}_1 \) and \( \boldsymbol{x}_1:=-\frac12\boldsymbol{e}_1. \) Next, let $\alpha>0$ be the unique real number satisfying \begin{equation}\label{eq:ConstructionP} \bigl(\alpha+2^{-1/p}\bigr)^p+(n-2)\alpha^p=1-2^{-p}. \end{equation} Such an $\alpha$ exists and is unique, because the function \[ h(t):=\bigl(t+2^{-1/p}\bigr)^p+(n-2)t^p \] is continuous and strictly increasing when \(t\ge 0\), with \(h(0)=2^{-1}<1-2^{-p}, \) since \(p>1\), and \(h(t)\to\infty\) as \(t\to\infty\). For each \(i=1,\ldots,n-1\), define \[ \boldsymbol{x}_{i+1} := \alpha\sum_{j=2}^{n}\boldsymbol{e}_j + 2^{-1/p}\boldsymbol{e}_{i+1}. \] We claim that \( A:=\{\boldsymbol{x}_0,\boldsymbol{x}_1,\dots,\boldsymbol{x}_n\} \) is equilateral in \((\mathbb R^n,\|\cdot\|_p)\) with common distance \(1\). First, \( \|\boldsymbol{x}_0-\boldsymbol{x}_1\|_p^p=1. \) Next, if \(i\neq j\) with \(1\le i,j\le n-1\), then \(\boldsymbol{x}_{i+1}-\boldsymbol{x}_{j+1}\) has exactly two nonzero coordinates, namely \(2^{-1/p}\) and \(-2^{-1/p}\). Hence \[ \|\boldsymbol{x}_{i+1}-\boldsymbol{x}_{j+1}\|_p^p = 2\cdot 2^{-1} = 1. \] Finally, for each \(i=1,\ldots,n-1\), \[ \boldsymbol{x}_0-\boldsymbol{x}_{i+1} = \frac12\boldsymbol{e}_1 - \alpha\sum_{j=2}^{n}\boldsymbol{e}_j - 2^{-1/p}\boldsymbol{e}_{i+1}, \] and hence, by~\eqref{eq:ConstructionP}, \[ \|\boldsymbol{x}_0-\boldsymbol{x}_{i+1}\|_p^p = 2^{-p} + \bigl(\alpha+2^{-1/p}\bigr)^p + (n-2)\alpha^p = 1. \] The same computation applies to \(\boldsymbol{x}_1\), and gives \( \|\boldsymbol{x}_1-\boldsymbol{x}_{i+1}\|_p^p=1. \) Therefore \(A\) is equilateral in \((\mathbb R^n,\|\cdot\|_p)\). It remains to show that \(A\) is not a regular simplex in the Euclidean sense. We have \( \|\boldsymbol{x}_0-\boldsymbol{x}_1\|_2^2=1. \)

If \(n\ge3\), then for distinct \(i,j\in\{1,\dots,n-1\}\), \[ \|\boldsymbol{x}_{i+1}-\boldsymbol{x}_{j+1}\|_2^2 = 2\cdot 2^{-2/p} = 2^{1-2/p} \ne 1. \] Thus \(A\) is not Euclidean regular when \(n\ge3\). It remains to consider the case \(n=2\). In this case \eqref{eq:ConstructionP} gives \( \alpha+2^{-1/p} = (1-2^{-p})^{1/p}. \) Therefore \[ \|\boldsymbol{x}_0-\boldsymbol{x}_2\|_2^2 = \frac14+\bigl(\alpha+2^{-1/p}\bigr)^2 = \frac14+(1-2^{-p})^{2/p}. \] Since \(p>2\), we have \(2/p<1\), and hence \( (1-2^{-p})^{2/p}>1-2^{-p}>\frac34. \) It follows that \[ \|\boldsymbol{x}_0-\boldsymbol{x}_2\|_2^2>1 = \|\boldsymbol{x}_0-\boldsymbol{x}_1\|_2^2. \] Thus \(A\) is not Euclidean regular also when \(n=2\). This completes the proof.
\end{proof}

\section{An almost linear bound for \(e(\ell_{p}^{n})\) when \(p\in(4k,4k+2)\)}
The proof of Theorem~\ref{thm:log-upper-4k-4k+2} requires a different argument from that used in the preceding section. 
We begin with a brief sketch, highlighting where the previous rank argument breaks down and explaining the mechanism that replaces it.

\subsection{Sketch of the Proof of Theorem~\ref{thm:log-upper-4k-4k+2}}
Let \(A\subseteq\mathbb R^n\) be an equilateral set with common \(\ell_p\)-distance
\(1\), where \(p\in(4k,4k+2)\).  After translating \(A\), we may assume that one
point of \(A\) is the origin.  This normalization places all coordinates in a bounded
interval and gives a uniform coordinatewise \(p\)-mass bound, which will later make
the approximation errors add up correctly over the \(n\) coordinates.

The finite-rank argument from the preceding section cannot be closed in the
same way in the range \(p\in(4k,4k+2)\). The obstruction is a parity-driven
sign reversal at the last level of the moment filtration. In the interior of
the range treated in the preceding section, namely \(p\in(4k+2,4k+4)\), one has
\(d=\lfloor p/2\rfloor=2k+1\), and Lemma~\ref{lem odd d} gives the favorable
positive definiteness of \(Q_{|t|^p}\) on \(V_d\). This sign was the key input
which allowed the signed kernel \(-|x-y|^p\) to be corrected, up to a controlled
error term, by a quadratic term and finitely many oscillatory kernels. After
double centering, the conditionally negative contribution lay on the same side
as the \(p\)-distance term and could be absorbed into the
positive-semidefinite side, leaving only a controlled number of unfavorable
finite-rank modes to be counted.

In the complementary range \(p\in(4k,4k+2)\), however, we have
\(d=\lfloor p/2\rfloor=2k\), and the corresponding quadratic form
\(Q_{|t|^p}\) is negative definite on \(V_d\) rather than positive definite.
Thus the sign needed for the preceding absorption mechanism is reversed. The
previous finite-rank correction scheme therefore no longer produces a
conditionally negative definite correction of the signed kernel \(-|x-y|^p\)
in a way compatible with the same rank count. This reversal suggests the
appropriate replacement: the kernel to be corrected is now \(|x-y|^p\) itself,
rather than \(-|x-y|^p\).

The replacement is a zero-anchored Hilbert-space representation. Write
\(p=4k+\beta\), with \(0<\beta<2\). Since the kernel
\( |u-v|^\beta
\)
is conditionally negative definite on the real line, it admits a Hilbert-space
realization by squared distances. We use this conditionally negative kernel
after taking \(2k\)-fold zero-anchored primitives. Integrating by parts then
reduces the exponent \(p\) to the conditionally negative exponent \(\beta\).
This gives a one-dimensional representation in which the main sign-definite
part of the \(p\)-distance quadratic form becomes a squared norm in a Hilbert
space, up to finite-dimensional boundary correction terms. The anchoring at the
origin is essential: it ensures that the boundary terms produced by the
integrations remain finite-dimensional.

Applying this representation coordinate by coordinate, and then performing the
centering step, gives the analogue of the matrix identity used in the preceding
section. The centered simplex matrix is balanced against two contributions: a
finite-dimensional correction term of rank at most \(4kn\), coming from the
boundary terms, and a Gram matrix coming from the Hilbert-space component.

The crucial difference is where this Hilbert-space term appears in the rank
identity. In the previous argument, the conditionally negative contribution was
on the same side as the \(p\)-distance term. Hence, after double centering, it
could be absorbed into the positive-semidefinite side, while the rank estimate
was applied to the opposite side. In the present range, the sign is reversed:
the Hilbert-space Gram term appears on the side to which the rank estimate must
be applied. Thus it is no longer harmless. Although its Gram matrix has finite
rank for the finite set under consideration, that rank may be as large as the
size of the set itself, so inserting it directly would destroy the rank count.

The rest of the proof is therefore an approximation argument. The resulting Hilbert map
has a scaling structure, which allows its image to be approximated by dyadic
shells. Using \(L\) shells gives an \(O_p(L)\)-dimensional approximation in one
coordinate, and hence an \(O_p(nL)\)-dimensional approximation after taking the
direct sum over all coordinates. A coordinatewise \(p\)-mass estimate controls
the accumulation of the approximation errors over the \(n\) coordinates.
Choosing \(L=O_{p}(\log{n})\), and then applying a suitable projection lower
bound to the resulting centered Gram identity, yields
\(
|A|\le C_p n\log(2n).
\)

\subsection{A zero-anchored primitive and an exact bridge}
Fix \(p\in(4k,4k+2)\) with \(p\ge1\), and write
\(
\beta:=p-4k\in(0,2).
\)
The zero anchored primitive below is needed only when \(k\ge1\). Thus, for the
rest of this lemma, assume \(k\ge1\) and set
\(
n_0:=2k-1.
\)
The case \(k=0\), namely \(1\le p<2\), will be handled separately in
Proposition~\ref{prop:zero-anchored-exact-bridge-log}.
The next function should be viewed as a compactly supported \(2k\)-fold primitive of the point mass \(\delta_x\), normalized so that all lower-order boundary terms are anchored at the origin. This anchoring is what later produces only finitely many correction terms. For a real number \(u\), write \( u_+:=\max\{u,0\}.\)
\begin{lemma}\label{lem:zero-anchored-primitive-log}
For each $x\in[-1,1]$, define
\[
\psi_x^{(0)}(t):=
\begin{cases}
\dfrac{(x-t)_+^{\,n_0}}{n_0!}
-
\displaystyle\sum_{r=0}^{n_0}
\frac{x^r}{r!}\,
\frac{(-t)_+^{\,n_0-r}}{(n_0-r)!},
& x\ge 0,\\[3ex]
\dfrac{(t-x)_+^{\,n_0}}{n_0!}
-
\displaystyle\sum_{r=0}^{n_0}
\frac{(-x)^r}{r!}\,
\frac{t_+^{\,n_0-r}}{(n_0-r)!},
& x\le 0.
\end{cases}
\]
Then the following hold.

\begin{enumerate}
\item[\textnormal{(i)}]
\(
\operatorname{supp}\psi_x^{(0)}
\subseteq [\min\{x,0\},\max\{x,0\}].
\)

\item[\textnormal{(ii)}]
In the sense of distributions,
\[
\bigl(\psi_x^{(0)}\bigr)^{(2k)}
=
\delta_x-\sum_{r=0}^{2k-1}\frac{(-1)^r x^r}{r!}\,\delta_0^{(r)}.
\]

\item[\textnormal{(iii)}]
For every $\alpha>-1$ and every integer $r\in\{0,1,\dots,2k-1\}$,
\[
\int_{\mathbb R}\psi_x^{(0)}(t)\,\operatorname{sgn}(t)^r|t|^\alpha\,dt
=
\frac{\Gamma(\alpha+1)}{\Gamma(\alpha+2k+1)}
\operatorname{sgn}(x)^r|x|^{\alpha+2k}.
\]
In particular,
\(
\int_{\mathbb R}\psi_x^{(0)}(t)\,dt=\frac{|x|^{2k}}{(2k)!},
\)
and
\[
\int_{\mathbb R}\psi_x^{(0)}(t)\,|t|^\beta\,dt
=
\frac{\Gamma(\beta+1)}{\Gamma(p-2k+1)}|x|^{p-2k}.
\]
\end{enumerate}
\end{lemma}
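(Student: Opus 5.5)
The plan is to first simplify $\psi_x^{(0)}$ on each region of the real line; all three items then reduce to Taylor's formula with integral remainder and a Beta integral. Throughout, $n_0=2k-1$ is odd, and $x\ge0$ and $x\le0$ are treated separately. The two cases are exchanged by the reflection $t\mapsto -t$, $x\mapsto -x$: comparing the two formulas, one checks directly that $\psi_x^{(0)}(t)=\psi_{-x}^{(0)}(-t)$. So it suffices to do $x\ge0$ in detail and transport the results by this symmetry, keeping track of the signs $\operatorname{sgn}(x)^r$.

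\emph{Support (i).} Take $x\ge0$ and consider three regions.
\begin{itemize}
\item For $t>x$, every term vanishes, since $(x-t)_+=0$ and $(-t)_+=0$.
\item For $t<0$, we have $(x-t)_+=x+(-t)$. The binomial theorem gives $\frac{(x-t)^{n_0}}{n_0!}=\sum_{r=0}^{n_0}\frac{x^r}{r!}\frac{(-t)^{n_0-r}}{(n_0-r)!}$, which is exactly the subtracted sum, so $\psi_x^{(0)}(t)=0$.
\item For $0\le t\le x$, the subtracted terms vanish (the term with $r=n_0$ matters only at the single point $t=0$, which is harmless in an integral), so $\psi_x^{(0)}(t)=(x-t)^{n_0}/n_0!$ there.
\end{itemize}
This proves (i). The reflection then gives, for $x\le0$, that $\psi_x^{(0)}(t)=(t-x)^{n_0}/n_0!$ on $[x,0]$ and $0$ elsewhere.

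\emph{Distributional identity (ii).} Rather than differentiating truncated powers and tracking signs term by term, I will test against $\varphi\in C_c^\infty(\mathbb R)$. We need
\[
\int\psi_x^{(0)}\varphi^{(2k)}\,dt=\varphi(x)-\sum_{r<2k}\frac{(-1)^rx^r}{r!}(-1)^r\varphi^{(r)}(0)=\varphi(x)-\sum_{r<2k}\frac{x^r}{r!}\varphi^{(r)}(0).
\]
For $x\ge0$ the left side is $\int_0^x\frac{(x-t)^{2k-1}}{(2k-1)!}\varphi^{(2k)}(t)\,dt$. This is precisely the integral form of the Taylor remainder of order $2k$ at $0$, so the identity holds.

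For $x<0$, the Taylor remainder is $\int_0^x\frac{(x-t)^{n_0}}{n_0!}\varphi^{(2k)}\,dt$. Since $n_0$ is odd, on $[x,0]$ we have $(x-t)^{n_0}=-(t-x)^{n_0}$, and reversing the limits turns the remainder into $\int_x^0\frac{(t-x)^{n_0}}{n_0!}\varphi^{(2k)}\,dt$. By the description of $\psi_x^{(0)}$ from part (i), this is $\int\psi_x^{(0)}\varphi^{(2k)}\,dt$. This sign check, which is where the oddness of $n_0$ enters, is the only delicate point, and I expect it to be the main obstacle.

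\emph{Moments (iii).} For $x\ge0$, $\operatorname{sgn}(t)^r=1$ on the support (a.e.). The substitution $t=xs$ gives
\[
\int_0^x\frac{(x-t)^{2k-1}}{(2k-1)!}t^\alpha\,dt=\frac{x^{\alpha+2k}}{(2k-1)!}B(\alpha+1,2k)=\frac{\Gamma(\alpha+1)}{\Gamma(\alpha+2k+1)}x^{\alpha+2k},
\]
which converges because $\alpha>-1$.

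For $x<0$, the support lies in $[x,0]$, where $\operatorname{sgn}(t)^r=(-1)^r=\operatorname{sgn}(x)^r$. The reflection $t\mapsto -t$ reduces the integral to the previous computation with $|x|$ in place of $x$, which yields the stated formula.

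The two special cases follow by specialization. Taking $\alpha=0$, $r=0$ gives $\Gamma(1)/\Gamma(2k+1)=1/(2k)!$. Taking $\alpha=\beta$, $r=0$ gives $\Gamma(\beta+1)/\Gamma(\beta+2k+1)\,|x|^{\beta+2k}$, and $\beta+2k=p-2k$ since $\beta=p-4k$.
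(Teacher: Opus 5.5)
Your proof is correct, and for parts (i) and (iii) it is essentially the paper's argument: binomial cancellation on the far side of $0$, then the substitution $t=xu$ and the Beta integral $B(\alpha+1,2k)$. The only cosmetic difference there is that you transport the case $x\le0$ via the symmetry $\psi_x^{(0)}(t)=\psi_{-x}^{(0)}(-t)$, while the paper repeats the computation.

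Part (ii) is where your route genuinely differs. The paper differentiates the defining formula term by term, using $\frac{d^{2k}}{dt^{2k}}\frac{(x-t)_+^{n_0}}{n_0!}=\delta_x$ and $\frac{d^{2k}}{dt^{2k}}\frac{(-t)_+^{n_0-r}}{(n_0-r)!}=(-1)^r\delta_0^{(r)}$. This traces each boundary distribution $\delta_0^{(r)}$ back to the particular truncated power that was subtracted, which makes the design of $\psi_x^{(0)}$ transparent. You instead use the simplified form from (i) and pair with a test function, recognizing $\psi_x^{(0)}$ as the Peano kernel of the order-$2k$ Taylor remainder at $0$. This avoids the iterated sign bookkeeping for derivatives of truncated powers. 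It also explains conceptually why the right-hand side is exactly the functional $\varphi\mapsto\varphi(x)-\sum_{r<2k}\frac{x^r}{r!}\varphi^{(r)}(0)$.

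A minor remark on parity: it is used in both of your cases, not only for $x<0$. For $x\ge0$ it enters silently in $\langle(\psi_x^{(0)})^{(2k)},\varphi\rangle=(-1)^{2k}\int\psi_x^{(0)}\varphi^{(2k)}\,dt$. Since $2k$ is even, this is automatic and your computation stands.
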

\begin{proof}[Proof of Lemma~\ref{lem:zero-anchored-primitive-log}]
    Suppose first that $x\ge 0$. For $t<0$, we have
\[
(x-t)^{n_0}
=
\sum_{r=0}^{n_0}\binom{n_0}{r}x^r(-t)^{n_0-r},
\]
hence the two terms in the definition of $\psi_x^{(0)}$ cancel identically on
$(-\infty,0)$. Also, for $t>x$, the first term vanishes and the second term is
already zero. Therefore
\(
\operatorname{supp}\psi_x^{(0)}\subseteq [0,x].
\)
Suppose now that $x\le 0$. For $t>0$, we have
\[
(t-x)^{n_0}
=
\sum_{r=0}^{n_0}\binom{n_0}{r}(-x)^r t^{n_0-r},
\]
so the two terms in the definition of $\psi_x^{(0)}$ cancel identically on
$(0,\infty)$. Also, for $t<x$, the first term vanishes and the second term is
already zero. Hence
\(
\operatorname{supp}\psi_x^{(0)}\subseteq [x,0].
\)
This proves \textnormal{(i)}.

We next prove \textnormal{(ii)}. If $x\ge 0$, then
\(
\frac{d^{2k}}{dt^{2k}}\frac{(x-t)_+^{n_0}}{n_0!}=\delta_x.
\)
Also for $0\le r\le n_0$, we have
\[
\frac{d^{2k}}{dt^{2k}}
\left(
\frac{(-t)_+^{n_0-r}}{(n_0-r)!}
\right)
=
(-1)^r\delta_0^{(r)}.
\]
Therefore
\[
\bigl(\psi_x^{(0)}\bigr)^{(2k)}
=
\delta_x-\sum_{r=0}^{2k-1}\frac{(-1)^r x^r}{r!}\delta_0^{(r)}.
\]
If $x\le 0$, then
\(
\frac{d^{2k}}{dt^{2k}}\frac{(t-x)_+^{n_0}}{n_0!}=\delta_x,
\)
and
\(
\frac{d^{2k}}{dt^{2k}}
\left(
\frac{t_+^{n_0-r}}{(n_0-r)!}
\right)
=
\delta_0^{(r)}.
\)
Since $(-x)^r=(-1)^r x^r$, we again obtain
\[
\bigl(\psi_x^{(0)}\bigr)^{(2k)}
=
\delta_x-\sum_{r=0}^{2k-1}\frac{(-1)^r x^r}{r!}\delta_0^{(r)}.
\]
This proves \textnormal{(ii)}.

Finally, we prove \textnormal{(iii)}. If $x=0$, then $\psi_0^{(0)}\equiv 0$, and the
claim is trivial. Assume first that $x>0$. By \textnormal{(i)}, $\psi_x^{(0)}$ is supported
on $[0,x]$, where $\operatorname{sgn}(t)^r=1$. Thus
\[
\int_{\mathbb R}\psi_x^{(0)}(t)\,\operatorname{sgn}(t)^r |t|^\alpha\,dt
=
\int_0^x \frac{(x-t)^{2k-1}}{(2k-1)!}\, t^\alpha\,dt.
\]
After the change of variables $t=xu$, this becomes
\[
\frac{x^{\alpha+2k}}{(2k-1)!}\int_0^1 (1-u)^{2k-1}u^\alpha\,du
=
\frac{x^{\alpha+2k}}{(2k-1)!}B(\alpha+1,2k)
=
\frac{\Gamma(\alpha+1)}{\Gamma(\alpha+2k+1)}x^{\alpha+2k}.
\]

If $x<0$, then $\psi_x^{(0)}$ is supported on $[x,0]$, where
$\operatorname{sgn}(t)^r=\operatorname{sgn}(x)^r$. Hence
\[
\int_{\mathbb R}\psi_x^{(0)}(t)\,\operatorname{sgn}(t)^r |t|^\alpha\,dt
=
\operatorname{sgn}(x)^r
\int_x^0 \frac{(t-x)^{2k-1}}{(2k-1)!}\, |t|^\alpha\,dt.
\]
Writing $x=-a$ with $a>0$, and changing variables $t=-au$, we obtain
\[
\operatorname{sgn}(x)^r
\frac{a^{\alpha+2k}}{(2k-1)!}
\int_0^1 (1-u)^{2k-1}u^\alpha\,du
=
\frac{\Gamma(\alpha+1)}{\Gamma(\alpha+2k+1)}
\operatorname{sgn}(x)^r |x|^{\alpha+2k}.
\]
This proves \textnormal{(iii)}.
\end{proof}

We now package the one-dimensional identity into a Hilbert space bridge. We use
Hilbert spaces only in a concrete linear algebraic way. Namely, whenever a
symmetric bilinear form is positive semidefinite, we may quotient out its null
space and complete the resulting inner product space, which produces a real
Hilbert space. In such a space, finite Gram matrices are positive semidefinite
and orthogonal projections onto finite dimensional subspaces are available. In
what follows, this construction is used to represent the conditionally negative
definite kernel \(|x-y|^\beta\), with \(0<\beta<2\), as a squared norm.

Although in this section we only need the case \(k\ge1\), we state the bridge
also for \(k=0\), where it reduces to the standard Hilbert space embedding of
conditionally negative definite kernels. In particular, it will be useful in the
proof of Lemma~\ref{lem:real-line-part-ple2}.

\begin{prop}\label{prop:zero-anchored-exact-bridge-log}
Let $p\in(4k,4k+2)$ for some integer $k\geq 0$, and set
	\(
	\beta:=p-4k\in(0,2).
	\)
	If $p\geq 1$,
then there exist:
\begin{itemize}
\item a real Hilbert space $\mathcal H_\beta$,
\item a continuous map $\Gamma_0:[-1,1]\to \mathcal H_\beta$,
\item a map $\boldsymbol w_0:[-1,1]\to \mathbb R^{4k}$,
\item symmetric positive semidefinite matrices
\(
M_{+,0},M_{-,0}\in\mathbb R^{4k\times 4k},
\)
\item linear operators $T_a:\mathcal H_\beta\to\mathcal H_\beta$ for $0<a\le 1$,
\end{itemize}
such that the following hold.

\medskip
\noindent
\textnormal{(i)}
For every finite set $X=\{x_1,\dots,x_m\}\subseteq [-1,1]$ and every
$\boldsymbol\lambda=(\lambda_1,\dots,\lambda_m)\in \boldsymbol{1}^{\perp}$,
\[
-\frac12\sum_{i,j=1}^m \lambda_i\lambda_j |x_i-x_j|^p
=
c_{p,k}\left\|\sum_{i=1}^m \lambda_i\Gamma_0(x_i)\right\|_{\mathcal H_\beta}^2 +
\left\|\sum_{i=1}^m \lambda_i M_{+,0}^{1/2}\boldsymbol w_0(x_i)\right\|_2^2
-
\left\|\sum_{i=1}^m \lambda_i M_{-,0}^{1/2}\boldsymbol w_0(x_i)\right\|_2^2,
\]
where
\(
c_{p,k}:=\frac{\Gamma(p+1)}{\Gamma(\beta+1)}.
\)

\medskip
\noindent
\textnormal{(ii)}
For all $0<a\le 1$ and all $x\in[-1,1]$,
\(
\Gamma_0(ax)=T_a\Gamma_0(x),
\)
and for \(\xi\in\mathcal H_\beta,\)
\[
\|T_a\xi\|_{\mathcal H_\beta}=a^{p/2}\|\xi\|_{\mathcal H_\beta}.
\]
\end{prop}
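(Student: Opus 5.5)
The plan is to build everything from a single integration-by-parts identity. It converts \(|x-y|^p\) into a pairing of the zero-anchored primitives \(\psi^{(0)}_x\) of Lemma~\ref{lem:zero-anchored-primitive-log} against the kernel \(|s-t|^\beta\), and it keeps track of the boundary terms at the origin.

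\textbf{Step 1: the bridge identity.} For \(s\neq t\) we have
\[
\partial_s^{2k}\partial_t^{2k}|s-t|^p=\frac{\Gamma(p+1)}{\Gamma(\beta+1)}|s-t|^\beta=c_{p,k}|s-t|^\beta .
\]
This holds in the sense of distributions because \(p>4k\), so \(|s-t|^p\) is \(C^{4k}\) and no singular part appears. Pair \(\psi^{(0)}_x\otimes\psi^{(0)}_y\) with both sides and move the \(4k\) derivatives onto the \(\psi\)'s. Lemma~\ref{lem:zero-anchored-primitive-log}(ii) then gives
\[
c_{p,k}\iint \psi^{(0)}_x(s)\psi^{(0)}_y(t)|s-t|^\beta\,ds\,dt
=\Bigl\langle \Bigl(\delta_x-\sum_{r<2k}\tfrac{(-x)^r}{r!}\delta_0^{(r)}\Bigr)\otimes\Bigl(\delta_y-\sum_{r'<2k}\tfrac{(-y)^{r'}}{r'!}\delta_0^{(r')}\Bigr),\,|s-t|^p\Bigr\rangle .
\]
The right-hand side splits into three kinds of terms.
\begin{itemize}
\item The main term is \(|x-y|^p\).
\item The mixed terms are \(\sum_r \tfrac{x^r}{r!}\,g_r(y)\) and their mirror images, where \(g_r(y)=\pm\partial_s^r|s-y|^p|_{s=0}\), a constant multiple of \(\operatorname{sgn}(y)^r|y|^{p-r}\).
\item The pure boundary terms involve \(\partial^{r+r'}|u|^p\) at \(u=0\) with \(r+r'\le 4k-2<p\). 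These all vanish.
\end{itemize}
Hence \(|x-y|^p\) equals \(c_{p,k}\) times the \(\beta\)-pairing, plus a sum of \(2k\) products of the form \(u_r(x)v_r(y)+v_r(x)u_r(y)\).

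\textbf{Step 2: the Hilbert space.} I would not use \(|s-t|^\beta\) directly, because \(\Psi=\sum_i\lambda_i\psi^{(0)}_{x_i}\) need not have total mass zero even when \(\boldsymbol\lambda\in\boldsymbol 1^\perp\). Instead, use the anchored kernel
\[
K_\beta(s,t)=|s|^\beta+|t|^\beta-|s-t|^\beta .
\]
Up to a constant this is the fractional Brownian motion covariance, so it is positive definite for \(0<\beta<2\).

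Let \(\mathcal H_\beta\) be the completion of compactly supported bounded functions under \(\langle f,g\rangle=\tfrac12\iint f(s)g(t)K_\beta(s,t)\,ds\,dt\), with its null space quotiented out. Set \(\Gamma_0(x):=\psi^{(0)}_x\). Then
\[
-\tfrac12\langle\Psi\otimes\Psi,|s-t|^\beta\rangle=\|\Psi\|^2_{\mathcal H_\beta}-\Bigl(\int\Psi\Bigr)\Bigl(\int\Psi|t|^\beta\Bigr).
\]
By Lemma~\ref{lem:zero-anchored-primitive-log}(iii), \(\int\psi_x=|x|^{2k}/(2k)!\) and \(\int\psi_x|t|^\beta\propto|x|^{p-2k}\). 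So this last term is one more product of functions of \(x\) alone.

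Now collect all functions of a single variable into \(\boldsymbol w_0(x)\in\mathbb R^{4k}\):
\[
\boldsymbol w_0(x)=\Bigl(\bigl(x^r/r!\bigr)_{r<2k},\ \bigl(g_r(x)\bigr)_{r<2k}\Bigr),
\]
with the anchoring products absorbed into the \(r=0\) slots, since \(g_0(x)\propto|x|^p\) up to the constants. The functions of \(y\) alone, such as \(|y|^p\), are killed because \(\sum_i\lambda_i=0\). The remaining cross terms form \(\boldsymbol w_0(x)^{\mathsf T}S\,\boldsymbol w_0(y)\) for a symmetric \(4k\times4k\) matrix \(S\), which has off-diagonal block form. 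Write \(S=M_{-,0}-M_{+,0}\) via its spectral decomposition, choosing the signs so that (i) comes out as stated. Multiplying through by \(-\tfrac12\) then gives (i), with the overall constant \(c_{p,k}\) as stated after rescaling the norm.

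\textbf{Step 3: scaling and continuity.} For \(k\ge1\), one checks directly from the definition that \(\psi^{(0)}_{ax}(t)=a^{2k-1}\psi^{(0)}_x(t/a)\). Define \(T_a\) on \(\mathcal H_\beta\) by \((T_af)(t)=a^{2k-1}f(t/a)\). Since \(K_\beta(as,at)=a^\beta K_\beta(s,t)\), the change of variables gives
\[
\|T_af\|^2=a^{2(2k-1)+2+\beta}\|f\|^2=a^{p}\|f\|^2 .
\]
For \(k=0\), take \(\Gamma_0(x)=K_\beta(x,\cdot)\) in the RKHS, with no \(\boldsymbol w_0\). In that case \(-\tfrac12\sum\lambda_i\lambda_j|x_i-x_j|^\beta=\tfrac12\sum\lambda_i\lambda_jK_\beta(x_i,x_j)\) directly, and \(T_a\) is the analogous dilation, scaling norms by \(a^{\beta/2}=a^{p/2}\). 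Continuity of \(\Gamma_0\) follows from continuity of \(x\mapsto\psi^{(0)}_x\) in \(L^1\) together with boundedness of \(K_\beta\) on \([-1,1]^2\).

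\textbf{Main obstacle.} I expect the bookkeeping in Step 1 to be the hardest part. One has to justify moving all \(4k\) derivatives across when \(|s-t|^p\) is only finitely smooth. One must also check that every \(\delta_0^{(r)}\otimes\delta_0^{(r')}\) term vanishes; this is exactly where \(r+r'\le4k-2<p\) is needed. Finally, the signs of the cross terms and of the anchoring correction have to be tracked so that they assemble into a single symmetric \(4k\times4k\) form, and this is what forces the dimension to be exactly \(4k\). I would first verify the identity for smooth test measures in place of \(\psi\), then pass to the limit.
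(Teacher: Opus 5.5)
Your overall route is the paper's, and most of the argument matches it:
\begin{itemize}
\item the reduction $\partial_s^{2k}\partial_t^{2k}|s-t|^p=c_{p,k}|s-t|^\beta$;
\item the vanishing of the $\delta_0^{(r)}\otimes\delta_0^{(r')}$ terms, since $r+r'\le 4k-2<p$;
\item your anchored form $\tfrac12\iint f(s)g(t)K_\beta(s,t)\,ds\,dt$.
\end{itemize}
That form is exactly the paper's $-\tfrac12\iint|u-v|^\beta\,d\mu\,d\nu$ evaluated on the mass-zero measures $f\,dt-(\int f)\,\delta_0$. So your $\Gamma_0(x)=[\psi^{(0)}_x]$ is the paper's $[\nu^{(0)}_x]$, and your dilation $T_a$ matches the paper's. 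Your $L^1$ continuity argument is fine, and a bit more direct than the paper's.

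\textbf{The error is in Step~2, in the choice of $\boldsymbol w_0$.} The anchoring correction you isolate is
\[
c_{p,k}\Bigl(\int\Psi\Bigr)\Bigl(\int\Psi\,|t|^\beta\Bigr)=\binom{p}{2k}\Bigl(\sum_i\lambda_i x_i^{2k}\Bigr)\Bigl(\sum_j\lambda_j|x_j|^{p-2k}\Bigr).
\]
This is a product of two \emph{different} linear functionals of $\boldsymbol\lambda$, and it cannot be ``absorbed into the $r=0$ slots'':
\begin{itemize}
\item The identity $x^{2k}|x|^{p-2k}=|x|^p$ holds only on the diagonal. The pair $(1,g_0)$ enters the bilinear form only through $\sum_i\lambda_i=0$, so those slots contribute nothing.
\item On $(0,1]$, the functions $x^{2k}$ and $x^{p-2k}$ are linearly independent of $1,x,\dots,x^{2k-1},x^{p},x^{p-1},\dots,x^{p-2k+1}$. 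This is because $p>4k$ makes all these exponents distinct.
\end{itemize}
Hence no symmetric $S$ on your coordinates reproduces this term modulo kernels of the form $f(x)+f(y)$, and (i) fails for your $\boldsymbol w_0$.

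The repair is to drop the $r=0$ pair, which is dead on $\boldsymbol 1^\perp$ anyway, and use the $r=2k$ pair instead:
\[
\boldsymbol w_0(x)=\bigl(x,\dots,x^{2k},\ \operatorname{sgn}(x)|x|^{p-1},\dots,\operatorname{sgn}(x)^{2k}|x|^{p-2k}\bigr).
\]
Here $x^r$ is paired with $\operatorname{sgn}(x)^r|x|^{p-r}$ for $1\le r\le 2k$, which is precisely the paper's $\boldsymbol w_0$ and its matrix $A_{p,k}$. So the dimension $4k$ comes from $r=1,\dots,2k$. The top level $r=2k$ is produced by the anchoring, not by the boundary sum in Step~1.
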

\begin{proof}[Proof of Proposition~\ref{prop:zero-anchored-exact-bridge-log}]
If \(k=0\), then \(1\le p<2\). Since \(|x-y|^p\) is conditionally negative definite on \(\R\), there exist a real Hilbert space \(\mathcal H_p\) and a map \(\Gamma_0:\R\to\mathcal H_p\) such that
\[
-\frac12\sum_{i,j}\lambda_i\lambda_j|x_i-x_j|^p
=
\left\|\sum_i\lambda_i\Gamma_0(x_i)\right\|^2.
\]
We take \(\boldsymbol w_0\) to be the map into \(\mathbb R^0\), and \(M_{+,0},M_{-,0}\) to be the corresponding \(0\times0\) matrices. The second statement follows from homogeneity.

   Assume now that \(k\ge1\). We use signed measures only as a compact notation for combining ordinary integrals and point masses: \(f(t)\,dt\) contributes \(\int \varphi(t)f(t)\,dt\), while \(\delta_y\) contributes \(\varphi(y)\). Let
\[
\mathcal M_0:=
\Bigl\{
\mu:\mu\ \text{is a compactly supported signed Borel measure on }\mathbb R,\
\mu(\mathbb R)=0
\Bigr\}.
\]
Define a symmetric bilinear form on $\mathcal M_0$ by
\[
\langle \mu,\nu\rangle_\beta
:=
-\frac12\iint_{\mathbb R^2}|u-v|^\beta\,d\mu(u)\,d\nu(v).
\]
Since $0<\beta<2$, the kernel $(u,v)\rightarrow |u-v|^\beta$ is conditionally negative
definite on $\mathbb R$, and therefore $\langle\cdot,\cdot\rangle_\beta$ is positive
semidefinite on $\mathcal M_0$. Quotienting by the null space and completing, we
obtain a real Hilbert space $\mathcal H_\beta$.

For $x\in[-1,1]$, let $\psi_x^{(0)}$ be as in
Lemma~\ref{lem:zero-anchored-primitive-log}, and define
\[
\nu_x^{(0)}:=\psi_x^{(0)}(t)\,dt-\frac{|x|^{2k}}{(2k)!}\delta_0.
\]
By Lemma~\ref{lem:zero-anchored-primitive-log}\textnormal{(iii)}, $\nu_x^{(0)}\in \mathcal M_0$.
Define
\(
\Gamma_0(x):=[\nu_x^{(0)}]\in \mathcal H_\beta.
\)
Now let
\(
X=\{x_1,\dots,x_m\}\subseteq [-1,1],
\)
\(
\boldsymbol\lambda=(\lambda_1,\dots,\lambda_m)\in \boldsymbol{1}^{\perp},
\)
and define
\[
Q_{p,X}(\boldsymbol\lambda):=
-\frac12\sum_{i,j=1}^m \lambda_i\lambda_j|x_i-x_j|^p.
\]
Set
\(
F_\lambda^{(0)}(t):=\sum_{i=1}^m \lambda_i\psi_{x_i}^{(0)}(t),\)
\(
\mu_\lambda:=\sum_{i=1}^m \lambda_i\delta_{x_i},
\)
\(
M_r(\boldsymbol\lambda):=\sum_{i=1}^m \lambda_i x_i^r
\)
for \(1\le r\le 2k\),
\[
S_r(\boldsymbol\lambda):=\sum_{i=1}^m \lambda_i\,\operatorname{sgn}(x_i)^r|x_i|^{p-r}
\]
for \(1\le r\le 2k.\)
By Lemma~\ref{lem:zero-anchored-primitive-log}\textnormal{(ii)},
\[
(F_\lambda^{(0)})^{(2k)}
=
\mu_\lambda-\sum_{r=0}^{2k-1}\frac{(-1)^r M_r(\boldsymbol\lambda)}{r!}\delta_0^{(r)}.
\]
Since $\boldsymbol\lambda\in\boldsymbol{1}^{\perp}$, we have $M_0(\boldsymbol\lambda)=0$, and therefore
\begin{equation}\label{eq:primitive-distribution-log}
(F_\lambda^{(0)})^{(2k)}
=
\mu_\lambda-\sum_{r=1}^{2k-1}\frac{(-1)^r M_r(\boldsymbol\lambda)}{r!}\delta_0^{(r)}.
\end{equation}
Define also
\[
\nu_\lambda^{(0)}:=F_\lambda^{(0)}(t)\,dt-\frac{M_{2k}(\boldsymbol\lambda)}{(2k)!}\delta_0.
\]
By Lemma~\ref{lem:zero-anchored-primitive-log}\textnormal{(iii)}, $\nu_\lambda^{(0)}\in\mathcal M_0$, and
\(
\sum_{i=1}^m \lambda_i\Gamma_0(x_i)=[\nu_\lambda^{(0)}].
\)
Let \(\|\cdot\|_{\mathcal H_\beta}\) denote the norm in the Hilbert
space \(\mathcal H_\beta\), induced by the inner product
\[
\langle \mu,\nu\rangle_\beta
:=
-\frac12\iint_{\mathbb R^2}|u-v|^\beta\,d\mu(u)\,d\nu(v).
\]
Since
\(
\sum_{i=1}^m\lambda_i\Gamma_0(x_i)=[\nu_\lambda^{(0)}]\in\mathcal H_\beta,
\)
we have, by the definition of this Hilbert-space norm,
\begin{equation}\label{eq:nu-energy-log}
\left\|\sum_{i=1}^m \lambda_i\Gamma_0(x_i)\right\|_{\mathcal H_\beta}^2
=
-\frac12\iint |u-v|^\beta\,d\nu_\lambda^{(0)}(u)\,d\nu_\lambda^{(0)}(v).
\end{equation}

We now compute $Q_{p,X}(\boldsymbol\lambda)$.

\paragraph{Bulk--bulk term.}
Set
\[
Q^{\mathrm{bulk}}
:=
-\frac12\left\langle
(F_\lambda^{(0)})^{(2k)}\otimes (F_\lambda^{(0)})^{(2k)},
\,|x-y|^p
\right\rangle,
\]
where the tensor product is understood in the usual distributional sense, more precisely,
for distributions \(S,T\) on \(\mathbb R\), the distribution \(S\otimes T\) acts on a
function \(\Phi(x,y)\) by first applying \(T\) in the \(y\)-variable and then \(S\)
in the \(x\)-variable.
Since
\[
\partial_x^{2k}\partial_y^{2k}\left(-\frac12|x-y|^p\right)
=
c_{p,k}\left(-\frac12|x-y|^\beta\right),
\]
integration by parts yields
\[
Q^{\mathrm{bulk}}
=
-\frac{c_{p,k}}2
\iint_{\mathbb R^2}
F_\lambda^{(0)}(u)F_\lambda^{(0)}(v)|u-v|^\beta\,du\,dv.
\]
On the other hand, Lemma~\ref{lem:zero-anchored-primitive-log}\textnormal{(iii)} gives
\[
\int_{\mathbb R}F_\lambda^{(0)}(t)\,dt
=
\frac{M_{2k}(\boldsymbol\lambda)}{(2k)!}\]
and
\[
\int_{\mathbb R}|t|^\beta F_\lambda^{(0)}(t)\,dt
=
\frac{\Gamma(\beta+1)}{\Gamma(p-2k+1)}\,S_{2k}(\boldsymbol\lambda).
\]
Expanding \eqref{eq:nu-energy-log}, and using
\(c_{p,k}:=\frac{\Gamma(p+1)}{\Gamma(\beta+1)},\) we obtain
\begin{equation}\label{eq:bulk-identity-log}
Q^{\mathrm{bulk}}
=
c_{p,k}
\left\|\sum_{i=1}^m \lambda_i \Gamma_0(x_i)\right\|_{\mathcal H_\beta}^2
-
\binom{p}{2k}M_{2k}(\boldsymbol\lambda)S_{2k}(\boldsymbol\lambda).
\end{equation}

\paragraph{Bulk--boundary term.}
For $1\le r\le 2k-1$, define
\(
h_r(x):=\operatorname{sgn}(x)^r |x|^{p-r}.
\) From now on, we use the notation \[ (p)_r:=p(p-1)\cdots(p-r+1) \] for the falling factorial.
Moreover, we use \(\langle \delta_0^{(r)},\cdot\rangle_y\) to denote that the distribution \(\delta_0^{(r)}\) acts on the \(y\)-variable, with \(x\) fixed. Therefore, for \(x\neq0\),
\[
\left\langle \delta_0^{(r)},\,|x-y|^p\right\rangle_y = (-1)^r \partial_y^r |x-y|^p\big|_{y=0}.
\]
Since 
\[
\partial_y^r |x-y|^p\big|_{y=0} = (-1)^r (p)_r \operatorname{sgn}(x)^r |x|^{p-r},
\]
we obtain \[ \left\langle \delta_0^{(r)},\,|x-y|^p\right\rangle_y = (p)_r h_r(x). \]
Using \eqref{eq:primitive-distribution-log}, the total bulk--boundary contribution is
\[
Q^{\mathrm{cross}}
=
-\sum_{r=1}^{2k-1}\frac{(-1)^r M_r(\boldsymbol\lambda)}{r!}\,
\left\langle (F_\lambda^{(0)})^{(2k)},\,(p)_r h_r\right\rangle.
\]
Here the factor $2$ coming from the two cross terms is cancelled by the
prefactor $-\frac12$ in $Q_{p,X}(\boldsymbol\lambda)$.

Notice that $p-r>2k$, so $h_r\in C^{2k}(\mathbb R)$, and
\[
h_r^{(2k)}(x)
=
(p-r)_{2k}\,\operatorname{sgn}(x)^r |x|^{p-r-2k}.
\]
Hence, by integration by parts and
Lemma~\ref{lem:zero-anchored-primitive-log}\textnormal{(iii)} with
$\alpha=p-r-2k>-1$,
\[
\left\langle (F_\lambda^{(0)})^{(2k)},\,h_r\right\rangle
=
S_r(\boldsymbol\lambda).
\]
Since $\frac{(p)_r}{r!}=\binom{p}{r}$, summing over $r=1,\dots,2k-1$ gives
\begin{equation}\label{eq:cross-identity-log}
Q^{\mathrm{cross}}
=
-\sum_{r=1}^{2k-1}(-1)^r\binom{p}{r}M_r(\boldsymbol\lambda)S_r(\boldsymbol\lambda).
\end{equation}

\paragraph{Boundary--boundary term.}
Every boundary--boundary term is of the form
\[
\left\langle \delta_0^{(r)}\otimes \delta_0^{(s)},\,|x-y|^p\right\rangle
=
(-1)^{r+s}\partial_x^r\partial_y^s |x-y|^p\big|_{(0,0)}.
\]
Since $r+s$ is an integer and
\(
r+s\le 4k-2<p,
\)
the derivative of order $r+s$ of $|t|^p$ exists at $0$ and equals $0$. Therefore all
boundary--boundary terms vanish.

Combining \eqref{eq:bulk-identity-log} and \eqref{eq:cross-identity-log}, we conclude that
\begin{equation}\label{eq:exact-bridge-finite-part-log}
Q_{p,X}(\boldsymbol\lambda)
=
c_{p,k}
\left\|\sum_{i=1}^m \lambda_i \Gamma_0(x_i)\right\|_{\mathcal H_\beta}^2
-
\sum_{r=1}^{2k}(-1)^r\binom{p}{r}M_r(\boldsymbol\lambda)S_r(\boldsymbol\lambda).
\end{equation}

We now encode the finite-dimensional correction. Define
\(
u_r(x):=x^r
\)
for \(1\le r\le 2k\),
\(
v_r(x):=\operatorname{sgn}(x)^r |x|^{p-r}\)
for \(1\le r\le 2k.\)
Set
\[
\boldsymbol{w}_0(x):=
\bigl(
u_1(x),\dots,u_{2k}(x),v_1(x),\dots,v_{2k}(x)
\bigr)\in\mathbb R^{4k}.
\]
Define a symmetric matrix $A_{p,k}\in\mathbb R^{4k\times 4k}$ by
\[
A_{p,k}(r,2k+r)=A_{p,k}(2k+r,r)
=
-\frac12(-1)^r\binom{p}{r}
\]
for \(1\le r\le 2k\),
and all other entries equal to $0$. Notice that if
\(
\mathbf m_{w_0}(\boldsymbol\lambda):=\sum_{i=1}^m \lambda_i \boldsymbol w_0(x_i),
\)
then by construction
\[
\mathbf m_{w_0}(\boldsymbol\lambda)^\top A_{p,k}\,\mathbf m_{w_0}(\boldsymbol\lambda)
=
-
\sum_{r=1}^{2k}(-1)^r\binom{p}{r}M_r(\boldsymbol\lambda)S_r(\boldsymbol\lambda).
\]
Thus \eqref{eq:exact-bridge-finite-part-log} becomes
\[
Q_{p,X}(\boldsymbol\lambda)
=
c_{p,k}
\left\|\sum_{i=1}^m \lambda_i \Gamma_0(x_i)\right\|_{\mathcal H_\beta}^2
+
\mathbf m_{w_0}(\boldsymbol\lambda)^\top A_{p,k}\,\mathbf m_{w_0}(\boldsymbol\lambda).
\]
Take a spectral decomposition
\(
A_{p,k}=M_{+,0}-M_{-,0}
\)
with \(
M_{+,0},M_{-,0}\succeq 0.
\)
Then
\[
Q_{p,X}(\boldsymbol\lambda)
=
c_{p,k}
\left\|\sum_{i=1}^m \lambda_i \Gamma_0(x_i)\right\|_{\mathcal H_\beta}^2 +
\left\|\sum_{i=1}^m \lambda_i M_{+,0}^{1/2}\boldsymbol w_0(x_i)\right\|_2^2
-
\left\|\sum_{i=1}^m \lambda_i M_{-,0}^{1/2}\boldsymbol w_0(x_i)\right\|_2^2.
\]
This proves \textnormal{(i)}.

We now prove \textnormal{(ii)}.  We first define the scaling operation at the
level of measures.  For \(0<a\le1\) and \(\mu\in\mathcal M_0\), let
\(\widetilde T_a\mu\) be the signed measure determined by
\[
\int_{\mathbb R}\varphi(t)\,d(\widetilde T_a\mu)(t)
=
a^{2k}\int_{\mathbb R}\varphi(au)\,d\mu(u)
\]
for every test function \(\varphi\).  In words, we scale the underlying variable by
\(u\mapsto au\), and multiply the mass by \(a^{2k}\).  This operation preserves
compact support and total mass zero, so \(\widetilde T_a\mu\in\mathcal M_0\).

For \(\mu,\nu\in\mathcal M_0\), the definition gives
\[
\begin{aligned}
\langle \widetilde T_a\mu,\widetilde T_a\nu\rangle_\beta
&=
-\frac12\iint_{\mathbb R^2}|u-v|^\beta\,
d(\widetilde T_a\mu)(u)\,d(\widetilde T_a\nu)(v)\\
&=
a^{4k}\left(
-\frac12\iint_{\mathbb R^2}|au-av|^\beta\,d\mu(u)\,d\nu(v)
\right)\\
&=
a^{4k+\beta}\langle\mu,\nu\rangle_\beta
=
a^p\langle\mu,\nu\rangle_\beta .
\end{aligned}
\]
In particular, if \(\mu\) belongs to the null space of the semidefinite form
\(\langle\cdot,\cdot\rangle_\beta\), then so does \(\widetilde T_a\mu\).  Hence
\(\widetilde T_a\) descends to the quotient space.  On the quotient, and then on
its Hilbert-space completion, we denote the induced operator by \(T_a\).  The
identity above gives
\[
\|T_a\xi\|_{\mathcal H_\beta}=a^{p/2}\|\xi\|_{\mathcal H_\beta}
\]
for every \(\xi\in\mathcal H_\beta\).

It remains to verify that this operator has the required action on the vectors
\(\Gamma_0(x)\).  By direct inspection of the definition of
\(\psi_x^{(0)}\), for every \(x\in[-1,1]\) and \(0<a\le1\),
\[
\psi_{ax}^{(0)}(t)=a^{2k-1}\psi_x^{(0)}(t/a).
\]
Equivalently, for every test function \(\varphi\),
\[
\int_{\mathbb R}\varphi(t)\psi_{ax}^{(0)}(t)\,dt
=
a^{2k}\int_{\mathbb R}\varphi(au)\psi_x^{(0)}(u)\,du.
\]
Moreover,
\(
\frac{|ax|^{2k}}{(2k)!}\delta_0
\)
is obtained from
\(
\frac{|x|^{2k}}{(2k)!}\delta_0
\)
by the same scaling rule, because the mass is multiplied by \(a^{2k}\) and the
point \(0\) remains fixed.  Therefore
\(
\nu_{ax}^{(0)}=\widetilde T_a\nu_x^{(0)}.
\)
Passing to equivalence classes in \(\mathcal H_\beta\), we obtain
\(
\Gamma_0(ax)=T_a\Gamma_0(x).
\)

Finally, we prove that \(\Gamma_0\) is continuous.  Applying
Proposition~\ref{prop:zero-anchored-exact-bridge-log}\textnormal{(i)} to the
two-point set \(\{x,y\}\) with \(\boldsymbol\lambda=(1,-1)\), we obtain
\[
|x-y|^p
=
c_{p,k}\|\Gamma_0(x)-\Gamma_0(y)\|_{\mathcal H_\beta}^2
+
\|M_{+,0}^{1/2}(\boldsymbol w_0(x)-\boldsymbol w_0(y))\|_2^2
-
\|M_{-,0}^{1/2}(\boldsymbol w_0(x)-\boldsymbol w_0(y))\|_2^2.
\]
Thus
\[
c_{p,k}\|\Gamma_0(x)-\Gamma_0(y)\|_{\mathcal H_\beta}^2
\le
|x-y|^p
+
\left|
\|M_{+,0}^{1/2}(\boldsymbol w_0(x)-\boldsymbol w_0(y))\|_2^2
-
\|M_{-,0}^{1/2}(\boldsymbol w_0(x)-\boldsymbol w_0(y))\|_2^2
\right|.
\]
Since \(\boldsymbol w_0\) is continuous on \([-1,1]\), the finite-dimensional correction tends
to \(0\) as \(y\to x\).  Hence
\[
\|\Gamma_0(x)-\Gamma_0(y)\|_{\mathcal H_\beta}\to0
\]
as \(y\to x\).  This completes the proof.

\end{proof}
\subsection{Projection and dyadic shell approximation}

We next record a projection lower bound.

\begin{prop}\label{prop:projection-logn}
Let $B=\{x_1,\dots,x_m\}$ be a finite set, let $H$ be a real Hilbert space, and let
\(
\phi:B\to H
\)
be a map. Define
\(
R
:=
\bigl(\langle \phi(x_i),\phi(x_j)\rangle_H\bigr)_{i,j=1}^m,\)
and
\(
C_m:=I_m-\frac{1}{m}J_{m}.
\)
Assume that $a>0$ and that there exists an $m\times m$ matrix $X$ with rank $r$
such that
\(
\frac{1}{a}\,C_m+X=R.
\)
Then for every orthogonal projection $P:H\to S$ onto an $s$-dimensional subspace
$S\subseteq H$, one has
\begin{equation}\label{eq:projection-lower-log}
\max_{x\in B} \|\phi(x)-P\phi(x)\|_H^2
\ge
\frac{m-r-1-s}{am}.
\end{equation}
\end{prop}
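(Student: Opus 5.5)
Write $\phi_i:=\phi(x_i)$ and split each vector as $\phi_i=P\phi_i+e_i$, where $e_i:=(I-P)\phi_i$ is the residual. The Gram matrix then decomposes as
\[
R=R_P+R_E,\qquad R_P:=\bigl(\langle P\phi_i,P\phi_j\rangle_H\bigr)_{i,j},\quad R_E:=\bigl(\langle e_i,e_j\rangle_H\bigr)_{i,j},
\]
because $P$ is an orthogonal projection and the cross terms $\langle P\phi_i,e_j\rangle$ vanish. Here $R_P$ is the Gram matrix of $m$ vectors in the $s$-dimensional space $S$, so $\operatorname{rank}(R_P)\le s$. Also $R_E\succeq0$, and its trace is
\[
\operatorname{tr}R_E=\sum_i\|e_i\|_H^2\le m\,\eta,\qquad \eta:=\max_{x\in B}\|\phi(x)-P\phi(x)\|_H^2 .
\]
Consequently $\lambda_{\max}(R_E)\le\operatorname{tr}R_E\le m\eta$. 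Substituting into the hypothesis $\frac1a C_m+X=R$ gives
\[
\frac1a C_m = R_P - X + R_E .
\]

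The plan is a rank comparison on a large subspace. Set
\[
U:=\boldsymbol 1^\perp\cap\ker(R_P)\cap\ker(X^{\mathsf T}).
\]
For $v\in\boldsymbol 1^\perp$ we have $C_mv=v$. Since $\operatorname{rank}X=r$, the kernel of $X^{\mathsf T}$ has codimension $r$, so by counting codimensions
\[
\dim U\ge (m-1)-s-r .
\]
If $m-r-1-s\le0$ the claimed inequality is trivial, so assume $\dim U\ge1$. Take a unit vector $v\in U$ and test the identity against it:
\[
\frac1a=\frac1a v^{\mathsf T}C_mv = v^{\mathsf T}R_Pv - v^{\mathsf T}Xv + v^{\mathsf T}R_Ev .
\]
Since $R_Pv=0$ and $v^{\mathsf T}X=(X^{\mathsf T}v)^{\mathsf T}=0$, the first two terms vanish. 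Hence $v^{\mathsf T}R_Ev=1/a$ for every unit $v\in U$.

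So $R_E$ restricted to $U$ equals $\frac1a$ times the identity, in the sense that its compression $Q_UR_EQ_U$ has all eigenvalues on $U$ equal to $1/a$. Taking the trace of this compression and using $R_E\succeq0$,
\[
\frac{\dim U}{a}=\operatorname{tr}(Q_UR_EQ_U)\le\operatorname{tr}R_E\le m\eta ,
\]
where $Q_U$ is the orthogonal projection onto $U$. Combining this with $\dim U\ge m-r-1-s$ gives
\[
\eta\ge\frac{m-r-1-s}{am},
\]
which is the claimed bound.

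The trace bound is what produces the factor $m$ in the denominator, which matches the statement, so I expect no obstacle there. The only delicate point is the interaction with $X$: it is not symmetric and its rank is counted once. The bilinear-form trick handles this by annihilating $X$ on one side only, since $v^{\mathsf T}Xv=0$ already follows from $X^{\mathsf T}v=0$.
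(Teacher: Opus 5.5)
Your proof is correct and is essentially the paper's argument: both split $R=R_P+R_E$ with $\operatorname{rank}R_P\le s$, find a subspace of $\boldsymbol{1}^{\perp}$ of dimension at least $m-1-s-r$ on which $R_E$ acts as $\frac1a$, and compare with $\operatorname{tr}R_E\le m\eta$. The only differences are cosmetic: the paper takes $\ker(R_P-X)\cap\boldsymbol{1}^{\perp}$ and reads off eigenvectors, while you intersect the kernels separately and use the compressed trace; also note that $X=R-\frac1a C_m$ is automatically symmetric, so your one-sided annihilation via $\ker(X^{\mathsf T})$ is harmless but not needed.
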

\begin{proof}[Proof of Proposition~\ref{prop:projection-logn}]
    Let
\(
Q:=I_H-P,
\)
so that $Q$ is the orthogonal projection onto $S^\perp$. For each $i\in[m]$, write
\(
\phi_i:=\phi(x_i).
\)
Define
\(
R_P
:=
\bigl(\langle P\phi_i,P\phi_j\rangle_H\bigr)_{i,j=1}^m,
\) and
\(
R_Q
:=
\bigl(\langle Q\phi_i,Q\phi_j\rangle_H\bigr)_{i,j=1}^m.
\)
Since $P$ and $Q$ are complementary orthogonal projections, we have
\(
R=R_P+R_Q,\)
therefore,
\(
R_Q=\frac1a\,C_m+X-R_P.
\)
Next, we set
\(
M:=R_P-X.
\)
Then
\[
\operatorname{rank}(M)
\le
\operatorname{rank}(R_P)+\operatorname{rank}(X)
\le s+r.
\]
Consider the subspace
\[
U:=\ker(M)\cap \boldsymbol{1}^\perp.
\]
Since $\dim(\boldsymbol{1}^\perp)=m-1$ and
\(
\dim\ker(M)=m-\operatorname{rank}(M)\ge m-(s+r),
\)
we obtain
\[
\dim U\ge m-1-(s+r).
\]
If $\boldsymbol u\in U$, then $M\boldsymbol u=0$ and $\boldsymbol u\in\boldsymbol{1}^\perp$, so $C_m \boldsymbol u=\boldsymbol u$. Therefore
\[
R_Q\boldsymbol u=\left(\frac1a\,C_m-M\right)\boldsymbol u=\frac1a\,\boldsymbol u.
\]
Thus every vector in $U$ is an eigenvector of $R_Q$ with eigenvalue $\frac{1}{a}$.
Since $R_Q$ is a Gram matrix, it is positive semidefinite. Hence
\[
\operatorname{Tr}(R_Q)\ge \frac{\dim U}{a}\ge \frac{m-1-(s+r)}{a}.
\]
On the other hand,
\(
\operatorname{Tr}(R_Q)
=
\sum_{i=1}^m \|Q\phi_i\|_H^2.
\)
Therefore
\[
\max_{1\le i\le m}\|Q\phi_i\|_H^2
\ge
\frac1m\sum_{i=1}^m \|Q\phi_i\|_H^2
=
\frac{\operatorname{tr}(R_Q)}{m}
\ge
\frac{m-r-1-s}{am}.
\]
Since $Q=I_H-P$, this is exactly \eqref{eq:projection-lower-log}.
\end{proof}

The next lemma approximates $\Gamma_0$ on dyadic shells.

\begin{lemma}\label{lem:dyadic-shell-approx-log}
Fix $\delta>0$. Then for every integer $L\ge 1$, there exists a subspace
\(
\mathcal V_L\subseteq \mathcal H_\beta
\)
such that
\(
\dim(\mathcal V_L)\le C_{p,k,\delta}\,L
\)
and for all \(x\in [-1,1]\),
\begin{equation}\label{eq:dyadic-shell-bound-log}
\operatorname{dist}\bigl(\Gamma_0(x),\mathcal V_L\bigr)^2
\le
2^p\delta^2 |x|^p + C_{p,k}\,2^{-Lp},
\end{equation}
where
\(
\operatorname{dist}(\xi,V):=\inf_{z\in V}\|\xi-z\|_{\mathcal H_\beta}.
\)
\end{lemma}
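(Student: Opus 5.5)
Throughout, \(\operatorname{dist}\) and norms are taken in \(\mathcal H_\beta\). The plan is to use the scaling structure of Proposition~\ref{prop:zero-anchored-exact-bridge-log}(ii) together with compactness of the base shell, which will cost only finitely many dimensions per dyadic scale.

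\textbf{Step 1 (base shell).} Let
\[
S:=\{x:\tfrac12\le |x|\le 1\}.
\]
Since \(\Gamma_0\) is continuous on \([-1,1]\), the image \(\Gamma_0(S)\) is compact in \(\mathcal H_\beta\). Hence there is a finite \(\delta\)-net \(\Gamma_0(y_1),\dots,\Gamma_0(y_N)\) with \(y_j\in S\) and \(N=N(p,k,\delta)\). Put \(W:=\operatorname{span}\{\Gamma_0(y_j)\}\). Then \(\dim W\le N\), and
\[
\operatorname{dist}(\Gamma_0(x),W)\le\delta \quad\text{for all } x\in S.
\]

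\textbf{Step 2 (transport to the other shells).} For \(\ell=0,1,\dots,L-1\), set \(a_\ell=2^{-\ell}\) and \(W_\ell:=T_{a_\ell}W\). Each \(W_\ell\) has dimension at most \(N\). Define
\[
\mathcal V_L:=W_0+W_1+\cdots+W_{L-1},
\]
so that \(\dim\mathcal V_L\le NL=C_{p,k,\delta}L\).

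Now take \(x\) with \(2^{-\ell-1}<|x|\le 2^{-\ell}\) and \(\ell\le L-1\). Then \(x=a_\ell y\) with \(y\in S\), and Proposition~\ref{prop:zero-anchored-exact-bridge-log}(ii) gives \(\Gamma_0(x)=T_{a_\ell}\Gamma_0(y)\). Choose \(z\in W\) with \(\|\Gamma_0(y)-z\|\le\delta\). Since \(T_{a_\ell}\) is linear and scales norms by \(a_\ell^{p/2}\),
\[
\operatorname{dist}(\Gamma_0(x),\mathcal V_L)\le\|T_{a_\ell}(\Gamma_0(y)-z)\|\le a_\ell^{p/2}\delta.
\]
Because \(a_\ell<2|x|\), we get \(a_\ell^{p}<2^p|x|^p\). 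This yields the first term \(2^p\delta^2|x|^p\) in \eqref{eq:dyadic-shell-bound-log}.

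\textbf{Step 3 (the tail \(|x|\le 2^{-L}\)).} We approximate by \(0\in\mathcal V_L\), so it suffices to bound \(\|\Gamma_0(x)\|^2\). For \(x=0\), we have \(\psi^{(0)}_0\equiv0\), hence \(\Gamma_0(0)=0\). For \(x\neq0\), write \(x=|x|\cdot\operatorname{sgn}(x)\) with scaling factor \(a=|x|\). Proposition~\ref{prop:zero-anchored-exact-bridge-log}(ii) then gives
\[
\|\Gamma_0(x)\|^2=|x|^p\,\|\Gamma_0(\pm1)\|^2\le C_{p,k}2^{-Lp},
\]
where \(C_{p,k}:=\max\{\|\Gamma_0(1)\|^2,\|\Gamma_0(-1)\|^2\}\).

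Combining Steps 2 and 3, in every case \(\operatorname{dist}(\Gamma_0(x),\mathcal V_L)^2\) is bounded by one of the two terms on the right of \eqref{eq:dyadic-shell-bound-log}, hence by their sum.

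The only substantive points are the compactness in Step 1, which rests on the continuity of \(\Gamma_0\) proved in Proposition~\ref{prop:zero-anchored-exact-bridge-log}, and the observation that \(T_a\) maps a finite-dimensional subspace onto one of no larger dimension. I do not expect a serious obstacle. The one thing to check carefully is that the scaling is applied with \(a\le1\), as Proposition~\ref{prop:zero-anchored-exact-bridge-log}(ii) requires; this holds in both Steps 2 and 3.
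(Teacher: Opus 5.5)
Your proposal is correct and follows essentially the same route as the paper: compactness and continuity of $\Gamma_0$ give a finite-dimensional $\delta$-approximating span on the base shell $\{1/2\le|x|\le1\}$. This span is transported to the dyadic shells by the operators $T_{2^{-\ell}}$, and the tail $|x|\le 2^{-L}$ is handled by approximating with $0$ and using $\|\Gamma_0(x)\|^2=|x|^p\|\Gamma_0(\pm1)\|^2$. The only cosmetic difference is that you take a single net for the union of the two half-shells, whereas the paper treats $[1/2,1]$ and $[-1,-1/2]$ separately.
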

\begin{proof}[Proof of Lemma~\ref{lem:dyadic-shell-approx-log}]
    Consider the compact intervals
\(
I_+:=[1/2,1]\) and
\(
I_-:=[-1,-1/2].
\)
Since $\Gamma_0$ is continuous, the sets $\Gamma_0(I_+)$ and $\Gamma_0(I_-)$ are
compact in $\mathcal H_\beta$. Hence there exist finite sets
\(
U_+=\{u_1^+,\dots,u_{N_+}^+\}\subseteq I_+\)
and
\(
U_-=\{u_1^-,\dots,u_{N_-}^-\}\subseteq I_-
\)
such that
\(
\operatorname{dist}\bigl(\Gamma_0(x),V_+\bigr)\le \delta\) for all \(x\in I_+\), and
\(
\operatorname{dist}\bigl(\Gamma_0(x),V_-\bigr)\le \delta\) for all \(x\in I_{-},\)
where
\(
V_+:=\operatorname{span}\{\Gamma_0(u_1^+),\dots,\Gamma_0(u_{N_+}^+)\}
\)
and
\(
V_-:=\operatorname{span}\{\Gamma_0(u_1^-),\dots,\Gamma_0(u_{N_-}^-)\}.
\)

For each $j=0,1,\dots,L-1$, define
\(
V_{j,+}:=T_{2^{-j}}V_+\)
and
\(
V_{j,-}:=T_{2^{-j}}V_-.
\)
Now set
\[
\mathcal V_L:=\sum_{j=0}^{L-1}(V_{j,+}+V_{j,-}).
\]
Then
\(
\dim(\mathcal V_L)\le (N_++N_-)L=:C_{p,k,\delta}L
\) for some \(C_{p,k,\delta}>0.\) Let $x\in[-1,1]$. If $x\in(2^{-j-1},2^{-j}]$ for some $0\le j\le L-1$, write
\(
x=2^{-j}u,\) \(u\in[1/2,1].
\)
Choose $v\in V_+$ with
\(
\|\Gamma_0(u)-v\|_{\mathcal H_\beta}\le \delta.
\)
Then $T_{2^{-j}}v\in V_{j,+}\subseteq \mathcal V_L$, and by the Proposition~\ref{prop:zero-anchored-exact-bridge-log}~(ii),
\[
\operatorname{dist}\bigl(\Gamma_0(x),\mathcal V_L\bigr)^2
\le
\|T_{2^{-j}}(\Gamma_0(u)-v)\|_{\mathcal H_\beta}^2
=
2^{-jp}\|\Gamma_0(u)-v\|_{\mathcal H_\beta}^2
\le
2^{-jp}\delta^2.
\]
Since $x\in(2^{-j-1},2^{-j}]$, we have $2^{-jp}\le 2^p |x|^p$, and therefore
\[
\operatorname{dist}\bigl(\Gamma_0(x),\mathcal V_L\bigr)^2
\le
2^p\delta^2 |x|^p.
\]

If $x\in[-2^{-j},-2^{-j-1})$ for some $0\le j\le L-1$, write
\(
x=2^{-j}u,\) \( u\in[-1,-1/2].
\)
Choose $v\in V_-$ with
\(
\|\Gamma_0(u)-v\|_{\mathcal H_\beta}\le \delta.
\)
Then $T_{2^{-j}}v\in V_{j,-}\subseteq \mathcal V_L$, and similarly
\[
\operatorname{dist}\bigl(\Gamma_0(x),\mathcal V_L\bigr)^2
\le
2^{-jp}\delta^2
\le
2^p\delta^2|x|^p.
\]
Finally, if $|x|\le 2^{-L}$, then $0\in\mathcal V_L$, so
\[
\operatorname{dist}\bigl(\Gamma_0(x),\mathcal V_L\bigr)^2
\le
\|\Gamma_0(x)\|_{\mathcal H_\beta}^2.
\]
If $x\neq 0$, write $x=|x|\sigma$ with $\sigma\in\{-1,1\}$. Then
\[
\|\Gamma_0(x)\|_{\mathcal H_\beta}
=
\|T_{|x|}\Gamma_0(\sigma)\|_{\mathcal H_\beta}
=
|x|^{p/2}\|\Gamma_0(\sigma)\|_{\mathcal H_\beta}.
\]
Therefore
\[
\|\Gamma_0(x)\|_{\mathcal H_\beta}^2
\le
C_{p,k}|x|^p
\le
C_{p,k}2^{-Lp}.
\]
The same is trivial when $x=0$. This proves
\eqref{eq:dyadic-shell-bound-log}.
\end{proof}

\begin{lemma}\label{lem:centered-gram-log}
Let
\(
A=\{\boldsymbol{x}_1,\dots,\boldsymbol{x}_m\}
\)
be an equilateral set in $(\mathbb R^n,\|\cdot\|_p)$, and normalize the common distance to be $1$. After translation,
assume
\(
\boldsymbol{x}_1=\boldsymbol{0}.
\)
For each coordinate $t\in[n]$ and $s\in[-1,1]$, define
\[
\Theta_t(s):=
\bigl(\sqrt{c_{p,k}}\,\Gamma_0(s),\,M_{+,0}^{1/2}\boldsymbol w_0(s)\bigr)
\in
\mathcal H_\beta\oplus\mathbb R^{4k},
\]
and
\[
U_t(s):=M_{-,0}^{1/2}\boldsymbol w_0(s)\in\mathbb R^{4k}.
\]
For each $\boldsymbol{x}_i\in A$, define
\(
\Theta_i
:=
\bigoplus_{t=1}^n \Theta_t(x_{i,t}),
\)
\(
U_i
:=
\bigoplus_{t=1}^n U_t(x_{i,t}).
\)
Let
\(
\overline{\Theta}:=\frac1m\sum_{i=1}^m \Theta_i,
\)
\(
\overline{U}:=\frac1m\sum_{i=1}^m U_i,
\)
and set
\(
\widetilde\Theta_i:=\Theta_i-\overline{\Theta},\)
\(
\widetilde U_i:=U_i-\overline{U}.
\)
Let
\(
R:=\bigl(\langle \widetilde\Theta_i,\widetilde\Theta_j\rangle\bigr)_{i,j=1}^m\) and
\(
X:=\bigl(\langle \widetilde U_i,\widetilde U_j\rangle\bigr)_{i,j=1}^m.
\)
Then $X$ is positive semidefinite, $\operatorname{rank}(X)\le 4kn$ and
\(R=\frac12 C_m+X\), where
\(
C_m:=I_m-\frac{1}{m}J_m.
\)
\end{lemma}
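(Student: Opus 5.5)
The plan is to compare the quadratic forms of both sides on \(\boldsymbol 1^{\perp}\), and then use the fact that both sides are double-centered matrices. First note that \(X\) is a Gram matrix of the vectors \(\widetilde U_i\in\bigoplus_{t=1}^n\mathbb R^{4k}\cong\mathbb R^{4kn}\). Hence it is positive semidefinite, and its rank is at most the dimension of the ambient space, \(4kn\).

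For the identity, I would first fix \(\boldsymbol\lambda\in\boldsymbol 1^{\perp}\). Since \(\sum_i\lambda_i=0\), the centering disappears:
\[
\sum_i\lambda_i\widetilde\Theta_i=\sum_i\lambda_i\Theta_i, \qquad \sum_i\lambda_i\widetilde U_i=\sum_i\lambda_i U_i.
\]
Therefore
\[
\boldsymbol\lambda^{\mathsf T}R\boldsymbol\lambda-\boldsymbol\lambda^{\mathsf T}X\boldsymbol\lambda
=\sum_{t=1}^n\Bigl(c_{p,k}\Bigl\|\sum_i\lambda_i\Gamma_0(x_{i,t})\Bigr\|^2+\Bigl\|\sum_i\lambda_iM_{+,0}^{1/2}\boldsymbol w_0(x_{i,t})\Bigr\|_2^2-\Bigl\|\sum_i\lambda_iM_{-,0}^{1/2}\boldsymbol w_0(x_{i,t})\Bigr\|_2^2\Bigr),
\]
using orthogonality of the direct sum over coordinates.

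Next I would apply Proposition~\ref{prop:zero-anchored-exact-bridge-log}(i) to each coordinate \(t\). It applies because \(\boldsymbol x_1=\boldsymbol0\) and all distances equal \(1\), so every coordinate lies in \([-1,1]\). Repeated values among \(x_{1,t},\dots,x_{m,t}\) are handled by merging coefficients, exactly as in the proof of Theorem~\ref{thm:KusnerConj}; the merged coefficients still sum to zero. Each summand then equals \(-\frac12\sum_{i,j}\lambda_i\lambda_j|x_{i,t}-x_{j,t}|^p\). Summing over \(t\) gives
\[
-\tfrac12\sum_{i,j}\lambda_i\lambda_j\|\boldsymbol x_i-\boldsymbol x_j\|_p^p=-\tfrac12\,\boldsymbol\lambda^{\mathsf T}(J_m-I_m)\boldsymbol\lambda=\tfrac12\|\boldsymbol\lambda\|^2=\tfrac12\,\boldsymbol\lambda^{\mathsf T}C_m\boldsymbol\lambda.
\]

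To pass from the quadratic-form identity to a matrix identity, I would use that \(R\), \(X\) and \(C_m\) are all symmetric and annihilate \(\boldsymbol 1\). This holds for \(R\) and \(X\) because \(\sum_i\widetilde\Theta_i=0\) and \(\sum_i\widetilde U_i=0\). So \(R-X-\frac12C_m\) is a symmetric matrix with \(\boldsymbol 1\) in its kernel, and its quadratic form vanishes on \(\boldsymbol 1^{\perp}\). By polarization it vanishes on \(\boldsymbol1^\perp\times\boldsymbol1^\perp\), so the matrix is zero. Equivalently, writing any \(\boldsymbol v=C_m\boldsymbol v+c\boldsymbol1\), the bilinear form evaluated at \(\boldsymbol v\) reduces to its value at \(C_m\boldsymbol v\). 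This gives \(R=\frac12C_m+X\).

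There is no genuine obstacle. The only points to check carefully are that each coordinate lies in \([-1,1]\), so the proposition applies, and that the centered Gram matrices annihilate \(\boldsymbol 1\), which is what lets the identity on \(\boldsymbol1^\perp\) extend to all of \(\mathbb R^m\).
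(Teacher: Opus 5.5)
Your proposal is correct and follows the paper's proof essentially step for step: $X$ is a Gram matrix in $(\mathbb R^{4k})^n$; the identity $\boldsymbol\lambda^{\mathsf T}(R-X)\boldsymbol\lambda=\frac12\|\boldsymbol\lambda\|_2^2=\boldsymbol\lambda^{\mathsf T}\bigl(\tfrac12 C_m\bigr)\boldsymbol\lambda$ on $\boldsymbol 1^\perp$ comes from applying Proposition~\ref{prop:zero-anchored-exact-bridge-log}(i) coordinatewise and using equilaterality; and polarization, together with the fact that $R-X-\frac12C_m$ annihilates $\boldsymbol 1$, upgrades this to the matrix identity. Your extra checks (that every coordinate lies in $[-1,1]$, and merging repeated coordinate values) are harmless bookkeeping that the paper leaves implicit.
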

\begin{proof}[Proof of Lemma~\ref{lem:centered-gram-log}]
Since each $\widetilde U_i$ belongs to $(\mathbb R^{4k})^n$, the matrix $X$ is a
Gram matrix, hence positive semidefinite, and
\(
\operatorname{rank}(X)\le 4kn.
\)

Let $\boldsymbol{\lambda}=(\lambda_1,\dots,\lambda_m)\in \boldsymbol{1}^\perp$. Here and below, we write
\[
\|\boldsymbol{\lambda}\|_2^2:=\sum_{i=1}^m\lambda_i^2.
\] Since centering does
not change the sums
\(
\sum_{i=1}^m \lambda_i\Theta_i\)
and
\(
\sum_{i=1}^m \lambda_iU_i,
\)
we can apply Proposition~\ref{prop:zero-anchored-exact-bridge-log}
coordinatewise and sum over all coordinates. Because $A$ is equilateral of common
distance $1$,
\[
-\frac12\sum_{i,j=1}^m \lambda_i\lambda_j \|\boldsymbol{x}_i-\boldsymbol{x}_j\|_p^p
=
\frac12\|\boldsymbol{\lambda}\|_2^2.
\]
On the other hand, the Proposition~\ref{prop:zero-anchored-exact-bridge-log}~(i) gives
\[
\frac12\|\boldsymbol{\lambda}\|_2^2
=
\left\|\sum_{i=1}^m \lambda_i\widetilde\Theta_i\right\|^2
-
\left\|\sum_{i=1}^m \lambda_i\widetilde U_i\right\|^2
=
\boldsymbol{\lambda}^\top (R-X)\boldsymbol{\lambda}.
\]
Since also for \(\boldsymbol{\lambda}\in\boldsymbol{1}^\perp\), we have
\[
\boldsymbol{\lambda}^\top \left(\frac12 C_m\right)\boldsymbol{\lambda}=\frac12\|\boldsymbol{\lambda}\|_2^2
\]
which further yields that
\[
\boldsymbol{\lambda}^\top\left(R-X-\frac12 C_m\right)\boldsymbol{\lambda}=0
\]
for all \(\boldsymbol{\lambda}\in \boldsymbol{1}^\perp.\) Now the matrix
\(
A_0:=R-X-\frac12 C_m
\)
is symmetric and satisfies $A_0\boldsymbol{1}=0$, because $R$, $X$, and $C_m$ all
annihilate $\boldsymbol{1}$. By polarization, $A_0$ vanishes on
$\boldsymbol{1}^\perp\times \boldsymbol{1}^\perp$, and hence $A_0=0$. This proves the lemma.
    
\end{proof}
\subsection{Proof of the logarithmic upper bound}

We are now ready to prove Theorem~\ref{thm:log-upper-4k-4k+2}.

\begin{proof}[Proof of Theorem~\ref{thm:log-upper-4k-4k+2}]
Let
\(
A=\{\boldsymbol{x}_1,\dots,\boldsymbol{x}_m\}
\)
be an equilateral set in $(\mathbb R^n,\|\cdot\|_p)$. By scaling, we may assume that the common distance is $1$.
After translation, we may assume that
\(
\boldsymbol{x}_1=\boldsymbol{0}.
\)
Then for every $\boldsymbol{x}_i=(x_{i,1},\dots,x_{i,n})\in A$ with \(x_{i,t}\in[-1,1]\), we have
\begin{equation}\label{eq:coordinate-sum-bound-log}
\sum_{t=1}^n |x_{i,t}|^p\le 1.
\end{equation}
Define $\Theta_i,U_i,\widetilde\Theta_i,\widetilde U_i,R,X$ exactly as in
Lemma~\ref{lem:centered-gram-log}. By that lemma, we can see \(R=\frac12 C_m+X\) with
\begin{equation}\label{eq:gram-decomposition-log}
\operatorname{rank}(X)\le 4kn.
\end{equation}
Fix $\delta>0$ so that
\begin{equation}\label{eq:delta-choice-log}
c_{p,k}2^p\delta^2\le \frac18.
\end{equation}
For an integer $L\ge 1$, let $\mathcal V_L$ be given by
Lemma~\ref{lem:dyadic-shell-approx-log}, and define
\[
Z_0(L):=
\mathcal V_L \oplus M_{+,0}^{1/2}\mathbb R^{4k}
\subseteq
\mathcal H_\beta\oplus\mathbb R^{4k}.
\]
Then
\(
\dim Z_0(L)\le C'_{p,k}L
\)
for some constant $C'_{p,k}>0$. Set
\(
Z(L):=\bigoplus_{t=1}^n Z_0(L).
\)
Hence
\begin{equation}\label{eq:dim-ZL-log}
\dim Z(L)\le C'_{p,k}\,nL.
\end{equation}
Fix $i\in[m]$. Since $M_{+,0}^{1/2}\mathbb R^{4k}\subseteq Z_0(L)$, the distance from the
$t$-th coordinate block of $\Theta_i$ to $Z_0(L)$ is controlled only by the Hilbert
component. Thus Lemma~\ref{lem:dyadic-shell-approx-log} yields
\[
\operatorname{dist}(\Theta_i,Z(L))^2
\le
\sum_{t=1}^n
c_{p,k}\,\operatorname{dist}\bigl(\Gamma_0(x_{i,t}),\mathcal V_L\bigr)^2\le
\sum_{t=1}^n
\Bigl(
c_{p,k}2^p\delta^2 |x_{i,t}|^p + c_{p,k}C_{p,k}2^{-Lp}
\Bigr).
\]
Using \eqref{eq:coordinate-sum-bound-log} and \eqref{eq:delta-choice-log}, we obtain
\[
\operatorname{dist}(\Theta_i,Z(L))^2
\le
\frac18 + C''_{p,k}\,n2^{-Lp}
\]
for some constant $C''_{p,k}>0$. Choose
\(
L:=\max\left\{1,\left\lceil \frac1p\log_2(8C''_{p,k}n)\right\rceil\right\}.
\)
Then
\(
C''_{p,k}n2^{-Lp}\le \frac18,
\)
and hence
\begin{equation}\label{eq:theta-near-ZL-log}
\max_{1\le i\le m}\operatorname{dist}(\Theta_i,Z(L))^2\le \frac14.
\end{equation}
Define
\(
W:=\operatorname{span}(\overline\Theta,Z(L)).
\)
Then
\(
\dim W\le \dim Z(L)+1.
\)
Moreover, for each $i$,
\[
\operatorname{dist}(\widetilde\Theta_i,W)\le \operatorname{dist}(\Theta_i,Z(L)).
\]
Indeed, if $z\in Z(L)$, then $z-\overline\Theta\in W$, and
\(
\widetilde\Theta_i-(z-\overline\Theta)=\Theta_i-z.
\)
Now apply Proposition~\ref{prop:projection-logn} to the centered family
\(
\{\widetilde\Theta_1,\dots,\widetilde\Theta_m\},
\)
with
\(
a=2,
\) \(
r\le 4kn,
\)
and
\(
s=\dim W.
\)
Using \eqref{eq:gram-decomposition-log}, we get
\[
\max_{1\le i\le m}\operatorname{dist}(\widetilde\Theta_i,W)^2
\ge
\frac{m-4kn-1-\dim W}{2m}.
\]
Combining this with \eqref{eq:theta-near-ZL-log}, we obtain
\[
\frac{m-4kn-1-\dim W}{2m}\le \frac14.
\]
Since $\dim W\le \dim Z(L)+1$, this gives
\[
\frac{m-4kn-\dim Z(L)-2}{2m}\le \frac14.
\]
Therefore
\(
m\le 2\bigl(4kn+\dim Z(L)+2\bigr).
\)
Using \eqref{eq:dim-ZL-log}, we conclude that
\[
m\le 2\bigl(4kn+C'_{p,k}nL+2\bigr).
\]
Since
\(
L=\max\left\{1,\left\lceil \frac1p\log_2(8C''_{p,k}n)\right\rceil\right\},
\)
we have
\[
m\le C_{p,k}n\log(2n)
\]
for some \(C_{p,k}>0\). This finishes the proof.

\end{proof}

\section{Equilateral sets in \((\T^{n},d_p)\): improved upper bounds}
In this section we prove Theorem~\ref{thm:torus-nlogn-ple2}.  Compared with the
Euclidean problem, the toroidal problem has an additional obstruction: in one
dimension the cyclic distance
\(
d_{\mathbb T}(x,y)=\min\{|x-y|,1-|x-y|\}
\)
is only piecewise a real-line distance.  It switches branches at the antipodal
point \(y=x+1/2\), where \(d_{\mathbb T}(x,y)^p\) has a corner.  This loss of
smoothness produces an additional wrap-around contribution in the associated
kernel, so the sign-controlled kernel decompositions used in \(\mathbb R^n\) do
not apply directly.  Our proof first shows that the small-distance case reduces
to the Euclidean setting, so that it remains to treat equilateral sets with
large common distance.  We then reduce the argument to a one-dimensional
estimate for the anchored torus kernel.  This kernel is split into a real-line
part and a genuinely toroidal correction, the latter capturing precisely the
corner contribution near the antipodal point.  The real-line part is controlled
by the Hilbert-space approximation developed earlier, while the toroidal
correction is reduced to one-sided corner kernels and estimated by low-rank
approximation with small positive trace error.  After summing over the
coordinates, this gives the required rank comparison and proves the theorem.

\subsection{Equilateral sets in \((\mathbb{R}^{n},\|\cdot\|_p)\) and \((\T^{n},d_p)\)}\label{sec: equil R T}

We begin with a simple embedding observation showing that the torus problem is at
least as hard as the corresponding Euclidean problem.
We also explain why on the torus it is enough to treat the regime of
large common distance.
\begin{lemma}\label{lem:small-distance-transfer}
Let $n\ge 1$ be an integer. For $1\le p<\infty$, the following statements hold:
	
	\begin{enumerate}
		\item[\textup{(1)}] If \(A\subseteq(\T^n,d_p)\) is an equilateral set with common distance
		\(\lambda<1/3\), then \(A\) is isometric to an equilateral set in
		\((\mathbb{R}^n,\|\cdot\|_p)\).
		
		\item[\textup{(2)}] If \(B\subseteq(\mathbb{R}^n,\|\cdot\|_p)\) is an equilateral set with
		common distance \(\lambda\le 1/2\), then \(B\) embeds isometrically into
		\((\T^n,d_p)\).
	\end{enumerate}
\end{lemma}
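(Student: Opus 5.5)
Part (2) is the easier direction, so I would do it first. Given $B\subseteq\mathbb R^n$ equilateral with common distance $\lambda\le 1/2$, translate so one point $\boldsymbol b_0$ is the origin. Then every coordinate difference between two points of $B$ satisfies $|b_i-b'_i|\le \|\boldsymbol b-\boldsymbol b'\|_p=\lambda\le 1/2$. Map $\boldsymbol b\mapsto \boldsymbol b \bmod \mathbb Z^n$. For real numbers $u,v$ with $|u-v|\le 1/2$, the cyclic distance of their images equals $\min\{|u-v|,1-|u-v|\}=|u-v|$. Hence every coordinate distance is preserved, and so is $d_p$. This gives an isometric embedding. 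Injectivity follows because distinct points of $B$ keep distance $\lambda>0$.

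For part (1), let $A\subseteq\mathbb T^n$ be equilateral with common distance $\lambda<1/3$. I would fix one point $\boldsymbol a_0\in A$ and lift it to $\tilde{\boldsymbol a}_0\in\mathbb R^n$. Each other point $\boldsymbol a\in A$ I would lift coordinatewise to the unique representative $\tilde{\boldsymbol a}$ with $|\tilde a_t-\tilde a_{0,t}|=d_{\mathbb T}(a_t,a_{0,t})\le\lambda<1/3$. Uniqueness holds because this distance is less than $1/2$. With these lifts, $\|\tilde{\boldsymbol a}-\tilde{\boldsymbol a}_0\|_p=d_p(\boldsymbol a,\boldsymbol a_0)=\lambda$.

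The remaining point is to check the pairs $\boldsymbol a,\boldsymbol b$ that do not involve $\boldsymbol a_0$. In each coordinate, $|\tilde a_t-\tilde b_t|\le|\tilde a_t-\tilde a_{0,t}|+|\tilde b_t-\tilde a_{0,t}|<2/3$.
\begin{itemize}
\item If $|\tilde a_t-\tilde b_t|\le 1/2$, the cyclic distance equals the real distance.
\item If $1/2<|\tilde a_t-\tilde b_t|<2/3$, the cyclic distance is $1-|\tilde a_t-\tilde b_t|$, which is strictly smaller than the real distance.
\end{itemize}
Also $d_{\mathbb T}(a_t,b_t)\le d_p(\boldsymbol a,\boldsymbol b)=\lambda<1/3$. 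So in the second case $|\tilde a_t-\tilde b_t|>2/3$, a contradiction with the bound above. Hence the second case never occurs. Every coordinate of $\tilde{\boldsymbol a}-\tilde{\boldsymbol b}$ then has absolute value $d_{\mathbb T}(a_t,b_t)$, so $\|\tilde{\boldsymbol a}-\tilde{\boldsymbol b}\|_p=d_p(\boldsymbol a,\boldsymbol b)=\lambda$.

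Thus the lift is an isometry onto an equilateral set in $(\mathbb R^n,\|\cdot\|_p)$. The only delicate step is this triangle-inequality case analysis, and it is exactly where the threshold $1/3$ is used: $2\lambda<2/3<1-\lambda$.
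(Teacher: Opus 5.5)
Your proof is correct and takes essentially the same approach as the paper: the paper translates a point of $A$ to $(1/2,\dots,1/2)$, so every coordinate lies in $(1/6,5/6)$, which is exactly your lift around a base point; it then uses the same observation that real coordinate differences are below $2/3$ while cyclic distances are below $1/3$, which rules out the wrap-around branch. Part (2) matches the paper's argument via the quotient map $\mathbb R^n\to\T^n$; your preliminary translation there is harmless but not needed.
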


\begin{proof}
	For (1), translate \(A\) so that
	$\boldsymbol{x}^\ast=(1/2,\dots,1/2)\in A.$
	Let \(\boldsymbol{x}=(x_1,\dots,x_n)\in A\setminus\{\boldsymbol{x}^\ast\}\). 
	Since
	$d_{\T}(x_i,1/2)\le d_p(\boldsymbol{x},\boldsymbol{x}^\ast)= \lambda<1/3$
	for every \(i\in[n]\), we have \(x_i\in(1/6,5/6)\). Now let \(\boldsymbol{x},\boldsymbol{y}\in A\) be distinct. Then for every \(i\in[n]\),
	$1-|x_i-y_i|>1/3$,
	whereas
	$	d_{\T}(x_i,y_i)\le d_p(\boldsymbol{x},\boldsymbol{y})<1/3.$
	Thus the second branch in the definition of \(d_{\T}\) cannot occur, and so
	$d_{\T}(x_i,y_i)=|x_i-y_i|$
	for every \(i\in[n]\). Therefore \(A\), viewed inside the cube
	\((1/6,5/6)^n\subseteq\mathbb{R}^n\), has the same pairwise distances as in
	\((\T^n,d_p)\). This proves (1).
	
	For (2), let
\(
\pi:\mathbb R^n\to\T^n
\)
be the coordinatewise quotient map. Let \(\boldsymbol{x},\boldsymbol{y}\in B\) be distinct. Since
	$|x_i-y_i|\le\|\boldsymbol{x}-\boldsymbol{y}\|_p= \lambda\le 1/2$
	for every \(i\in[n]\), we have
	$d_{\T}(\pi(x_i),\pi(y_i))=|x_i-y_i|.$
	It follows that
	$d_p(\pi(\boldsymbol{x}),\pi(\boldsymbol{y}))
	=
	\|\boldsymbol{x}-\boldsymbol{y}\|_p.$
	Thus \(\pi|_B\) is an isometric embedding of \(B\) into \((\T^n,d_p)\).
\end{proof}

As an immediate consequence of Lemma~\ref{lem:small-distance-transfer}~(2),
every equilateral set in \((\mathbb{R}^n,\|\cdot\|_p)\) can be rescaled and
embedded isometrically into \((\T^n,d_p)\) with the same cardinality. Hence
\[
e(\ell^n_p)\le e_p(\T^n).
\]
	
	Lemma~\ref{lem:small-distance-transfer} also shows that the case of small common distance can be reduced
	to the case where the common distance is \(1/4\). Indeed, let
	\(A\subseteq(\T^n,d_p)\) be an equilateral set with common distance
	\(0<\lambda\le 1/4\). By Lemma~\ref{lem:small-distance-transfer}~(1), \(A\)
	is isometric to an equilateral set \(B\subseteq(\mathbb{R}^n,\|\cdot\|_p)\)
	with the same common distance \(\lambda\). Rescaling \(B\) by the factor
	\((4\lambda)^{-1}\), we obtain an equilateral set in
	\((\mathbb{R}^n,\|\cdot\|_p)\) with common distance \(1/4\). By
	Lemma~\ref{lem:small-distance-transfer}~(2), this rescaled set embeds
	isometrically into \((\T^n,d_p)\). Thus \(A\) has the same cardinality as an
	equilateral set in \((\T^n,d_p)\) with common distance \(1/4\).
	Consequently, in proving upper bounds for equilateral sets in
	\((\T^n,d_p)\), we may and will assume that
	$\lambda\ge \frac14.$

\subsection{Reduction to a one-dimensional statement}

We write \(\T=\R/\Z\), equipped with the circular distance
\[
d_{\T}(x,y)=\min\{|x-y|,\ 1-|x-y|\},
\]
where \(x,y\) are represented in \([0,1)\). Equivalently, identifying
\(\T\) with \([-1/2,1/2)\), we write
\[
\rho(x):=d_{\T}(x,0)=|x|,
\]
where \(x\) denotes its representative in \([-1/2,1/2)\). Thus
\(d_{\T}(x,y)=\rho(x-y)\), with \(x-y\) taken modulo \(1\). For
\(\boldsymbol{x},\boldsymbol{y}\in\T^n\), set
\[
d_p(\boldsymbol{x},\boldsymbol{y})
=
\left(\sum_{t=1}^n \rho(x_t-y_t)^p\right)^{1/p}.
\]

Throughout this section, for a real symmetric matrix \(M\), we write
\[
\operatorname{tr}_+(M):=\sum_{\lambda_i(M)>0}\lambda_i(M),
\]
where eigenvalues are counted with multiplicity. Define
\[
\vartheta_p :=
\begin{cases}
0,&1\le p\le 2,\\[1mm]
\dfrac{p-2}{3p-2},&p>2.
\end{cases}
\]
Notice that, for \(p>2\),
\(
\frac1{1-\vartheta_p}
=
\frac32-\frac1p.
\)

For \(x,y\in\T\), define the anchored kernel
\[
B_p(x,y):=
\frac12\bigl(\rho(x)^p+\rho(y)^p-\rho(x-y)^p\bigr).
\]

The key auxiliary tool is the following one-dimensional proposition.

\begin{prop}\label{prop:one-dim-ple2}
Let \(p\ge1\). Let
\(
X=\{x_1,\dots,x_m\}\subseteq\T
\)
and let \(N\ge2\). Define
\(
M_{ij}:=B_p(x_i,x_j)\) for
\(1\le i,j\le m.
\)
For every \(\varepsilon>0\), there exists \(C_{p,\varepsilon}>0\) such that there
are real symmetric matrices \(L,R\in\R^{m\times m}\) satisfying
\begin{enumerate}
\item[\textup{(1)}]
\(
M\preceq L+R.
\)

\item[\textup{(2)}]
\(
\operatorname{rank}L
\le
C_{p,\varepsilon}\bigl(m^{\vartheta_p}+\log(2m)+\log(2N)\bigr).
\)

\item[\textup{(3)}]
\(
\operatorname{tr}_+(R)
\le
\varepsilon\sum_{i=1}^m \rho(x_i)^p
+
C_{p,\varepsilon}\frac{m}{N}.
\)
\end{enumerate}
\end{prop}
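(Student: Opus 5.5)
Identify $\T$ with $[-1/2,1/2)$, so $\rho(x)=|x|$ and $\rho(x-y)=|x-y|$ except when the real difference $x-y$ leaves $[-1/2,1/2]$. In that case $\rho(x-y)=1-|x-y|$. The plan is to split $M$ into a real-line part and a wrap-around correction:
\[
B_p(x,y)=B^{\mathbb R}_p(x,y)+W_p(x,y),
\]
where
\[
B^{\mathbb R}_p(x,y)=\tfrac12(|x|^p+|y|^p-|x-y|^p)
\]
and
\[
W_p(x,y)=\tfrac12\bigl(|x-y|^p-(1-|x-y|)^p\bigr)\mathbbm 1_{\{|x-y|>1/2\}}.
\]
Note that $W_p$ is nonzero only when $x,y$ have opposite signs and lie near the antipode $\pm1/2$.

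\textbf{Real-line part.} Rescale by $2$ so that all points lie in $[-1,1]$. The matrix $(B^{\mathbb R}_p(x_i,x_j))$ is the anchored (at $0$) version of $-\frac12|x-y|^p$. Since the anchor $0$ plays the role of $\boldsymbol x_1=\boldsymbol 0$ in Lemma~\ref{lem:centered-gram-log}, Proposition~\ref{prop:zero-anchored-exact-bridge-log}(i), applied to $X\cup\{0\}$ with $\lambda$ having an extra coordinate at $0$, writes it as
\[
c_{p,k}\langle\Gamma_0(x_i),\Gamma_0(x_j)\rangle+\langle M_{+,0}^{1/2}\boldsymbol w_0(x_i),M_{+,0}^{1/2}\boldsymbol w_0(x_j)\rangle-\langle M_{-,0}^{1/2}\boldsymbol w_0(x_i),M_{-,0}^{1/2}\boldsymbol w_0(x_j)\rangle .
\]
Here $\Gamma_0(0)=0$ and $\boldsymbol w_0(0)=0$. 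Drop the negative term, since dropping it only increases the matrix in Loewner order. Put the $M_{+,0}$ block (rank $\le4k$) into $L$. For the Hilbert part, use Lemma~\ref{lem:dyadic-shell-approx-log} with $\delta$ chosen so that $c_{p,k}2^p\delta^2\le\varepsilon/4$, and with $L_0\asymp\log(2N)$ shells. Let $P$ be the projection onto $\mathcal V_{L_0}$. Using $G\preceq 2PGP+2(I-P)G(I-P)$ at the level of Gram vectors (i.e.\ $\|a+b\|^2\le2\|a\|^2+2\|b\|^2$), the projected Gram part goes into $L$, with rank $O(\log 2N)$. The remainder goes into $R$; its trace is $\sum_i\operatorname{dist}(\Gamma_0(x_i),\mathcal V_{L_0})^2\le\frac{\varepsilon}{2}\sum\rho(x_i)^p+C m2^{-L_0p}$, which is $\le\frac{\varepsilon}{2}\sum\rho(x_i)^p+Cm/N$. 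For $p<2$ one uses $k=0$, and $p\in[4k+2,4k+4]$ needs a separate bridge. The plan there is to adapt the primitive of Lemma~\ref{lem:zero-anchored-primitive-log} with $2k+1$ integrations, so that $\beta=p-4k-2$, and again obtain a Hilbert term plus a finite-rank term. Only the positive parts matter, since we only need an upper bound in Loewner order.

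\textbf{Corner part.} This is the main obstacle. Write $W_p$ via the real function
\[
g(s)=\tfrac12\bigl(s^p-(1-s)^p\bigr)_{s>1/2},
\]
evaluated at $s=x-y$ for $x>0>y$, plus its mirror image. Substituting $u=1/2-x$ and $v=1/2+y$, both near $0^+$, gives $s=1-u-v$ and $g=\tfrac12((1-u-v)^p-(u+v)^p)$. The smooth piece $(1-u-v)^p-2^{-p}$ expands in powers of $u,v$ and is approximated by low-rank terms with small error. The nonsmooth one-sided corner kernel $-(u+v)^p$ on $u,v\ge0$ is the crux. For $1\le p\le2$ I would use Bernstein-type representations, $(u+v)^p=c\int_0^\infty(1-e^{-t(u+v)}-\dots)t^{-1-p}\,dt$, whose exponential factorization $e^{-tu}e^{-tv}$ is a sum of rank-one PSD pieces with the correct sign. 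Then only a dyadic discretization in $t$ with $O(\log m+\log N)$ nodes is needed. The remaining error is controlled in $\operatorname{tr}_+$, because the point masses near the antipode have $\rho(x_i)\approx1/2$, so that the term $\varepsilon\sum\rho^p$ absorbs them. For $p>2$ that sign structure fails, with terms of mixed sign of order $(u+v)^p$. Here I would split the indices by a threshold $u<\tau$. The points with $u\ge\tau$ get a polynomial approximation of degree $\approx m^{\vartheta_p}$, placed in $L$. For points with $u<\tau$ the kernel has size $O(\tau^p)$, and Gershgorin gives $\operatorname{tr}_+\lesssim m\tau^p$, wait — more precisely the positive trace of that block is at most its trace, which is $\lesssim m\tau^{p}$ once the diagonal is controlled. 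Optimizing $\tau$ against the polynomial degree is what should yield the exponent $\vartheta_p=\frac{p-2}{3p-2}$; I expect this balancing to be the hardest and least certain step. Finally, set $L$ as the sum of all low-rank pieces and $R$ as the sum of all remainders. Subadditivity of $\operatorname{tr}_+$ then gives (3), after rescaling $\varepsilon$.
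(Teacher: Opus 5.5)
Your overall architecture ($M=B_{\R}+W$, then low rank plus small $\operatorname{tr}_+$ for each piece, then subadditivity) matches the paper's, and your real-line part is sound. Adjoining the anchor $0$, where $\Gamma_0(0)=0$ and $\boldsymbol w_0(0)=0$, cleanly replaces Lemma~\ref{lem:centered-domination-to-domination}. For $p\in(4k+2,4k+4)$, a $(2k+1)$-fold bridge gives the Hilbert term with a negative sign, so dropping it does the job of Lemma~\ref{lem cnd}. The gap is in the corner part, which is the real content of the statement.

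First, you misidentify where $W$ lives and where it is singular. For $x,y\in[-1/2,1/2)$, $W(x,y)\neq 0$ exactly when $x,y$ have opposite signs and $|x-y|>1/2$. In your variables this is the whole triangle $\{u,v\ge 0,\ u+v<1/2\}$, not a neighbourhood of $u=v=0$. For instance, $x=\tfrac12-\eta$, $y=-2\eta$ gives a nonzero entry with $v\approx\tfrac12$ and $\rho(y)=2\eta$. On this triangle $W=\phi\bigl((\tfrac12-u-v)_+\bigr)$ with $\phi(t)=\tfrac12\bigl((\tfrac12+t)^p-(\tfrac12-t)^p\bigr)$, where $\phi(0)=0$ and $\phi'(0)=p2^{1-p}\neq 0$. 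So $W$ has a kink along the entire line $x-y=\tfrac12$. This ramp, the paper's $\rho_p((a_i+b_j-1)_+)$, is the genuine obstruction. Your ``smooth piece'' $(1-u-v)^p-2^{-p}$ is smooth only before it is multiplied by the indicator of $\{u+v<1/2\}$. Neither polynomial nor exponential expansions in $(u,v)$ approximate a ramp with small rank in Schatten-$1$ norm: degree-$d$ polynomials match a kink only to accuracy of order $1/d$, and entrywise errors of that size give merely $\|E\|_{S_1}\le Cm^{3/2}/d$. By contrast, the singularity of $(u+v)^p$ at $u=v=0$, which you call the crux, is harmless. In the paper it enters only through the bounded variation of $\rho_p'$ in Lemma~\ref{lem:volterra}.

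Second, the sign and trace arguments do not apply to this matrix.
\begin{itemize}
\item Because $W$ vanishes on same-sign pairs, $W(X)=\bigl(\begin{smallmatrix}0&D\\ D^{\mathsf T}&0\end{smallmatrix}\bigr)$ has eigenvalues $\pm\sigma_i(D)$. Any remainder of this shape has trace $0$ and $\operatorname{tr}_+$ equal to the Schatten-$1$ norm of its block.
\item A factorisation $e^{-tu_i}e^{-tv_j}$ of the entries of $D$ yields $\bigl(\begin{smallmatrix}0&ab^{\mathsf T}\\ ba^{\mathsf T}&0\end{smallmatrix}\bigr)$, which is indefinite. So there are no ``rank-one PSD pieces with the correct sign''.
\item ``The positive trace is at most the trace'' is backwards: $\operatorname{tr}_+\ge\operatorname{tr}$ always. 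Gershgorin only bounds each eigenvalue by a row sum, which gives $m^2\tau^p$, not $m\tau^p$.
\item Rows with $u<\tau$ meet columns with $v$ of every size, so those entries are not $O(\tau^p)$.
\item The kink couples near-antipodal points with points of arbitrarily small $\rho$, so the error cannot be paid for by ``$\rho\approx 1/2$''. You need an asymmetric bound of the form $\eta\bigl(r+\sum_j v_j^p\bigr)$.
\end{itemize}

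The missing ingredients are those of Lemma~\ref{lem:volterra}, Lemma~\ref{lem:rectangular-corner-rank} and Proposition~\ref{prop:strong-torus-corner}:
\begin{itemize}
\item Split according to whether $a_i,b_j$ are at least $\tfrac12$. This reduces to one-sided kernels $\rho_p((v_j-u_i)_+)$ in which one side has $\rho\ge 1/4$.
\item For $u,v\in[0,\tau]$, approximate such a kernel by truncating the Fourier series of a periodised ramp, giving rank $N$ and $\|E\|_{S_1}\le A\tau\sqrt{rs}/N$.
\item Run a dyadic decomposition in $v$, absorbing the error via $\tau_\ell\sqrt{r_\ell s_\ell}\le \tau_\ell^{2-p}r_\ell+s_\ell\tau_\ell^p$.
\end{itemize}
For $p<2$ the weights $\tau_\ell^{2-p}$ are summable over scales, and $p=2$ uses the linearity of $\rho_2$ on rows below the scale; this gives $\log(2m)$. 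For $p>2$ they are not summable. There $m^{\vartheta_p}$ comes from a bottom scale $\tau_0=m^{-2/(3p-2)}$ together with the rank allocation $N_\ell\propto \tau_\ell^{1-p/2}\alpha_\ell^{-1/2}$, where $\alpha_\ell\propto\tau_\ell^{-(p-2)/3}$. Your threshold-plus-polynomial-degree balancing supplies none of this, so the $p>2$ exponent is unsupported, as you suspected.
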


We first show how Proposition~\ref{prop:one-dim-ple2} implies
Theorem~\ref{thm:torus-nlogn-ple2}.

\begin{proof}[Proof of Theorem~\ref{thm:torus-nlogn-ple2} assuming Proposition~\ref{prop:one-dim-ple2}]
Let
\[
A=\{\boldsymbol{x}_0,\boldsymbol{x}_1,\dots,\boldsymbol{x}_m\}
\subseteq(\T^n,d_p)
\]
be an equilateral set with common distance \(\lambda\). By translation, assume
\(\boldsymbol{x}_0=\boldsymbol{0}\).

As explained at the end of Subsection~\ref{sec: equil R T},
we may
assume that
\(
\lambda\ge \frac14.
\)

For each coordinate \(t\in[n]\), put
\(
(B_t)_{ij}:=B_p(x_{i,t},x_{j,t})\) for \(1\le i,j\le m.
\)
Since
\(
d_p(\boldsymbol{x}_i,\boldsymbol{0})=\lambda\) and
\(
d_p(\boldsymbol{x}_i,\boldsymbol{x}_j)=\lambda\) for \(i\neq j,\)
we get
\begin{equation}\label{eq:torus-anchored-id-ple2}
\sum_{t=1}^n B_t
=
\frac{\lambda^p}{2}(I_m+J_m).
\end{equation}

Fix \(\varepsilon_*>0\), to be chosen later. Choose
\(
N:=\lceil K n\rceil+2,
\)
where \(K=K(p,\varepsilon_*)\) is sufficiently large that
\(
C_{p,\varepsilon_*}\frac{n}{N}\le \varepsilon_*4^{-p}.
\)
Apply Proposition~\ref{prop:one-dim-ple2} to each \(B_t\). Thus there are
symmetric matrices \(L_t,R_t\in\R^{m\times m}\) such that
\(
B_t\preceq L_t+R_t,
\)
\(
\operatorname{rank}L_t
\le
C_{p,\varepsilon_*}\bigl(m^{\vartheta_p}+\log(2m)+\log(2N)\bigr),
\)
and
\[
\operatorname{tr}_+(R_t)
\le
\varepsilon_*\sum_{i=1}^m \rho(x_{i,t})^p
+
C_{p,\varepsilon_*}\frac{m}{N}.
\]
Set
\(
L_B:=\sum_{t=1}^n L_t\) and
\(
R_B:=\sum_{t=1}^n R_t.
\)
Then, by \eqref{eq:torus-anchored-id-ple2},
\[
\frac{\lambda^p}{2}(I_m+J_m)\preceq L_B+R_B.
\]
Moreover, by subadditivity of \(\operatorname{tr}_+\),
\[
\operatorname{tr}_+(R_B)
\le
\varepsilon_*
\sum_{i=1}^m\sum_{t=1}^n \rho(x_{i,t})^p
+
C_{p,\varepsilon_*}\frac{mn}{N}
\le
2\varepsilon_*m\lambda^p,
\]
where we used \(\lambda\ge1/4\) and the choice of \(N\).

Let \(s=\operatorname{rank}L_B\), and let \(P\) be the orthogonal projection onto
\(\ker L_B\). Then \(\operatorname{rank}P\ge m-s\), and since \(PL_BP=0\),
\[
PR_BP
=
P(L_B+R_B)P
\succeq
P\frac{\lambda^p}{2}(I_m+J_m)P
\succeq
\frac{\lambda^p}{2}P.
\]
For every real symmetric matrix \(R\) and every orthogonal projection \(P\), one has
\[
\operatorname{tr}_+(R)\ge \operatorname{tr}(PRP).
\]
Therefore
\[
2\varepsilon_*m\lambda^p
\ge
\operatorname{tr}_+(R_B)
\ge
\operatorname{tr}(PR_BP)
\ge
\frac{\lambda^p}{2}(m-s).
\]
Taking \(\varepsilon_*=1/16\), we obtain
\(
m-s\le \frac{m}{4},
\)
and hence
\(
m\le 2s.
\)

On the other hand,
\[
s=\operatorname{rank}L_B
\le nC_{p,\varepsilon_*}\bigl(m^{\vartheta_p}+\log(2m)+\log(2N)\bigr).
\]
Thus
\begin{equation}\label{eq:torus-main-bootstrap}
m
\le
C_p n\bigl(m^{\vartheta_p}+\log(2m)+\log(2n)\bigr)
\end{equation}
for some constant \(C_{p}>0.\)

If \(1\le p\le2\), then \(\vartheta_p=0\). Hence
\(
m\le C_p n\log(2m)+C_p n\log(2n).
\)
The elementary bootstrap estimate
gives
\[
m\le C_p n\log(2n).
\]

Now suppose \(p>2\). If \(m\le n^{1/(1-\vartheta_p)}\), then the desired bound
already follows. Otherwise \(m>n^{1/(1-\vartheta_p)}\). Since \(\vartheta_p>0\),
we have
\[
\log(2m)+\log(2n)\le C_p m^{\vartheta_p}.
\]
Thus \eqref{eq:torus-main-bootstrap} gives
\(
m\le C_p n m^{\vartheta_p}.
\)
Consequently,
\[
m\le C_p n^{1/(1-\vartheta_p)}
=
C_p n^{3/2-1/p}.
\]
Since \(|A|=m+1\), the theorem follows after adjusting the constant \(C_p\).
\end{proof}

\subsection{Preliminaries for the one-dimensional proposition}

We first record a linear algebra observation.

\begin{lemma}\label{lem:centered-domination-to-domination}
Let \(A,D\in\R^{m\times m}\) be symmetric matrices and let
\(
C_m:=I_m-\frac1mJ_m.
\)
If
\(
C_mAC_m\preceq C_mDC_m,
\)
then there exists \(\boldsymbol{b}\in\R^m\) such that
\[
A\preceq D+\boldsymbol{1}\boldsymbol{b}^{\mathsf T}+\boldsymbol{b}\boldsymbol{1}^{\mathsf T}.
\]
Moreover, if \(C_mAC_m=C_mDC_m\), then \(\boldsymbol{b}\) may be chosen so that equality holds.
\end{lemma}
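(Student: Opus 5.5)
Set \(S:=D-A\), a symmetric matrix with \(C_mSC_m\succeq0\). The task is to find \(\boldsymbol b\in\R^m\) such that \(S+\boldsymbol 1\boldsymbol b^{\mathsf T}+\boldsymbol b\boldsymbol 1^{\mathsf T}\succeq0\). The plan is to decompose \(\R^m=\boldsymbol 1^{\perp}\oplus\operatorname{span}(\boldsymbol 1)\) and write \(S\) in block form with respect to this splitting. Put \(\boldsymbol u:=\boldsymbol 1/\sqrt m\) and \(C:=C_m\). Then
\[
S=CSC+CS\boldsymbol u\boldsymbol u^{\mathsf T}+\boldsymbol u\boldsymbol u^{\mathsf T}SC+(\boldsymbol u^{\mathsf T}S\boldsymbol u)\boldsymbol u\boldsymbol u^{\mathsf T}.
\]
The off-diagonal blocks are \(\boldsymbol c\boldsymbol u^{\mathsf T}+\boldsymbol u\boldsymbol c^{\mathsf T}\) with \(\boldsymbol c:=CS\boldsymbol u\). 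These already have the allowed form \(\boldsymbol 1\boldsymbol b^{\mathsf T}+\boldsymbol b\boldsymbol 1^{\mathsf T}\) up to scaling, since \(\boldsymbol u\) is a multiple of \(\boldsymbol 1\). The corner term \((\boldsymbol u^{\mathsf T}S\boldsymbol u)\boldsymbol u\boldsymbol u^{\mathsf T}\) also has this form, because \(\boldsymbol u\boldsymbol u^{\mathsf T}=\frac1{2m}(\boldsymbol 1\boldsymbol 1^{\mathsf T}+\boldsymbol 1\boldsymbol 1^{\mathsf T})\).

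Accordingly, I will choose \(\boldsymbol b\) to cancel both the off-diagonal blocks and the corner entry, and to add back a nonnegative multiple \(\gamma\ge0\) of \(\boldsymbol u\boldsymbol u^{\mathsf T}\). Concretely, take
\[
\boldsymbol b:=-\frac{1}{\sqrt m}\,\boldsymbol c+\frac{\gamma-\boldsymbol u^{\mathsf T}S\boldsymbol u}{2m}\,\boldsymbol 1 .
\]
Then \(\boldsymbol 1\boldsymbol b^{\mathsf T}+\boldsymbol b\boldsymbol 1^{\mathsf T}=-(\boldsymbol u\boldsymbol c^{\mathsf T}+\boldsymbol c\boldsymbol u^{\mathsf T})+(\gamma-\boldsymbol u^{\mathsf T}S\boldsymbol u)\boldsymbol u\boldsymbol u^{\mathsf T}\). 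This gives
\[
S+\boldsymbol 1\boldsymbol b^{\mathsf T}+\boldsymbol b\boldsymbol 1^{\mathsf T}=CSC+\gamma\,\boldsymbol u\boldsymbol u^{\mathsf T},
\]
which is positive semidefinite for any \(\gamma\ge0\), since \(CSC\succeq0\) by hypothesis. Taking \(\gamma=0\) is already enough. Rearranging, \(A\preceq D+\boldsymbol 1\boldsymbol b^{\mathsf T}+\boldsymbol b\boldsymbol 1^{\mathsf T}\), which is the claim.

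For the equality statement, suppose \(C_mAC_m=C_mDC_m\). Then \(CSC=0\), and with \(\gamma=0\) the construction above gives \(S+\boldsymbol 1\boldsymbol b^{\mathsf T}+\boldsymbol b\boldsymbol 1^{\mathsf T}=0\) exactly, that is, \(A=D+\boldsymbol 1\boldsymbol b^{\mathsf T}+\boldsymbol b\boldsymbol 1^{\mathsf T}\).

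The only step needing care is verifying the block decomposition identity. Expand \(S=(C+P)S(C+P)\) with \(P:=\boldsymbol u\boldsymbol u^{\mathsf T}=I-C\). Then check that \(PSC=\boldsymbol u(\boldsymbol u^{\mathsf T}SC)=\boldsymbol u\boldsymbol c^{\mathsf T}\), using the symmetry of \(S\) and \(C\), and that \(PSP=(\boldsymbol u^{\mathsf T}S\boldsymbol u)P\). There is no genuine obstacle; it is a direct linear-algebra construction.
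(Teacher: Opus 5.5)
Your proof is correct and is essentially the paper's argument: you decompose along \(\boldsymbol 1^{\perp}\oplus\operatorname{span}(\boldsymbol 1)\), use the \(\boldsymbol 1^{\perp}\)-component of \(\boldsymbol b\) to cancel the off-diagonal block, and use the \(\boldsymbol 1\)-component to handle the corner scalar. The only difference is that you write \(\boldsymbol b\) down explicitly and cancel the corner exactly (taking \(\gamma=0\)) instead of making that component ``sufficiently large'', so the equality case falls out of the same computation.
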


\begin{proof}[Proof of Lemma~\ref{lem:centered-domination-to-domination}]
Put \(S:=D-A\). The assumption says that \(C_mSC_m\succeq0\) on
\(\boldsymbol{1}^{\perp}\). Decompose
\[
\R^m=\boldsymbol{1}^{\perp}\oplus \operatorname{span}\{\boldsymbol{1}\}.
\]
The matrix \(\boldsymbol{1}\boldsymbol{b}^{\mathsf T}+\boldsymbol{b}\boldsymbol{1}^{\mathsf T}\) does not
change the restriction to \(\boldsymbol{1}^{\perp}\). By choosing the component
of \(\boldsymbol{b}\) inside \(\boldsymbol{1}^{\perp}\), we can cancel the off-diagonal block
between \(\boldsymbol{1}^{\perp}\) and \(\operatorname{span}\{\boldsymbol{1}\}\).
Then choosing the component of \(\boldsymbol{b}\) in the direction of \(\boldsymbol{1}\)
sufficiently large makes the remaining scalar block nonnegative. Hence
\[
S+\boldsymbol{1}\boldsymbol{b}^{\mathsf T}+\boldsymbol{b}\boldsymbol{1}^{\mathsf T}\succeq0,
\]
which is the desired domination. If \(C_mSC_m=0\), the same block computation
allows us to choose \(\boldsymbol{b}\) so that the resulting matrix is identically zero.
\end{proof}

\begin{lemma}\label{lem:real-line-part-ple2}
Let \(p\ge1\). For every \(\varepsilon>0\), there exists
\(C_{p,\varepsilon}>0\) such that the following holds. Let
\(x_1,\dots,x_m\in[-1/2,1/2]\), and let \(N\ge2\). Define
\[
(B_{\R})_{ij}
:=
\frac12\bigl(|x_i|^p+|x_j|^p-|x_i-x_j|^p\bigr).
\]
Then there are symmetric matrices \(L_{\R},R_{\R}\in\R^{m\times m}\) such that
\begin{enumerate}
\item[\textup{(1)}]
\(
B_{\R}\preceq L_{\R}+R_{\R}.
\)

\item[\textup{(2)}]
\(
\operatorname{rank}L_{\R}\le C_{p,\varepsilon}\log(2N).
\)

\item[\textup{(3)}]
\(
R_{\R}\succeq0
\)
and
\(
\operatorname{tr}(R_{\R})
\le
\varepsilon\sum_{i=1}^m |x_i|^p
+
C_{p,\varepsilon}\frac{m}{N}.
\)
\end{enumerate}
\end{lemma}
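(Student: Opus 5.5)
The plan is to read \(B_{\R}\) as a Gram matrix for the anchored kernel. Since \(\tfrac12(|x|^p+|y|^p-|x-y|^p)\) is exactly the form \(-\tfrac12\iint|u-v|^p\,d\mu\,d\nu\) evaluated at \(\mu=\delta_x-\delta_0\) and \(\nu=\delta_y-\delta_0\), for every \(\boldsymbol\lambda\in\R^m\) we have
\[
\boldsymbol\lambda^{\mathsf T}B_{\R}\boldsymbol\lambda=-\tfrac12\sum_{i,j=0}^m\lambda_i\lambda_j|x_i-x_j|^p,
\]
where \(x_0:=0\) and \(\lambda_0:=-\sum_{i\ge1}\lambda_i\). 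The augmented coefficient vector lies in \(\boldsymbol 1^\perp\). So any identity for centered quadratic forms, such as Proposition~\ref{prop:zero-anchored-exact-bridge-log}(i) applied to \(\{0,x_1,\dots,x_m\}\subseteq[-1,1]\), becomes a matrix identity for \(B_{\R}\). The proof then splits according to the position of \(p\).

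\emph{Case \(p\in(4k,4k+2)\), \(k\ge0\), \(p\ge1\).} We have \(\Gamma_0(0)=0\) by the scaling in Proposition~\ref{prop:zero-anchored-exact-bridge-log}(ii), and \(\boldsymbol w_0(0)=0\) since all coordinates \(x^r\) and \(\operatorname{sgn}(x)^r|x|^{p-r}\) vanish at \(0\) (for \(k=0\) there is no \(\boldsymbol w_0\)). Hence the bridge gives
\[
B_{\R}=c_{p,k}G_\Gamma+G_+-G_-\preceq c_{p,k}G_\Gamma+G_+,
\]
where \(G_\Gamma\), \(G_+\), \(G_-\) are the Gram matrices of \(\Gamma_0(x_i)\), \(M_{+,0}^{1/2}\boldsymbol w_0(x_i)\) and \(M_{-,0}^{1/2}\boldsymbol w_0(x_i)\), and \(\operatorname{rank}G_+\le4k\). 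Fix \(\delta\) with \(2^{p+1}c_{p,k}\delta^2\le\varepsilon\), and let \(P\) be the orthogonal projection onto the space \(\mathcal V_L\) of Lemma~\ref{lem:dyadic-shell-approx-log}, with \(Q=I-P\). Since \(\Gamma=P\Gamma+Q\Gamma\), the elementary inequality \(\mathrm{Gram}(a+b)\preceq2\,\mathrm{Gram}(a)+2\,\mathrm{Gram}(b)\) applies. We set
\[
L_{\R}:=2c_{p,k}\,\mathrm{Gram}(P\Gamma_0(x_i))+G_+,\qquad R_{\R}:=2c_{p,k}\,\mathrm{Gram}(Q\Gamma_0(x_i))\succeq0.
\]
Then \(\operatorname{rank}L_{\R}\le C_{p,k,\delta}L+4k\). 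By \eqref{eq:dyadic-shell-bound-log},
\[
\operatorname{tr}R_{\R}\le2c_{p,k}\sum_i\bigl(2^p\delta^2|x_i|^p+C_{p,k}2^{-Lp}\bigr).
\]
Taking \(L=\lceil\log_2N/p\rceil+1\) gives \(\operatorname{tr}R_{\R}\le\varepsilon\sum_i|x_i|^p+C\,m/N\) and \(\operatorname{rank}L_{\R}\le C_{p,\varepsilon}\log(2N)\).

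\emph{Case \(p=2d\) even.} The kernel \(\tfrac12\bigl(x^{2d}+y^{2d}-(x-y)^{2d}\bigr)\) is a polynomial in which every monomial has the form \(x^ay^b\) with \(a,b\ge1\). It is therefore a bilinear form in the vectors \((x,\dots,x^{2d-1})\), so \(B_{\R}\) has rank at most \(2d-1\). Writing its positive spectral part as \(L_{\R}\) and \(R_{\R}=0\), we get \(B_{\R}\preceq L_{\R}\) directly.

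\emph{Case \(p\in(4k+2,4k+4)\).} This is the main obstacle, because the bridge of Proposition~\ref{prop:zero-anchored-exact-bridge-log} is not available here. I would build an odd-order analogue of it. Write \(p=4k+2+\beta\) with \(\beta\in(0,2)\), and take the zero-anchored \((2k+1)\)-fold primitive of \(\delta_x\), as in Lemma~\ref{lem:zero-anchored-primitive-log} but with \(n_0=2k\).

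Since \(\partial_x^{2k+1}\partial_y^{2k+1}f(x-y)=-f^{(4k+2)}(x-y)\), the bulk--bulk term now carries the opposite sign. It becomes
\[
-c\,\Bigl\|\sum_i\lambda_i\Gamma_0(x_i)\Bigr\|^2_{\mathcal H_\beta}\le0.
\]
This is the parity phenomenon behind Lemma~\ref{lem odd d}. The bulk--boundary and boundary--boundary terms are handled exactly as before. They give a finite symmetric form \(\mathbf m^{\mathsf T}A\,\mathbf m\) in moment vectors of dimension \(O(k)\), built from \(x^r\) and \(\operatorname{sgn}(x)^r|x|^{p-r}\) with \(1\le r\le 2k+1\). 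It remains to check that the boundary--boundary terms still vanish (this needs \(r+s<p\)) and that the Beta-integral moment identities hold for \(\alpha>-1\).

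Granting this, \(B_{\R}\preceq G_+\), where \(G_+\) is the Gram matrix of \(M_+^{1/2}\boldsymbol w(x_i)\) and has rank \(O(k)\). So we take \(L_{\R}=G_+\) and \(R_{\R}=0\). As a fallback, if the odd-order bridge resists, I would use the weaker route through conditional negative definiteness. This would mean a uniform (set-independent) version of Lemma~\ref{lem cnd}, obtained by the Fourier representation \eqref{eq xp}: on mass-zero measures the cosine integral has the favorable sign, and the subtracted Taylor polynomial contributes only finite rank.
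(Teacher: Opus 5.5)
Your proof is correct, but it is organized differently from the paper's in two respects.

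First, the paper works with centered forms and then needs Lemma~\ref{lem:centered-domination-to-domination} to absorb the ambiguity $\boldsymbol{1}\boldsymbol{b}^{\mathsf T}+\boldsymbol{b}\boldsymbol{1}^{\mathsf T}$. You instead adjoin the anchor point $0$ with weight $\lambda_0=-\sum_i\lambda_i$. Since $\Gamma_0(0)=0$ and $\boldsymbol{w}_0(0)=\boldsymbol{0}$, Proposition~\ref{prop:zero-anchored-exact-bridge-log}(i) then gives the exact identity $B_{\R}=c_{p,k}G_\Gamma+G_+-G_-$ for all $\boldsymbol\lambda\in\R^m$. So in that case the paper's $\boldsymbol{b}$ is in fact $\boldsymbol{0}$, and your route is cleaner. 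After this, your treatment of $p\in(4k,4k+2)$ is the paper's dyadic-shell argument. The factor $2$ is unnecessary: $P\Gamma_0(x_i)$ and $Q\Gamma_0(x_j)$ are orthogonal, so the Gram matrix splits exactly. Your even case is the same as the paper's.

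Second, for $p\in(4k+2,4k+4)$ the paper builds nothing new. It applies Lemma~\ref{lem cnd} to the finite set $\{x_1,\dots,x_m\}$ and obtains $C_mB_{\R}C_m\preceq C_mPC_m$ with $P_{ij}=-\tfrac12\sum_{r\le d}C_{2r}(x_i-x_j)^{2r}$. It then lifts this with Lemma~\ref{lem:centered-domination-to-domination}. The coefficients depend on the set, but that is harmless because only $\operatorname{rank}P\le 2d+1$ enters. So the uniform, set-independent version of Lemma~\ref{lem cnd} mentioned in your fallback is not needed. With your anchoring trick you could even apply Lemma~\ref{lem cnd} to $\{0,x_1,\dots,x_m\}$ and skip the lifting lemma.

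Your odd-order bridge, which you leave as ``granting this'', does go through:
\begin{itemize}
\item For $x>0$ the $(2k+1)$-fold anchored primitive is $-\frac{(x-t)_+^{2k}}{(2k)!}+\cdots$, because $2k+1$ derivatives of $\frac{(x-t)_+^{2k}}{(2k)!}$ give $-\delta_x$. Hence $\int\psi_x=-\frac{x^{2k+1}}{(2k+1)!}$.
\item The mass-zero correction $+\frac{x^{2k+1}}{(2k+1)!}\delta_0$ then produces exactly the extra pair $M_{2k+1}S_{2k+1}$ you anticipated.
\item The cross terms are fine because $p-r>2k+1$ for $r\le 2k$.
\item The boundary--boundary terms vanish because $r+s\le 4k<p$.
\end{itemize}

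What your route buys is a unified picture. The anchored $\lfloor p/2\rfloor$-fold primitive handles every non-even $p$, and the sign $(-1)^{\lfloor p/2\rfloor}$ of the Hilbert term decides whether a dyadic approximation is needed at all. The cost is redoing the distributional bookkeeping, whereas the paper's treatment of this case is a few lines using lemmas already proved.
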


\begin{proof}[Proof of Lemma~\ref{lem:real-line-part-ple2}]
 If \(p=2h\) is an even integer, then
\[
\frac12\bigl(|x|^{2h}+|y|^{2h}-|x-y|^{2h}\bigr)
\]
is a polynomial in \(x\) and \(y\) of degree bounded in terms of \(p\). Hence
\(B_{\R}\) has rank at most \(C_p\). Taking
\(
L_{\R}:=B_{\R}\) and
\(
R_{\R}:=0
\)
gives the desired conclusion.
Therefore, it suffices to consider the case $p\in(2k,2k+2)$.	

First suppose
\(
p\in(4q+2,4q+4)
\)
for some integer \(q\ge0\). Set
\(
d:=\left\lfloor\frac p2\right\rfloor=2q+1.
\)
Let \(A_0:=\{x_1,\dots,x_m\}\subseteq\R\), where repetitions ignored. By
Lemma~\ref{lem cnd}, there are real constants
$C_2(A_0,p), \dots, C_{2d}(A_0,p)$ such that
\[
K(x,y):=\sum_{r=1}^d C_{2r}(A_0,p)(x-y)^{2r}-|x-y|^p
\]
is conditionally negative definite on \(A_0\). Define
\(
K_{ij}:=K(x_i,x_j),\) and
\(
P_{ij}:=-\frac12\sum_{r=1}^d C_{2r}(A_0,p)(x_i-x_j)^{2r}.
\)
Since \(C_m \mathbf{1}=0\), we have
\[
C_mB_{\R}C_m
=
\frac12 C_mKC_m+C_mPC_m.
\]
By Lemma~\ref{lem:cnd posit}, \(-C_mKC_m\succeq0\), and hence
\[
C_mB_{\R}C_m\preceq C_mPC_m.
\]
Lemma~\ref{lem:centered-domination-to-domination} therefore gives a vector
\(\boldsymbol{b}\in\R^m\) such that
\[
B_{\R}\preceq P+\boldsymbol{1}\boldsymbol{b}^{\mathsf T}+\boldsymbol{b}\boldsymbol{1}^{\mathsf T}.
\]
The matrix \(P\) has rank at most \(2d+1\), and consequently
\[
\operatorname{rank}\bigl(P+\boldsymbol{1}\boldsymbol{b}^{\mathsf T}
+\boldsymbol{b}\boldsymbol{1}^{\mathsf T}\bigr)\le C_p.
\]
Thus we may take
\(
L_{\R}:=P+\boldsymbol{1}\boldsymbol{b}^{\mathsf T}+\boldsymbol{b}\boldsymbol{1}^{\mathsf T}\)
and \(
R_{\R}:=0.
\)
This proves the lemma in this case.

It remains to consider the case \(p\in(4q,4q+2)\) for some integer \(q\ge0\), with \(p\ge1\).
Let
\(\Gamma_0,\boldsymbol w_0,M_{+,0},M_{-,0}\) be the objects supplied by
Proposition~\ref{prop:zero-anchored-exact-bridge-log}, and put
\[
W:=
\begin{bmatrix}
\boldsymbol w_0(x_1)&\boldsymbol w_0(x_2)&\cdots&\boldsymbol w_0(x_m)
\end{bmatrix}
\in\R^{4q\times m}.
\]
For every \(\boldsymbol\lambda\in\boldsymbol{1}^{\perp}\), Proposition~\ref{prop:zero-anchored-exact-bridge-log}~(i) gives
\[
\boldsymbol\lambda^{\mathsf T}B_{\R}\boldsymbol\lambda
=
c_{p,q}\left\|\sum_{i=1}^m\lambda_i\Gamma_0(x_i)\right\|_{\mathcal H_\beta}^2
+
\boldsymbol\lambda^{\mathsf T}W^{\mathsf T}(M_{+,0}-M_{-,0})W\boldsymbol\lambda.
\]
By Lemma~\ref{lem:centered-domination-to-domination}, in the equality case, there
exists \(\boldsymbol{b}\in\R^m\) such that
\[
B_{\R}
=
c_{p,q}\bigl(\langle\Gamma_0(x_i),\Gamma_0(x_j)\rangle_{\mathcal H_\beta}\bigr)_{i,j=1}^m
+
W^{\mathsf T}(M_{+,0}-M_{-,0})W
+
\boldsymbol{1}\boldsymbol{b}^{\mathsf T}+\boldsymbol{b}\boldsymbol{1}^{\mathsf T}.
\]

We approximate the Hilbert part by dyadic shells. By compactness of \( \Gamma_0([1/4,1/2]\cup[-1/2,-1/4]),\) there is a finite-dimensional subspace \(V_0\subseteq\mathcal H_\beta\) such that \[ \operatorname{dist}(\Gamma_0(u),V_0)^2\le 4^{-p}\varepsilon \] for every \(u\) with \(1/4\le |u|\le1/2\). Let \(L_0:=\left\lceil\frac1p\log_2N\right\rceil+3\) and \(V_N:=\sum_{\ell=0}^{L_0}T_{2^{-\ell}}V_0.\) Then \[ \dim V_N\le C_{p,\varepsilon}\log(2N). \]
\begin{claim}
    \(\operatorname{dist}(\Gamma_0(x),V_N)^2 \le \varepsilon |x|^p+C_{p,\varepsilon}N^{-1}\) for \(|x|\le\frac{1}{2}.\)
\end{claim}
\begin{poc}
    If \(2^{-\ell-2}<|x|\le 2^{-\ell-1}\) for some \(0\le \ell\le L_0\), then \(x=2^{-\ell}u\) for some \(u\) with \(1/4<|u|\le1/2\). By Proposition~\ref{prop:zero-anchored-exact-bridge-log}~(ii), \[ \operatorname{dist}(\Gamma_0(x),V_N)^2 \le 2^{-\ell p}\operatorname{dist}(\Gamma_0(u),V_0)^2 \le 4^{-p}\varepsilon\,2^{-\ell p} \le \varepsilon |x|^p. \] If \(|x|\le 2^{-L_0-2}\), then \(0\in V_N\), and By Proposition~\ref{prop:zero-anchored-exact-bridge-log}~(ii) also gives \[ \operatorname{dist}(\Gamma_0(x),V_N)^2 \le \|\Gamma_0(x)\|_{\mathcal H_\beta}^2 \le C_p |x|^p \le C_{p}2^{-L_0p} \le C_{p}N^{-1}. \] This proves the claim.
\end{poc}

Let \(P_N\) be the orthogonal projection onto \(V_N\). Define
\[
(L_{\R})_{ij}
:=
c_{p,q}\langle P_N\Gamma_0(x_i),P_N\Gamma_0(x_j)\rangle_{\mathcal H_\beta}
+
\bigl(W^{\mathsf T}(M_{+,0}-M_{-,0})W\bigr)_{ij}
+
b_i+b_j,
\]
and set
\(
R_{\R}:=B_{\R}-L_{\R}.
\)
Then
\[
(R_{\R})_{ij}
=
c_{p,q}
\langle (I-P_N)\Gamma_0(x_i),(I-P_N)\Gamma_0(x_j)\rangle_{\mathcal H_\beta},
\]
so \(R_{\R}\succeq0\). Moreover,
\[
\operatorname{rank}L_{\R}
\le
\dim V_N+C_p
\le
C_{p,\varepsilon}\log(2N),
\]
and
\[
\operatorname{tr}(R_{\R})
=
c_{p,q}\sum_{i=1}^m
\|(I-P_N)\Gamma_0(x_i)\|_{\mathcal H_\beta}^2
\le
\varepsilon\sum_{i=1}^m |x_i|^p
+
C_{p,\varepsilon}\frac{m}{N}.
\]
This completes the proof.
\end{proof}

\subsection{The toroidal correction}

We next treat the genuinely toroidal correction.

\begin{lemma}\label{lem:volterra}
Let \(\psi:[0,1]\to\R\) be absolutely continuous, let \(\psi(0)=0\), and assume
that \(\psi'\) has bounded variation on \([0,1]\). Then there exists a constant
\(A_\psi<\infty\) such that the following holds. For every \(0<\tau\le1\),
every
\(
u_1,\dots,u_r, v_1,\dots,v_s\in[0,\tau],
\)
and every integer \(N\ge1\), the matrix
\(
M_{ij}:=\psi((v_j-u_i)_+)
\)
admits a decomposition
\(
M=E+H
\)
such that
\(
\|E\|_{S_1}\le A_\psi\frac{\tau\sqrt{rs}}{N},
\)
and
\(
\operatorname{rank}H\le N.
\)
Moreover, after increasing \(A_\psi\) if necessary, one has
\(
|\psi(t)|\le A_\psi t\) for
\( 0\le t\le 1.
\)
\end{lemma}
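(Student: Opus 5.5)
The plan is to write the kernel as a superposition of rank-one step kernels and then truncate, handling the variation of \(\psi'\) by a dyadic or uniform partition. Since \(\psi(0)=0\) and \(\psi\) is absolutely continuous, for \(t\ge0\) we have \(\psi(t)=\int_0^t\psi'(\sigma)\,d\sigma\). Hence
\[
\psi((v_j-u_i)_+)=\int_0^\tau \psi'(w-u_i)\,\mathbf 1[u_i\le w\le v_j]\,dw ,
\]
with the convention \(\psi'=0\) on negative arguments. The factor \(\mathbf 1[u_i\le w]\mathbf 1[w\le v_j]\) is a product of a row function and a column function. However, \(\psi'(w-u_i)\) couples \(w\) with \(u_i\), so we need a second expansion. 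Since \(\psi'\) has bounded variation, we write \(\psi'(t)=\psi'(0^+)+\int_{(0,t]}d\mu(\sigma)\) with \(\mu\) a finite signed measure, \(|\mu|\le V:=\operatorname{Var}\psi'\). This gives
\[
\psi(t)=\psi'(0^+)\,t+\int_{(0,1]}(t-\sigma)_+\,d\mu(\sigma),
\]
so \(M\) is a combination of the matrix \(\bigl((v_j-u_i)_+\bigr)\) and a \(\mu\)-average of the shifted matrices \(\bigl((v_j-u_i-\sigma)_+\bigr)\). It also gives \(|\psi(t)|\le(|\psi'(0^+)|+V)t\), which is the ``moreover'' clause.

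It therefore suffices to treat the basic kernel \(K_\sigma(u,v)=(v-u-\sigma)_+\) uniformly in \(\sigma\ge0\). The target is a rank-\(N\) approximation with trace-norm error \(O(\tau\sqrt{rs}/N)\) uniform in \(\sigma\). We cannot simply sum separate rank-\(N\) approximations over \(\sigma\), because that destroys the rank bound. So we choose the approximating space independently of \(\sigma\).

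Partition \([0,\tau]\) into \(N\) intervals \(I_1,\dots,I_N\) of length \(\tau/N\). We split the entries \((i,j)\) according to which cells \(u_i\) and \(v_j-\sigma\) lie in. On blocks where \(u_i\in I_a\) and \(v_j-\sigma\in I_b\) with \(a<b\), we have \((v_j-\sigma-u_i)_+=v_j-\sigma-u_i\). This is affine and hence rank \(\le2\) per block, but there are many blocks. To keep the rank low we use that on the whole off-diagonal region \(\{a<b\}\) the kernel equals the global affine function \(v-\sigma-u\). Thus
\[
K_\sigma=(v-\sigma-u)\mathbf 1[a<b]+K_\sigma\,\mathbf 1[a\ge b].
\]
The second term lives on a near-diagonal band where the entries have size \(\le\tau/N\). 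The first term is an affine function times a staircase indicator \(\mathbf 1[a(i)<b(j)]\), which is not low rank. I would handle this by a Volterra-type hierarchical (dyadic) splitting of the staircase: at each dyadic level the off-diagonal pieces are exactly affine (rank \(\le 2\)), and level \(\ell\) contributes \(2^\ell\) blocks. Truncating at \(2^\ell\approx N\) gives rank \(O(N)\). After rescaling \(N\), the error is the sum of diagonal blocks \(D_c\) of entry size \(\le\tau/N\). Each block has trace norm \(\le\sqrt{\#\mathrm{rows}\cdot\#\mathrm{cols}}\cdot\tau/N\), and by Cauchy--Schwarz \(\sum_c\sqrt{r_cs_c}\le\sqrt{rs}\).

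The point that needs care is that the dyadic cells depend on \(\sigma\) through \(v_j-\sigma\). For the \(\mu\)-average we therefore do not shift the cells. Instead we keep the cells fixed in \(u\) and \(v\) and note that \((v-u-\sigma)_+\) is again affine in \((u,v)\) on each off-diagonal dyadic block, except for blocks straddling the line \(v-u=\sigma\). The fix I would try first is to build the partition once from the cells of the \(u\) and \(v\) coordinates and to use the \(\sigma\)-integral only on diagonal blocks. On an off-diagonal block \(I\times J\) (with \(I<J\)), \(\int (v-u-\sigma)_+\,d\mu(\sigma)=\psi(v-u)-\psi'(0^+)(v-u)\) restricted to the block is a \(\mu\)-mixture of functions \((s-\sigma)_+\) with \(s=v-u\). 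Its Taylor expansion at scale of the block to first order has error controlled by \(|\mu|\) of the corresponding window times the block width. The summable total variation should make these errors add up to \(O(V\tau/N)\) per unit of \(\sqrt{rs}\).

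This bookkeeping is the main obstacle: showing that the nonaffine parts on off-diagonal blocks, charged to the \(\mu\)-mass they straddle, sum in trace norm to \(A_\psi\tau\sqrt{rs}/N\) while the total rank stays \(\le N\).
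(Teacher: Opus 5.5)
Your reduction \(\psi(t)=\psi'(0^+)t+\int(t-\sigma)_+\,d\mu(\sigma)\) and the ``moreover'' clause are fine. However, neither of the two steps that carry the lemma is established.

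\textbf{First gap: the diagonal blocks.} You claim that each diagonal block \(D_c\) satisfies \(\|D_c\|_{S_1}\le\sqrt{r_cs_c}\cdot\tau/N\). This does not follow from the entry bound. An \(r_c\times s_c\) matrix with entries of size \(\le\epsilon\) only satisfies \(\|D\|_{S_1}\le\sqrt{\min(r_c,s_c)}\,\|D\|_F\le\sqrt{\min(r_c,s_c)}\sqrt{r_cs_c}\,\epsilon\). A scaled \(n\times n\) Hadamard matrix has trace norm \(n^{3/2}\epsilon\), so the extra factor is real. If all points lie in one cell, this bound is the whole problem.

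For the kernel \((v-u)_+\) the bound you want is true, but it needs the structure of the kernel. Run your dyadic splitting of \([0,\tau]\) to every depth. At depth \(k\), a sibling block \(I\times J\) (with \(I\) left of \(J\)) carries \(v-u=(v-c)-(u-c)\), where \(c\) is the parent's midpoint. So that block has trace norm \(\le\tau2^{-k}\sqrt{r_Is_J}\). Cauchy--Schwarz over the disjoint pairs, then a geometric sum in \(k\), give \(\|((v_j-u_i)_+)\|_{S_1}\le 2h\sqrt{rs}\) for points in any interval of length \(h\). Truncating at depth \(\log_2 N\) then settles the case \(\psi(t)=t\). Note also that the staircase \(\mathbf 1[a(i)<b(j)]\) already has rank \(\le N\), since its rows depend only on \(a(i)\). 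So the hierarchy is needed for the error, not for the rank.

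\textbf{Second gap: general \(\psi\).} The step you call bookkeeping is missing, and the hierarchical partition is the wrong one for it. On a coarse off-diagonal block of width \(\tau2^{-\ell}\), the kernel \((v-u-\sigma)_+\) is non-affine for every \(\sigma\) in a window of width \(\asymp\tau2^{-\ell}\). The first-order Taylor remainder there is therefore of size \(\tau2^{-\ell}|\mu|(\text{window})\), not \(O(\tau/N)\).

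Along your lines, you can instead use a flat partition into \(N\) cells of width \(h=\tau/N\). Set \(\nu:=\psi'(0^+)\delta_0+\mu\) and \(d=b-a\). On block \((a,b)\) one has
\(M_{ij}=\alpha_d(v_j-u_i)-\beta_d+\int_{W_d}(v_j-u_i-\sigma)_+\,d\nu(\sigma)\),
where \(W_d=((d-1)h,(d+1)h)\), \(\alpha_d=\nu([0,(d-1)h])\) and \(\beta_d=\int_{[0,(d-1)h]}\sigma\,d\nu\).
\begin{itemize}
\item The affine part has rank \(\le 3N\), because each of its three pieces has rows or columns depending only on a cell index.
\item Split the remainder along the block diagonals \(b-a=d\). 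The triangle inequality in \(\sigma\), the basic bound above, and Cauchy--Schwarz bound it by \(O(h\sqrt{rs}\,\|\nu\|)\).
\end{itemize}

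\textbf{How the paper avoids this.} It extends \(t\mapsto\psi((\tau t)_+)\) from \([-1,1]\) to a \(4\)-periodic function whose second distributional derivative has total variation \(O(\tau)\). Its Fourier coefficients are then \(O(\tau/\ell^2)\). Each mode is the rank-one matrix \(\bigl(e^{-\pi\mathrm{i}\ell u_i/(2\tau)}e^{\pi\mathrm{i}\ell v_j/(2\tau)}\bigr)_{ij}\), whose trace norm is exactly \(\sqrt{rs}\). Keeping \(|\ell|\le(N-1)/2\) gives rank \(\le N\) and tail \(O(\tau\sqrt{rs}/N)\), for every admissible \(\psi\) at once.
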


\begin{proof}[Proof of Lemma~\ref{lem:volterra}]
Choose a representative of \(\psi'\) of bounded variation, and put
\(
L_\psi:=\|\psi'\|_{L^\infty(0,1)}\) and
\(
V_\psi:=\operatorname{Var}_{[0,1]}(\psi').
\)
Since \(\psi(0)=0\), the Lipschitz bound follows from absolute continuity once
\(A_\psi\ge L_\psi\).

Fix \(0<\tau\le1\). Define a function \(g_\tau\) on \([-2,2]\) by
\[
g_\tau(t)=
\begin{cases}
0,& -2\le t\le0,\\
\psi(\tau t),&0\le t\le1,\\
(2-t)\psi(\tau),&1\le t\le2.
\end{cases}
\]
Extend \(g_\tau\) to a continuous \(4\)-periodic function on \(\R\). On
\([-1,1]\), we have
\(
g_\tau(t)=\psi((\tau t)_+).
\)
The distributional second derivative of this periodic function is a finite
signed periodic measure, and
\[
\|g_\tau''\|_{\mathrm{TV}(\R/4\Z)}
\le
C\tau(L_\psi+V_\psi).
\]
Also,
\(
\|g_\tau\|_{L^\infty([-2,2])}\le L_\psi\tau.
\)
Let
\(
g_\tau(t)=\sum_{\ell\in\Z}\widehat g_\tau(\ell)e^{\pi \ii\ell t/2}
\)
be the Fourier series on the period-four circle. For \(\ell\ne0\), two
integrations by parts in the sense of distributions give
\[
|\widehat g_\tau(\ell)|
\le
\frac{C\tau(L_\psi+V_\psi)}{\ell^2}.
\]
Moreover, \(|\widehat g_\tau(0)|\le L_\psi\tau\). Hence, for all \(K\ge0\),
\[
\sum_{|\ell|>K}|\widehat g_\tau(\ell)|
\le
\frac{B_\psi\tau}{K+1},
\]
where \(B_\psi\) depends only on \(\psi\).

For \(u,v\in[0,\tau]\), the number \((v-u)/\tau\) belongs to \([-1,1]\). Hence
\[
\psi((v-u)_+)
=
g_\tau\left(\frac{v-u}{\tau}\right)
=
\sum_{\ell\in\Z}
\widehat g_\tau(\ell)
e^{-\pi \ii\ell u/(2\tau)}
e^{\pi \ii\ell v/(2\tau)}.
\]
Let
\(
K:=\left\lfloor\frac{N-1}{2}\right\rfloor,
\)
and let \(H\) be the real matrix obtained by keeping only the modes
\(|\ell|\le K\). The term \(\ell=0\) has rank at most \(1\), and the modes
\(\ell\) and \(-\ell\) together have real rank at most \(2\). Thus
\[
\operatorname{rank}H\le2K+1\le N.
\]
The remainder is a sum of separated rank-one matrices. Since each separated
vector has Euclidean norm \(\sqrt r\) or \(\sqrt s\), we get
\[
\|E\|_{S_1}
\le
\sum_{|\ell|>K}|\widehat g_\tau(\ell)|\sqrt{rs}
\le
A_\psi\frac{\tau\sqrt{rs}}{N}.
\]
This proves the lemma.
\end{proof}

For \( 0\le t\le 1,\) define
\[
\rho_p(t):=2^{-p}\bigl((1+t)^p-(1-t)^p\bigr).
\]

\begin{lemma}\label{lem:rectangular-corner-rank}
Let \(m\ge1\), let \(r+s\le m\), and let
\(
u_1,\dots,u_r,v_1,\dots,v_s\in[0,1].
\)
Define
\(
M_{ij}:=\rho_p\bigl((v_j-u_i)_+\bigr).
\)
For every \(\eta\in(0,1]\), there is a decomposition
\(
M=E+H
\)
such that
\(
\|E\|_{S_1}
\le
\eta\left(r+\sum_{j=1}^s v_j^p\right),
\)
and
\(
\operatorname{rank}H
\le
C_{p,\eta}\bigl(m^{\vartheta_p}+\log(2m)\bigr).
\)
\end{lemma}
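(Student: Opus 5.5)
Our plan is to reduce Lemma~\ref{lem:rectangular-corner-rank} to Lemma~\ref{lem:volterra} by splitting the rows and columns into dyadic scales. The point is that $\rho_p$ behaves like $\psi(t)=\rho_p(t)$ near $0$: it is smooth on $[0,1]$ with $\rho_p(0)=0$ and $\rho_p'$ of bounded variation. Hence Lemma~\ref{lem:volterra} applies with $\psi=\rho_p$, and it gives $|\rho_p(t)|\le A t$.

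\textbf{Step 1: geometric decomposition of the entries.} We sort the $u_i$ and $v_j$ into dyadic scale classes. For $\ell\ge0$, let $I_\ell:=\{j: v_j\in(2^{-\ell-1},2^{-\ell}]\}$, and put all $v_j\le 2^{-L}$ into a tail class. Since $(v_j-u_i)_+\ne0$ forces $u_i<v_j$, the nonzero entries of column $j\in I_\ell$ come only from rows with $u_i\le 2^{-\ell}$. For $u_i\le 2^{-\ell-2}$ we have $v_j-u_i\ge 2^{-\ell-2}$, and on this region the relevant arguments lie in an interval bounded away from $0$. The remaining rows have $u_i\in(2^{-\ell-2},2^{-\ell}]$, a "near-diagonal" block at scale $\tau=2^{-\ell}$. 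We treat the near-diagonal blocks with Lemma~\ref{lem:volterra} at $\tau=2^{-\ell}$, using $N_\ell$ modes. The trace error per block is then $A\,2^{-\ell}\sqrt{r_\ell s_\ell}/N_\ell$. We bound $\sqrt{r_\ell s_\ell}\le (r_\ell+s_\ell)/2$ and note $2^{-\ell}\lesssim v_j$ for $j\in I_\ell$. The far part ($u_i\le 2^{-\ell-2}$, $v_j\in I_\ell$) we also handle by Lemma~\ref{lem:volterra}, now on $[0,2^{-\ell}]$. Alternatively, we can merge all far parts into one application at $\tau=1$ after subtracting the tail.

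\textbf{Step 2: balancing error and rank.} The error must be at most $\eta(r+\sum_j v_j^p)$. For $1\le p\le2$ the budget $\eta r$ from the rows suffices. Each row participates in $O(1)$ near-diagonal scales, so taking $N_\ell=C_\eta$ constant and about $\log(2m)$ active scales gives total error $\lesssim \eta\sum_\ell 2^{-\ell}(r_\ell+s_\ell)\le\eta r+\eta s$. The $s$-term must be converted: we use $2^{-\ell}\le 2v_j$ and then need $v_j\lesssim v_j^p+\ldots$, which fails for $p>1$. This is where $\vartheta_p$ enters. We bound $2^{-\ell}s_\ell\le \epsilon s_\ell+\epsilon^{-?}\sum v_j^p$ via Young's inequality, or we charge the error to the rows instead, using $\sqrt{r_\ell s_\ell}\le \epsilon s_\ell+r_\ell/\epsilon$. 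Scales with $2^{-\ell}<1/m$ are truncated: entries with $v_j\le 1/m$ are at most $A/m$, so that whole tail block has trace norm at most $\sqrt{rs}\,A/m\lesssim 1$. We absorb this into $\eta r$ when $r\gtrsim 1/\eta$, and otherwise bound its rank by $r\le C_\eta$. Hence only $O(\log m)$ scales contribute rank.

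\textbf{Step 3: the obstacle.} The hardest step is the regime $p>2$. There, columns with $v_j$ of order $2^{-\ell}$ have weight only $v_j^p\approx 2^{-\ell p}$, while the Volterra error scales like $2^{-\ell}$. The plan is to choose $N_\ell\approx\min\{\sqrt{r_\ell s_\ell},\,C_\eta 2^{\ell(p-1)}\ldots\}$ and optimize: use exact (full-rank) blocks, i.e.\ rank $\min(r_\ell,s_\ell)$, when the block is small, and Volterra truncation otherwise. Summing the trade-off $\min(s_\ell,\ 2^{-\ell}\sqrt{r_\ell s_\ell}/(\eta\,2^{-\ell p}s_\ell))$ under $\sum(r_\ell+s_\ell)\le m$ and optimizing via Hölder should produce the exponent $m^{(p-2)/(3p-2)}=m^{\vartheta_p}$. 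I expect this counting/optimization to be the delicate part and would first verify it on the extremal configuration where all mass sits at a single scale.
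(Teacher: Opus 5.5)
Your outline has the right tools: dyadic column scales, Lemma~\ref{lem:volterra}, and $|\rho_p(t)|\le At$ for a bottom tail. But it does not contain the argument that produces $m^{\vartheta_p}$, and several of the estimates you write down would fail.

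Start with the error bookkeeping. The column count $s$ is not in the budget $\eta\bigl(r+\sum_j v_j^p\bigr)$. So bounding $\sqrt{r_\ell s_\ell}\le (r_\ell+s_\ell)/2$, or charging $\epsilon s_\ell$ via Young's inequality, cannot close. What works is the scale-dependent split
$\tau_\ell\sqrt{r_\ell s_\ell}\le \tfrac12\bigl(\tau_\ell^{2-p}r_\ell+\tau_\ell^{p}s_\ell\bigr)$ (your second option with $\epsilon=\tau_\ell^{p-1}$), together with $\tau_\ell^p s_\ell\le 2^p\sum_{j\in C_\ell}v_j^p$. A row $i$ then pays $\sum\tau_\ell^{2-p}$ over every block containing it.
\begin{itemize}
\item For $p<2$ this is $O_p(1)$, even if the block at scale $\ell$ contains all rows with $u_i<\tau_\ell$. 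So a constant $N_\ell$ works and no near/far split is needed.
\item For $p\ge 2$ the charge is at least the number of blocks containing the row. This is exactly where treating the far parts ``also by Lemma~\ref{lem:volterra}'' breaks: a row with small $u_i$ lies in $\sim\log m$ far blocks.
\item The paper's proof handles $p=2$ by using $\rho_2(t)=t$: the far block $(v_j-u_i)$ then has rank at most $2$ exactly, and only the disjoint near bands go to Volterra.
\item For $p>2$ it spreads the row budget with weights $\alpha_\ell\propto\tau_\ell^{-(p-2)/3}$, $\sum_\ell\alpha_\ell=1$, and takes $N_\ell\sim\eta^{-1}\tau_\ell^{1-p/2}\alpha_\ell^{-1/2}$. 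Then $\sum_\ell N_\ell\le C_{p,\eta}\bigl(\log(2m)+S^{3/2}\bigr)$ with $S=\sum_\ell\tau_\ell^{-(p-2)/3}\le C_p\tau_0^{-(p-2)/3}$.
\end{itemize}
Your near/far split could avoid the weights if the far blocks, where $v_j-u_i\ge\tau_\ell/4$, were handled by a truncated Taylor expansion of $\rho_p$ on scales $\tau_\ell\le 1/2$. Its row cost decays like a positive power of $\tau_\ell$. Volterra alone does not give this.

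Second, the exponent comes from a bottom cutoff, not from a H\"older optimization over $\sum_\ell(r_\ell+s_\ell)\le m$. Put $\tau_0=m^{-2/(3p-2)}$ and $M_0=m^{\vartheta_p}$, so that $\tau_0^{1-p/2}=M_0=\tau_0\sqrt{m/M_0}$. On the block $\{u_i<\tau_0\}\times\{v_j\le\tau_0\}$ there are two cases:
\begin{itemize}
\item if $r_0\le M_0$, keep the block exactly;
\item otherwise apply Lemma~\ref{lem:volterra} with $N_0\sim M_0/\eta$, and charge the error to rows using only $s_0\le m$: $\tau_0\sqrt{r_0s_0}\le\tau_0 r_0\sqrt{m/M_0}=M_0r_0$.
\end{itemize}
Your per-scale trade-off $\min\bigl(s_\ell,\,2^{-\ell}\sqrt{r_\ell s_\ell}/(\eta 2^{-\ell p}s_\ell)\bigr)$ charges the error only to the column mass. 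With $r_\ell=s_\ell=m/2$ at scale $2^{-\ell}\approx m^{-1/(p-1)}$, it forces rank of order $m$, whereas charging the rows needs only a constant $N_\ell\sim1/\eta$ there. So the row budget is essential, and no optimization of your formula yields $m^{\vartheta_p}$.

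Finally, your tail estimate is wrong. Entries bounded by $A/m$ give only $\|E\|_{S_1}\le\sum_{i,j}|E_{ij}|\le Ars/m\le Ar$; your $\sqrt{rs}\,A/m$ is an operator-norm bound. This is not at most $\eta r$. The cutoff must be $\tau_*\sim\eta/(Am)$, giving $A\tau_* rs\le \eta r/16$. Also, $\vartheta_p=0$ for $p\le 2$, so the exponent $\vartheta_p$ plays no role in that case.
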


\begin{proof}[Proof of Lemma~\ref{lem:rectangular-corner-rank}]
The function \(\rho_p\) satisfies the assumptions of Lemma~\ref{lem:volterra}.
Indeed, \(\rho_p(0)=0\), \(\rho_p\) is absolutely continuous, and
\[
\rho_p'(t)
=
p2^{-p}\bigl((1+t)^{p-1}+(1-t)^{p-1}\bigr),
\]
with the obvious interpretation when \(p=1\). Hence \(\rho_p'\) has bounded
variation on \([0,1]\). Let \(A_p:=A_{\rho_p}\) be the constant from
Lemma~\ref{lem:volterra}. We then split the proof into the three regimes \(1\le p<2\), \(p=2\), and \(p>2\).

First assume \(1\le p<2\). Put
\[
\tau_*:=\min\left\{1,\frac{\eta}{16A_pm}\right\}.
\]
All columns with \(v_j\le\tau_*\) are put into the error. Since
\(
|\rho_p(t)|\le A_pt,
\)
we get
\[
\|E_{\mathrm{bot}}\|_{S_1}
\le
A_p\tau_*rs
\le
\frac{\eta r}{16}.
\]
For the remaining columns, use dyadic column scales \(C_\ell\subseteq(\tau_*,1]\). For each scale, denote its upper endpoint by \(\tau_\ell\), so that \( \frac{\tau_\ell}{2}<v_j\le \tau_\ell\) for every \(j\in C_\ell, \) except possibly for the first or last truncated scale, which only changes the absolute constants. Set \(R_\ell:=\{i:u_i<\tau_\ell\},\) \(r_\ell:=|R_\ell|,\) \(s_\ell:=|C_\ell|,\) and \( B_\ell:=\sum_{j\in C_\ell}v_j^p.\) The block \(R_\ell\times C_\ell\) contains all nonzero entries from this scale. Moreover, \[ s_\ell\tau_\ell^p \le 2^p\sum_{j\in C_\ell}v_j^p = 2^pB_\ell. \]
Apply Lemma~\ref{lem:volterra} to this block with
\(
N_0:=\left\lceil C_pA_p\eta^{-1}\right\rceil.
\)
Then
\[
\|E_\ell\|_{S_1}
\le
A_p\frac{\tau_\ell\sqrt{r_\ell s_\ell}}{N_0}.
\]
Now
\[
\tau_\ell\sqrt{r_\ell s_\ell}
=
\sqrt{\tau_\ell^{2-p}r_\ell}\sqrt{s_\ell\tau_\ell^p}
\le
C_p\bigl(\tau_\ell^{2-p}r_\ell+B_\ell\bigr).
\]
Since \(2-p>0\), the dyadic sum is summable:
\[
\sum_\ell \tau_\ell^{2-p}r_\ell
=
\sum_{i=1}^r\sum_{\ell:u_i<\tau_\ell}\tau_\ell^{2-p}
\le
C_pr.
\]
Also,
\(
\sum_\ell B_\ell\le\sum_{j=1}^s v_j^p.
\)
Thus, after choosing the constant in \(N_0\) sufficiently large,
\[
\sum_\ell\|E_\ell\|_{S_1}
\le
\frac{\eta}{2}\left(r+\sum_{j=1}^s v_j^p\right).
\]
Together with the bottom scale, this gives the required error estimate. The rank
is at most a constant depending on \(p,\eta\) times the number of scales. Since
\(\tau_*^{-1}\le C_{p,\eta}m\), we get
\[
\operatorname{rank}H\le C_{p,\eta}\log(2m).
\]

Next assume \(p=2\). Then \(\rho_2(t)=t\). Put
\(
\tau_*:=\min\left\{1,\frac{\eta}{16m}\right\}.
\)
The columns with \(v_j\le\tau_*\) are put into the error, giving
\[
\|E_{\mathrm{bot}}\|_{S_1}
\le
\tau_*rs
\le
\frac{\eta r}{16}.
\]
For each non-bottom dyadic column scale \(C_\ell\), denote its upper and lower endpoints by \(\tau_\ell\) and \(\underline{\tau}_\ell\), and put \( s_\ell:=|C_\ell|\) and \(B_\ell:=\sum_{j\in C_\ell}v_j^2.\) Split the relevant rows as
\(
R_\ell^-:=\{i:u_i\le\underline{\tau}_\ell\}\)
and \(
R_\ell^+:=\{i:\underline{\tau}_\ell<u_i<\tau_\ell\}.
\)
Rows with \(u_i\ge\tau_\ell\) give zero entries on \(C_\ell\). If
\(i\in R_\ell^-\) and \(j\in C_\ell\), then \(u_i<v_j\), and hence
\(
(v_j-u_i)_+=v_j-u_i.
\)
Thus the block \(R_\ell^-\times C_\ell\) has rank at most \(2\), and is put
entirely into the low-rank part.

On \(R_\ell^+\times C_\ell\), apply Lemma~\ref{lem:volterra} to \(\psi(t)=t\) with
\(
N_0:=\left\lceil C\eta^{-1}\right\rceil.
\)
Writing \(r_\ell^+:=|R_\ell^+|\), we obtain
\[
\|E_\ell\|_{S_1}
\le
C\frac{\tau_\ell\sqrt{r_\ell^+s_\ell}}{N_0}.
\]
Since
\[
\tau_\ell\sqrt{r_\ell^+s_\ell}
\le
\frac12r_\ell^+ + \frac12s_\ell\tau_\ell^2
\le
\frac12r_\ell^+ + C B_\ell,
\]
and since the row bands \(R_\ell^+\) are disjoint, we have
\(
\sum_\ell r_\ell^+\le r.
\)
Also \(\sum_\ell B_\ell\le\sum_jv_j^2\). Hence the total error is bounded by
\(
\eta\left(r+\sum_jv_j^2\right),
\)
after increasing the constant in \(N_0\). The rank contribution per scale is at
most \(2+N_0\), so
\(
\operatorname{rank}H\le C_\eta\log(2m).
\)

Finally assume \(p>2\). Put
\(
\theta:=\vartheta_p=\frac{p-2}{3p-2},\)
\(
M_0:=m^\theta\) and
\(
\tau_0:=m^{-2/(3p-2)}.
\)
Then
\(
\tau_0^{1-p/2}=M_0\) and
\(
\tau_0\sqrt{\frac{m}{M_0}}=M_0.
\)
Let
\[
C_0:=\{j:0\le v_j\le\tau_0\}
\]
be the bottom scale. For the remaining columns use dyadic scales
\[
C_\ell:=\{j:2^{\ell-1}\tau_0<v_j\le2^\ell\tau_0\},
\]
for \(\ell\ge1\), with the last scale truncated at \(1\). The number of non-bottom scales is at
most \(C_p\log(2m)\).

First consider the bottom scale. Put
\(
R_0:=\{i:u_i<\tau_0\},\)
\(
r_0:=|R_0|\)
and
\(
s_0:=|C_0|.
\)
If \(r_0\le M_0\), put the whole bottom block into the low-rank part, and its rank is
at most \(M_0\). If \(r_0>M_0\), apply Lemma~\ref{lem:volterra} with
\(
N_0:=\left\lceil C_pA_p\eta^{-1}M_0\right\rceil.
\)
Since \(s_0\le m\) and \(r_0>M_0\),
\[
\tau_0\sqrt{r_0s_0}
\le
\tau_0r_0\sqrt{\frac{m}{r_0}}
\le
\tau_0r_0\sqrt{\frac{m}{M_0}}
=
M_0r_0.
\]
Therefore the bottom error is at most \(\eta r_0/4\), after choosing the constant
in \(N_0\) large enough, and the bottom rank is \(O_{p,\eta}(M_0)\).

Now consider non-bottom scales \(\ell\ge1\). Define
\(
R_\ell:=\{i:u_i<\tau_\ell\},\)
\(
r_\ell:=|R_\ell|,\)
\(
s_\ell:=|C_\ell|\)
and
\(
B_\ell:=\sum_{j\in C_\ell}v_j^p.
\)
As before,
\(
s_\ell\tau_\ell^p\le C_pB_\ell.
\)
Set
\(
\gamma:=\frac{p-2}{3},\)
\(
S:=\sum_{\ell\ge1}\tau_\ell^{-\gamma}\)
and
\(
\alpha_\ell:=\frac{\tau_\ell^{-\gamma}}{S}.
\)
If there are no non-bottom scales, there is nothing to prove. Otherwise
\(\sum_\ell\alpha_\ell=1\). Apply Lemma~\ref{lem:volterra} on the \(\ell\)-th
block with
\(
N_\ell:=
\left\lceil
C_pA_p\eta^{-1}
\frac{\tau_\ell^{1-p/2}}{\sqrt{\alpha_\ell}}
\right\rceil.
\)
Then
\[
\|E_\ell\|_{S_1}
\le
A_p\frac{\tau_\ell\sqrt{r_\ell s_\ell}}{N_\ell}
\le
C_p\eta\sqrt{\alpha_\ell r_\ell}\sqrt{s_\ell\tau_\ell^p}.
\]
Using \(2ab\le a^2+b^2\), we obtain
\[
\|E_\ell\|_{S_1}
\le
\frac{\eta}{4}\alpha_\ell r_\ell
+
C_p\eta B_\ell.
\]
Summing in \(\ell\), and using
\(
\sum_\ell \alpha_\ell r_\ell\le r\) and
\(
\sum_\ell B_\ell\le\sum_jv_j^p,
\)
the total non-bottom error is bounded by
\(
\frac{\eta}{2}\left(r+\sum_jv_j^p\right),
\)
after adjusting constants.

It remains to estimate the rank. We have
\[
\sum_\ell N_\ell
\le
C_p\log(2m)
+
C_{p,\eta}
\sum_\ell
\frac{\tau_\ell^{1-p/2}}{\sqrt{\alpha_\ell}}.
\]
Since
\(
\alpha_\ell^{-1/2}=S^{1/2}\tau_\ell^{\gamma/2}
\)
and
\(
1-\frac p2+\frac{\gamma}{2}=-\gamma,
\)
we get
\[
\sum_\ell
\frac{\tau_\ell^{1-p/2}}{\sqrt{\alpha_\ell}}
=
S^{1/2}\sum_\ell\tau_\ell^{-\gamma}
=
S^{3/2}.
\]
The dyadic sum is dominated by its first term:
\(
S\le C_p\tau_0^{-\gamma}.
\)
Therefore
\[
S^{3/2}
\le
C_p\tau_0^{-3\gamma/2}
=
C_p\tau_0^{-(p-2)/2}
=
C_pm^{(p-2)/(3p-2)}
=
C_pM_0.
\]
Thus the non-bottom rank is at most
\(
C_{p,\eta}\bigl(M_0+\log(2m)\bigr).
\)
Including the bottom scale gives the same bound. This completes the proof.
\end{proof}

\begin{prop}\label{prop:strong-torus-corner}
Let \(X=(z_1,\dots,z_m)\subseteq [-1/2,1/2]\), and define
\(
W_p(x,y):=|x-y|^p-\rho(x-y)^p.
\)
Let
\(
W_p(X):=\bigl(W_p(z_\alpha,z_\beta)\bigr)_{\alpha,\beta=1}^m.
\)
For every \(\varepsilon\in(0,1]\), there is a decomposition
\(
W_p(X)=F+G
\)
into real symmetric matrices such that
\(
\operatorname{tr}_+(F)
\le
\varepsilon\sum_{\ell=1}^m |z_\ell|^p
\)
and
\(
\operatorname{rank}G
\le
C_{p,\varepsilon}\bigl(m^{\vartheta_p}+\log(2m)\bigr).
\)
\end{prop}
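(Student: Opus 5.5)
The plan is to reduce Proposition~\ref{prop:strong-torus-corner} to Lemma~\ref{lem:rectangular-corner-rank}, by writing the wrap-around kernel explicitly as a bipartite corner kernel. For $x,y\in[-1/2,1/2]$ put $d=x-y\in[-1,1]$.

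\emph{Identifying the kernel.} If $|d|\le 1/2$, then $\rho(d)=|d|$ and $W_p(x,y)=0$. If $|d|>1/2$, then $\rho(d)=1-|d|$. Setting $t:=2|d|-1\in(0,1]$, so that $|d|=(1+t)/2$ and $1-|d|=(1-t)/2$, gives
\[
W_p(x,y)=2^{-p}\bigl((1+t)^p-(1-t)^p\bigr)=\rho_p\bigl((2|x-y|-1)_+\bigr).
\]
In particular $W_p(x,y)\neq0$ forces $x$ and $y$ to have opposite signs with $|x|+|y|>1/2$. Points with $z_\ell=0$ contribute zero rows and columns and can be ignored.

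\emph{Bipartite structure.} Order the points as the negative ones $y_i<0$ followed by the positive ones $x_j>0$. Then $W_p(X)=\begin{bmatrix}0&M\\ M^{\mathsf T}&0\end{bmatrix}$ with $M_{ij}=W_p(y_i,x_j)$. With $u_i:=1-2|y_i|\in[0,1]$ and $v_j:=2x_j\in[0,1]$ we get $2(x_j-y_i)-1=v_j-u_i$, hence $M_{ij}=\rho_p((v_j-u_i)_+)$. Symmetrically, with $u'_j:=1-2x_j$ and $v'_i:=2|y_i|$ we also have $M_{ij}=\rho_p((v'_i-u'_j)_+)$, which realises $M^{\mathsf T}$ in the form of Lemma~\ref{lem:rectangular-corner-rank} with the roles of rows and columns swapped.

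Suppose $M=E+H$ with $\operatorname{rank}H=h$. Set $F:=\begin{bmatrix}0&E\\ E^{\mathsf T}&0\end{bmatrix}$ and $G:=\begin{bmatrix}0&H\\ H^{\mathsf T}&0\end{bmatrix}$. Then $W_p(X)=F+G$ with $\operatorname{rank}G\le 2h$. The eigenvalues of $F$ are $\pm$ the singular values of $E$, so $\operatorname{tr}_+(F)=\|E\|_{S_1}$. It therefore suffices to write $M=E+H$ with $\|E\|_{S_1}\le\varepsilon\sum_\ell|z_\ell|^p$ and $\operatorname{rank}H\le C_{p,\varepsilon}(m^{\vartheta_p}+\log(2m))$.

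\emph{Main obstacle: the row-count term.} Lemma~\ref{lem:rectangular-corner-rank} bounds the error by $\eta\bigl(r+\sum_j v_j^p\bigr)$. The term $\sum_j v_j^p=2^p\sum_j x_j^p$ is harmless, but the number of rows $r$ is not controlled by $\sum|z|^p$, because the negative points may be tiny. I would resolve this by splitting the rows. Let $M=M_1+M_2$, where $M_1$ keeps the rows with $|y_i|\ge1/4$, and $M_2$ keeps the rows with $|y_i|<1/4$.

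For a row of $M_2$, a nonzero entry requires $x_j>1/2-|y_i|>1/4$. So $M_2$ is supported on the columns with $x_j>1/4$.

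\begin{itemize}
\item For $M_1$, use the first parametrisation. Its row count is at most $4^p\sum_\ell|z_\ell|^p$, so Lemma~\ref{lem:rectangular-corner-rank} gives
\[
\|E_1\|_{S_1}\le\eta\,(4^p+2^p)\sum_\ell|z_\ell|^p.
\]
\item For $M_2$, drop the zero columns and transpose, using the second parametrisation with $u'_j=1-2x_j$ and $v'_i=2|y_i|$. The new row count is $\#\{j:x_j>1/4\}\le 4^p\sum_\ell|z_\ell|^p$, and $\sum_i v_i'^p\le 2^p\sum_\ell|z_\ell|^p$. This gives the same bound on $\|E_2\|_{S_1}$.
\end{itemize}

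In both cases the number of rows plus columns is at most $m$, so the rank bound $C_{p,\eta}(m^{\vartheta_p}+\log(2m))$ of Lemma~\ref{lem:rectangular-corner-rank} applies. Set $E:=E_1+E_2$ (re-embedded into the full index set) and $H:=H_1+H_2$. Subadditivity of $\|\cdot\|_{S_1}$ and of rank, together with the choice $\eta:=\varepsilon/(2(4^p+2^p))$, yields both required estimates and completes the proof.
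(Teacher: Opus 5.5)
Your proposal is correct and follows essentially the same route as the paper. You rewrite $W_p(X)$ as an anti-diagonal block matrix whose off-diagonal block has the corner form $\rho_p((v_j-u_i)_+)$. You then split one side at $|z|=1/4$, so that the row-count term in Lemma~\ref{lem:rectangular-corner-rank} is bounded by $4^p\sum_\ell|z_\ell|^p$; the low block only sees columns with $|z|>1/4$, and you handle it with the transposed parametrisation. Finally you use $\operatorname{tr}_+(F)=\|E\|_{S_1}$ for the symmetric embedding. The only difference is cosmetic: you split the negative points, whereas the paper splits the positive ones into $I_{\mathrm{hi}}$ and $I_{\mathrm{lo}}$.
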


\begin{proof}[Proof of Proposition~\ref{prop:strong-torus-corner}]
Reorder the sample so that the positive points come first and the negative
points second. Points equal to \(0\) give zero rows and columns in \(W_p(X)\), and
may be ignored.

Write the positive points as
\(
x_i=\frac{a_i}{2},\) with \(a_i\in(0,1] \),
and the negative points as
\(
y_j=-\frac{b_j}{2},\) with \(b_j\in(0,1].\)

Same-sign pairs give no contribution, since then \(|x-y|\le1/2\) and hence
\(\rho(x-y)=|x-y|\).

For an opposite-sign pair \(x=a/2>0\), \(y=-b/2<0\), one has
\[
|x-y|=\frac{a+b}{2}.
\]
If \(a+b\le1\), then again \(\rho(x-y)=|x-y|\), so \(W_p(x,y)=0\). If
\(a+b>1\), then
\[
\rho(x-y)=1-\frac{a+b}{2}=\frac{2-a-b}{2}.
\]
Therefore, in all cases,
\[
W_p(x,y)
=
\rho_p\bigl((a+b-1)_+\bigr),
\]
where
\(
\rho_p(t)=2^{-p}\bigl((1+t)^p-(1-t)^p\bigr).
\) Thus, after the above ordering,
\[
W_p(X)=
\begin{pmatrix}
0&D\\
D^{\mathsf T}&0
\end{pmatrix},
\]
where
\(
D_{ij}=\rho_p\bigl((a_i+b_j-1)_+\bigr).
\)

Split the positive and negative indices into
\[
I_{\mathrm{hi}}:=\{i:a_i\ge1/2\},
\qquad
I_{\mathrm{lo}}:=\{i:a_i<1/2\},
\]
and
\[
J_{\mathrm{hi}}:=\{j:b_j\ge1/2\},
\qquad
J_{\mathrm{lo}}:=\{j:b_j<1/2\}.
\]
On \(I_{\mathrm{lo}}\times J_{\mathrm{lo}}\), one has \(a_i+b_j<1\), and hence
the corresponding block of \(D\) vanishes.

First consider the block \(I_{\mathrm{hi}}\times\{1,\dots,s\}\). Put
\(
u_i:=1-a_i\) and
\(
v_j:=b_j.
\)
Then
\(
a_i+b_j-1=v_j-u_i,
\)
and the block has the form
\(
\rho_p\bigl((v_j-u_i)_+\bigr).
\)
Applying Lemma~\ref{lem:rectangular-corner-rank} with parameter \(\eta\in(0,1]\),
we get a decomposition of this block into \(E_1+H_1\), with
\[
\|E_1\|_{S_1}
\le
\eta\left(|I_{\mathrm{hi}}|+\sum_jb_j^p\right)
\]
and
\(
\operatorname{rank}H_1
\le
C_{p,\eta}\bigl(m^{\vartheta_p}+\log(2m)\bigr).
\)
Since \(a_i\ge1/2\) on \(I_{\mathrm{hi}}\),
\(
|I_{\mathrm{hi}}|
\le
2^p\sum_{i\in I_{\mathrm{hi}}}a_i^p.
\)
Thus
\[
\|E_1\|_{S_1}
\le
C_p\eta\left(\sum_i a_i^p+\sum_j b_j^p\right).
\]

Next consider the block \(I_{\mathrm{lo}}\times J_{\mathrm{hi}}\). Transpose it
and put
\(
u_j:=1-b_j\) and
\(
v_i:=a_i.
\)
Then
\(
a_i+b_j-1=v_i-u_j,
\)
so the transposed block again has the form
\(
\rho_p\bigl((v_i-u_j)_+\bigr).
\)
Lemma~\ref{lem:rectangular-corner-rank} gives a decomposition \(E_2+H_2\), with
\[
\|E_2\|_{S_1}
\le
\eta\left(|J_{\mathrm{hi}}|+\sum_{i\in I_{\mathrm{lo}}}a_i^p\right)
\]
and
\(
\operatorname{rank}H_2
\le
C_{p,\eta}\bigl(m^{\vartheta_p}+\log(2m)\bigr).
\)
Since \(b_j\ge1/2\) on \(J_{\mathrm{hi}}\),
\(
|J_{\mathrm{hi}}|
\le
2^p\sum_{j\in J_{\mathrm{hi}}}b_j^p,
\)
and therefore
\[
\|E_2\|_{S_1}
\le
C_p\eta\left(\sum_i a_i^p+\sum_j b_j^p\right).
\]

Combining the two rectangular pieces, we obtain
\(
D=E_D+H_D,
\)
with
\(
\|E_D\|_{S_1}
\le
C_p\eta\left(\sum_i a_i^p+\sum_j b_j^p\right),
\)
and
\(
\operatorname{rank}H_D
\le
C_{p,\eta}\bigl(m^{\vartheta_p}+\log(2m)\bigr).
\)

Now embed this rectangular decomposition into the full symmetric matrix:
\[
F:=
\begin{pmatrix}
0&E_D\\
E_D^{\mathsf T}&0
\end{pmatrix},
\qquad
G:=
\begin{pmatrix}
0&H_D\\
H_D^{\mathsf T}&0
\end{pmatrix}.
\]
Then
\[
W_p(X)=F+G.
\]
The nonzero eigenvalues of \(F\) are precisely the singular values of \(E_D\),
with both signs. Hence
\[
\operatorname{tr}_+(F)=\|E_D\|_{S_1}.
\]
Also,
\(
\operatorname{rank}G\le2\operatorname{rank}H_D.
\)
Consequently,
\[
\operatorname{tr}_+(F)
\le
C_p\eta\left(\sum_i a_i^p+\sum_j b_j^p\right),
\]
and
\(
\operatorname{rank}G
\le
C_{p,\eta}\bigl(m^{\vartheta_p}+\log(2m)\bigr).
\)

Finally, since \(a_i=2|x_i|\) and \(b_j=2|y_j|\), we have
\[
\sum_i a_i^p+\sum_j b_j^p
=
2^p\sum_{\ell=1}^m |z_\ell|^p.
\]
Choosing \(\eta=\varepsilon/(C_p2^p)\), we obtain
\[
\operatorname{tr}_+(F)
\le
\varepsilon\sum_{\ell=1}^m |z_\ell|^p.
\]
The rank estimate becomes
\[
\operatorname{rank}G
\le
C_{p,\varepsilon}\bigl(m^{\vartheta_p}+\log(2m)\bigr).
\]
This completes the proof.
\end{proof}

\subsection{Proof of the one-dimensional proposition}

\begin{proof}[Proof of Proposition~\ref{prop:one-dim-ple2}]
Let
\(
\varepsilon_0:=\frac{\varepsilon}{2}.
\)
We identify \(\T\) with \([-1/2,1/2)\), and write \(x_i\) also for the
representative of \(x_i\) in this interval. Thus
\(
\rho(x_i)=|x_i|.
\)
For \(x,y\in[-1/2,1/2)\), define
\[
B_{\R}(x,y):=
\frac12\bigl(|x|^p+|y|^p-|x-y|^p\bigr),
\]
and
\[
W(x,y):=
\frac12\bigl(|x-y|^p-\rho(x-y)^p\bigr).
\]
Then
\[
B_p(x,y)=B_{\R}(x,y)+W(x,y),
\]
and hence the matrix \(M\) decomposes as
\(
M=B_{\R}+W.
\)

By Lemma~\ref{lem:real-line-part-ple2}, applied with \(\varepsilon_0\), there are
real symmetric matrices \(L_{\R},R_{\R}\) such that
\(
B_{\R}\preceq L_{\R}+R_{\R},
\)
\(
\operatorname{rank}L_{\R}\le C_{p,\varepsilon}\log(2N),
\)
\(
R_{\R}\succeq0,
\)
and
\[
\operatorname{tr}_+(R_{\R})
=
\operatorname{tr}(R_{\R})
\le
\varepsilon_0\sum_{i=1}^m |x_i|^p
+
C_{p,\varepsilon}\frac{m}{N}.
\]

By Proposition~\ref{prop:strong-torus-corner}, applied with \(\varepsilon_0\), the
matrix
\[
W_p(X)=\bigl(|x_i-x_j|^p-\rho(x_i-x_j)^p\bigr)_{i,j=1}^m
\]
admits a decomposition
\(
W_p(X)=F_0+G_0
\)
such that
\(
\operatorname{tr}_+(F_0)
\le
\varepsilon_0\sum_{i=1}^m |x_i|^p,
\)
and
\[
\operatorname{rank}G_0
\le
C_{p,\varepsilon}\bigl(m^{\vartheta_p}+\log(2m)\bigr).
\]
Since \(W=\frac12 W_p(X)\), we may write
\(
W=F+G
\)
with
\(
\operatorname{tr}_+(F)
\le
\varepsilon_0\sum_{i=1}^m |x_i|^p
\)
and
\[
\operatorname{rank}G
\le
C_{p,\varepsilon}\bigl(m^{\vartheta_p}+\log(2m)\bigr).
\]

Finally set
\(
L:=L_{\R}+G\) and
\(
R:=R_{\R}+F.
\)
Then
\[
M=B_{\R}+W
\preceq
L_{\R}+R_{\R}+F+G
=
L+R.
\]
Moreover,
\[
\operatorname{rank}L
\le
\operatorname{rank}L_{\R}+\operatorname{rank}G
\le
C_{p,\varepsilon}\bigl(m^{\vartheta_p}+\log(2m)+\log(2N)\bigr).
\]
By subadditivity of \(\operatorname{tr}_+\),
\[
\operatorname{tr}_+(R)
\le
\operatorname{tr}_+(R_{\R})+\operatorname{tr}_+(F)
\le
\varepsilon\sum_{i=1}^m \rho(x_i)^p
+
C_{p,\varepsilon}\frac{m}{N}.
\]
This proves the proposition.
\end{proof}

\section{Lower bounds for \(e_{p}(\mathbb{T}^{n})\)}
\subsection{Proof of Theorem~\ref{thm:odd-prime-power-lower-bound}}
Let \( V:=\mathbb{F}_r^m\) be an $m$-dimensional vector space over $\mathbb{F}_r$, and then \( |V|=r^m=q. \)
We write
\[
V^*:=\operatorname{Hom}_{\mathbb F_r}(V,\mathbb F_r)
\]
for the dual space of $V$, namely the set of all $\mathbb F_r$-linear maps
\(
\varphi:V\to \mathbb F_r.
\)
Elements of $V^*$ are called \emph{linear functionals} on $V$. It is a standard fact that $V^*$ is again an $m$-dimensional vector space over $\mathbb F_r$.
In particular,
\(
|V^*|=r^m=q.
\)
For these basic facts on dual spaces, see for example~\cite{Roman2008}.

Since $r$ is odd, every nonzero functional $\varphi\in V^*$ is distinct from $-\varphi$.
Therefore the set $V^*\setminus\{0\}$ is partitioned into disjoint pairs \( \{\varphi,-\varphi\}.\)
Notice that \( |V^*\setminus\{0\}|=q-1=2n, \)
there are exactly $n$ such pairs.
Choose one representative from each pair and denote them by
\[
\varphi_1,\dots,\varphi_n\in V^*\setminus\{0\}.
\]
We also identify $\mathbb F_r$ with the set of integers
\(
\{0,1,\dots,r-1\},
\)
and then with the corresponding subset of $\T=\mathbb R/\mathbb Z$ via
\[
a\rightarrow \frac{a}{r}.
\]
Now define a map
\(
\eta:V\to \T^n
\)
by
\[
\eta(\boldsymbol{x})
:=
\left(
\frac{\varphi_1(\boldsymbol{x})}{r},
\frac{\varphi_2(\boldsymbol{x})}{r},
\dots,
\frac{\varphi_n(\boldsymbol{x})}{r}
\right).
\]
We claim that $\eta(V)$ is an equilateral set in $(\T^n,d_p)$.
Let $\boldsymbol{x},\boldsymbol{y}\in V$ be distinct, and write
\(
\boldsymbol{u}:=\boldsymbol{x}-\boldsymbol{y}\neq \boldsymbol{0}.
\)
Consider the evaluation map
\(
E_{\boldsymbol{u}}:V^*\to \mathbb F_r,
\)
with
\(
E_{\boldsymbol{u}}(\varphi):=\varphi(\boldsymbol{u}).
\)
Since $\boldsymbol{u}\neq \boldsymbol{0}$, this is a nonzero linear functional on the $m$-dimensional space $V^*$.
Hence $E_{\boldsymbol{u}}$ is surjective, and every fiber has cardinality
\(
|E_{\boldsymbol{u}}^{-1}(a)|=r^{m-1}
\)
for each \(a\in\mathbb F_r.\) In particular, among all functionals in $V^*$,
\begin{itemize}
\item exactly $r^{m-1}$ satisfy $\varphi(\boldsymbol{u})=a$ for each $a\in\mathbb F_r$;
\item hence, among the nonzero functionals in $V^*\setminus\{0\}$, exactly $r^{m-1}-1$ satisfy $\varphi(\boldsymbol{u})=0$, and for each nonzero $a\in\mathbb F_r^\times$, exactly $r^{m-1}$ satisfy $\varphi(\boldsymbol{u})=a$.
\end{itemize}
Now pass from nonzero functionals to the chosen representatives
\(
\varphi_1,\dots,\varphi_n,
\)
that is, to the quotient by the involution $\varphi\rightarrow -\varphi$. For each integer
\(
t\in\left\{1,2,\dots,\frac{r-1}{2}\right\},
\)
the set of nonzero field elements $\{\pm t\}$ consists of two distinct elements of $\mathbb F_r$.
Since each of the values $t$ and $-t$ occurs exactly $r^{m-1}$ times among all nonzero functionals, it follows that among the chosen representatives $\varphi_1,\dots,\varphi_n$, exactly
\(
r^{m-1}
\)
of them satisfy
\(
\varphi_j(\boldsymbol{u})\in\{t,-t\}.
\)
Also, exactly
\(
\frac{r^{m-1}-1}{2}
\)
of the chosen representatives satisfy
\(
\varphi_j(\boldsymbol{u})=0.
\)

By the definitions, for a coordinate $j$ with $\varphi_j(\boldsymbol{u})\in\{t,-t\}$, the $j$-th coordinate difference is either $t/r$ or $1-t/r$, and hence the circular distance is exactly
\(
\frac{t}{r}.
\)
Therefore
\[
d_p\bigl(\eta(\boldsymbol{x}),\eta(\boldsymbol{y})\bigr)^p
=
r^{m-1}\sum_{t=1}^{(r-1)/2}\left(\frac{t}{r}\right)^p.
\]
Thus $\eta(V)$ is indeed an equilateral set in $(\T^n,d_p)$.

It remains only to note that $\eta$ is injective. Indeed, if $\boldsymbol{x}\neq \boldsymbol{y}$, then $\boldsymbol{u}=\boldsymbol{x}-\boldsymbol{y}\neq \boldsymbol{0}$, and hence there exists some $\varphi\in V^*$ with $\varphi(\boldsymbol{u})\neq 0$.
Therefore the pair $\{\varphi,-\varphi\}$ contributes one chosen representative $\varphi_j$ with
\(
\varphi_j(\boldsymbol{u})\neq 0,
\)
so the $j$-th coordinates of $\eta(\boldsymbol{x})$ and $\eta(\boldsymbol{y})$ are different.
Thus
\(
|\eta(V)|=|V|=r^m=q=2n+1.
\)
This finishes the proof.

\subsection{Proof of Theorem~\ref{thm:lower-bound-4n-plus-1}}
For an odd prime \(q\) and an integer \(u\), write
\(
\|u\|_q\in\left\{0,1,\dots,\frac{q-1}{2}\right\}
\)
for the unique integer such that
\(
\|u\|_q\equiv \pm u \pmod q.
\)
We also write \(\chi_q\) for the quadratic character modulo \(q\), so that
\(\chi_q(0)=0\), \(\chi_q(a)=1\) if \(a\) is a nonzero quadratic residue
modulo \(q\), and \(\chi_q(a)=-1\) otherwise.

We first construct a \(q\)-point
equilateral seed from quadratic residues.
\begin{lemma}\label{lem:quadratic-seed}
Let \(q\) be a prime with \(q\equiv 1\pmod 4\). Put
\(
M:=\frac{q-1}{2}
\)
and define
\(
R_q:=\{a\in\{1,\dots,M\}:\chi_q(a)=1\}.
\)
For \(s>0\), let
\(
F_q(s):=\sum_{a=1}^{M}\chi_q(a)a^s.
\)
If \(F_q(p)=0\) for some \(p>0\), then
\[
B_q:=
\left\{
\boldsymbol{
b}_j=\left(\frac{aj}{q}\right)_{a\in R_q}:j\in\F_q
\right\}
\subseteq \T^{(q-1)/4}
\]
is a \(q\)-point equilateral set in \((\T^{(q-1)/4},d_p)\).
\end{lemma}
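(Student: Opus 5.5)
The plan is to compute $d_p(\boldsymbol b_j,\boldsymbol b_{j'})^p$ directly and show it depends only on whether $j-j'$ is a residue or a nonresidue. The condition $F_q(p)=0$ then forces the two possible values to coincide. First, the dimension is right: since $q\equiv1\pmod 4$, we have $\chi_q(-1)=1$, so $-a$ is a residue exactly when $a$ is. Hence the $(q-1)/2$ nonzero residues split into pairs $\{a,-a\}$, with exactly one member of each pair in $\{1,\dots,M\}$, and therefore $|R_q|=(q-1)/4$.

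For distinct $j,j'$, put $u:=j-j'\ne0$. The $a$-th coordinate difference of $\boldsymbol b_j-\boldsymbol b_{j'}$ is $au/q$ modulo $1$, and its circular distance is $\|au\|_q/q$. So we get
\[
q^p\, d_p(\boldsymbol b_j,\boldsymbol b_{j'})^p=\sum_{a\in R_q}\|au\|_q^p .
\]
The map $a\mapsto \|au\|_q$ is a bijection from $R_q$ onto a half-system. Indeed, as $a$ runs over $R_q$, the pairs $\{\pm a\}$ run over all residue pairs. Multiplying by $u$ sends these to all residue pairs if $\chi_q(u)=1$, and to all nonresidue pairs if $\chi_q(u)=-1$. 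Each pair $\{\pm c\}$ contributes $\|c\|_q^p$, which is the unique representative in $\{1,\dots,M\}$ raised to the $p$-th power. Also, $\|c\|_q$ has the same quadratic character as $c$, again because $\chi_q(-1)=1$. Therefore the sum equals $S_+:=\sum_{a\le M,\ \chi_q(a)=1}a^p$ when $u$ is a residue, and $S_-:=\sum_{a\le M,\ \chi_q(a)=-1}a^p$ when $u$ is a nonresidue.

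Finally, $F_q(p)=S_+-S_-=0$, so the distance is the same for every pair. The distance is also nonzero, since $S_+>0$ because $1\in R_q$. It follows that $j\mapsto\boldsymbol b_j$ is injective, so $B_q$ has $q$ points and is equilateral.

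The step that needs the most care is the bijection claim: multiplication by $u$ permutes the residue pairs, or maps them onto the nonresidue pairs. It uses that the squares form an index-2 subgroup of $\F_q^\times$ containing $-1$. Everything else is bookkeeping.
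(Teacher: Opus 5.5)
Your proof is correct and follows the paper's route. You compute $q^p d_p(\boldsymbol b_j,\boldsymbol b_k)^p=\sum_{a\in R_q}\|ah\|_q^p$, use $\chi_q(-1)=1$ to see that multiplication by $h$ maps $R_q$ onto $R_q$ or onto the nonresidues in $\{1,\dots,M\}$ according to $\chi_q(h)$, and conclude from $F_q(p)=0$. The only cosmetic difference is injectivity of $j\mapsto\boldsymbol b_j$: you get it from positivity of the common distance (since $1\in R_q$), while the paper argues directly that $a(j-k)\equiv 0 \pmod q$ for $a\in R_q$ forces $j=k$.
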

\begin{proof}[Proof of Lemma~\ref{lem:quadratic-seed}]
    Since \(q\equiv1\pmod4\), we have \(\chi_q(-1)=1\). Hence the two elements
\(a\) and \(-a\) have the same quadratic character. Therefore, among the
representatives
\(
1,2,\dots,M,
\)
exactly half are quadratic residues. Thus
\[
|R_q|=\frac{M}{2}=\frac{q-1}{4}.
\]
Let
\(
N_q:=\{1,\dots,M\}\setminus R_q.
\)
Notice that the points \(\boldsymbol{b}_j\) are distinct. Indeed, if \(\boldsymbol{b}_j=\boldsymbol{b}_k\), then
for every \(a\in R_q\) we have \(a(j-k)\equiv0\pmod q\). Since \(a\not\equiv0\pmod q\),
this implies \(j=k\). Hence \(|B_q|=q\). Now take \(j\neq k\), and put
\(
h:=j-k\in\F_q^\times.
\)
Then
\[
d_p(\boldsymbol{b}_j,\boldsymbol{b}_k)^p
=
q^{-p}\sum_{a\in R_q}\|ah\|_q^p.
\]
If \(\chi_q(h)=1\), multiplication by \(h\) preserves quadratic residues. Since
\(\chi_q(-1)=1\), it also preserves the quadratic character on each pair
\(\{\pm u\}\). Thus
\(
\{\|ah\|_q:a\in R_q\}=R_q.
\)
If \(\chi_q(h)=-1\), multiplication by \(h\) swaps residues and nonresidues on
the pairs \(\{\pm u\}\), and therefore
\(
\{\|ah\|_q:a\in R_q\}=N_q.
\)
Consequently,
\(
d_p(\boldsymbol{b}_j,\boldsymbol{b}_k)^p\) equals either \(
q^{-p}\sum_{u\in R_q}u^p\) or
\(
q^{-p}\sum_{u\in N_q}u^p.
\)
Notice that
\[
F_q(p)
=
\sum_{u\in R_q}u^p-\sum_{u\in N_q}u^p=0,
\]
which yields \(B_{q}\) is equilateral.
\end{proof}

The next lemma lifts a \(q\)-point seed to a larger equilateral set using the
standard projective simplex code.
\begin{lemma}\label{lem:simplex-lift-4n}
Let \(q\) be a prime, let \(p>0\), and let
\(
B=\{b_\alpha:\alpha\in\F_q\}\subseteq \T^r
\)
be a \(q\)-point equilateral set in \((\T^r,d_p)\), with common nonzero distance
\(\delta\). Then for every integer \(m\ge1\), the space
\(
\left(\T^{\,r(q^m-1)/(q-1)},d_p\right)
\)
contains an equilateral set of cardinality \(q^m\).
\end{lemma}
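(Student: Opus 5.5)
We use the projective simplex code over \(\F_q\), exactly as in the proof of Theorem~\ref{thm:odd-prime-power-lower-bound}, but with the seed \(B\) in place of the single coordinate \(a\mapsto a/r\). Let \(V:=\F_q^m\). Choose representatives \(\varphi_1,\dots,\varphi_K\in V^*\setminus\{0\}\), one from each one-dimensional subspace (projective point) of \(V^*\). There are \(K=(q^m-1)/(q-1)\) of them. Define
\[
\Phi:V\to \T^{rK},\qquad \Phi(\boldsymbol x):=\bigl(b_{\varphi_1(\boldsymbol x)},\dots,b_{\varphi_K(\boldsymbol x)}\bigr).
\]
Each block \(b_{\varphi_j(\boldsymbol x)}\in\T^r\) is the seed point indexed by the field element \(\varphi_j(\boldsymbol x)\in\F_q\). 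We aim to show that \(\Phi(V)\) is equilateral of cardinality \(q^m\).

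Fix distinct \(\boldsymbol x,\boldsymbol y\in V\) and set \(\boldsymbol u:=\boldsymbol x-\boldsymbol y\ne\boldsymbol 0\). Then
\[
d_p(\Phi(\boldsymbol x),\Phi(\boldsymbol y))^p=\sum_{j=1}^K d_p\bigl(b_{\varphi_j(\boldsymbol x)},b_{\varphi_j(\boldsymbol y)}\bigr)^p .
\]
The \(j\)-th block contributes \(\delta^p\) when \(\varphi_j(\boldsymbol x)\neq\varphi_j(\boldsymbol y)\), that is, when \(\varphi_j(\boldsymbol u)\ne0\). It contributes \(0\) otherwise. This uses only that \(B\) is equilateral with common distance \(\delta\), so the value does not depend on which pair of distinct seed points appears. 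Hence the \(p\)-th power of the distance is \(\delta^p\) times the number of projective points \(\varphi_j\) with \(\varphi_j(\boldsymbol u)\ne0\).

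The evaluation map \(E_{\boldsymbol u}:V^*\to\F_q\) is a nonzero linear functional. Its kernel is therefore a hyperplane of \(V^*\) containing \(q^{m-1}\) functionals, so exactly \(q^m-q^{m-1}\) nonzero functionals satisfy \(\varphi(\boldsymbol u)\neq 0\). This condition is invariant under nonzero scalar multiplication, so exactly \((q^m-q^{m-1})/(q-1)=q^{m-1}\) of the chosen representatives satisfy it. Consequently every pair of distinct points is at distance \(\delta\,q^{(m-1)/p}\), which does not depend on the pair. The same count (\(q^{m-1}\ge1\)) together with \(\delta>0\) shows \(\Phi(\boldsymbol x)\ne\Phi(\boldsymbol y)\), so \(\Phi\) is injective and \(|\Phi(V)|=q^m\).

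The only step needing care is the counting of projective points off the hyperplane \(\ker E_{\boldsymbol u}\), which is standard. Note also that, unlike the odd prime power construction, we do not need \(\{\varphi,-\varphi\}\) symmetry here. The full projective point set is used, and the seed absorbs all the metric information.
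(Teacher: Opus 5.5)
Your proposal is correct and is essentially the paper's proof. The paper likewise composes the projective simplex code \(\boldsymbol u\mapsto(\boldsymbol u\cdot\boldsymbol v_i)_{i}\) with the seed. Its dot products with representatives of the one-dimensional subspaces of \(\F_q^m\) are exactly your functionals \(\varphi_j\) under the identification \(V^*\cong\F_q^m\), and it shows that exactly \(q^{m-1}\) blocks differ. The only cosmetic difference is in that count: you count the nonzero functionals off \(\ker E_{\boldsymbol u}\) directly, whereas the paper subtracts the \((q^{m-1}-1)/(q-1)\) projective points of the hyperplane \(\boldsymbol w^{\perp}\) from the total.
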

\begin{proof}[Proof of Lemma~\ref{lem:simplex-lift-4n}]
Let
\(
L:=\frac{q^m-1}{q-1}.
\)
Choose representatives
\(
\boldsymbol{v}_1,\dots,\boldsymbol{v}_L
\)
of the one-dimensional subspaces of \(\F_q^m\). For
\(\boldsymbol{u}\in\F_q^m\), define
\[
c(\boldsymbol{u})
:=
(\boldsymbol{u}\cdot\boldsymbol{v}_1,\dots,\boldsymbol{u}\cdot\boldsymbol{v}_L)
\in\F_q^L.
\]
We first recall the standard distance property of this simplex code. If
\(\boldsymbol{u}\ne\boldsymbol{u}'\), put
\(
\boldsymbol{w}:=\boldsymbol{u}-\boldsymbol{u}'\ne\boldsymbol{0}.
\)
The coordinates \(i\) for which
\(
\boldsymbol{u}\cdot\boldsymbol{v}_i
=
\boldsymbol{u}'\cdot\boldsymbol{v}_i
\)
are exactly those for which
\[
\boldsymbol{v}_i\in \boldsymbol{w}^{\perp}:=
\{\boldsymbol{x}\in\F_q^m:\boldsymbol{w}\cdot\boldsymbol{x}=0\}.
\]
Since \(\boldsymbol{w}\ne\boldsymbol{0}\), the hyperplane
\(\boldsymbol{w}^{\perp}\) has dimension \(m-1\), and hence contains
\(
\frac{q^{m-1}-1}{q-1}
\)
one-dimensional subspaces. Therefore the number of coordinates on which
\(c(\boldsymbol{u})\) and \(c(\boldsymbol{u}')\) differ is
\[
L-\frac{q^{m-1}-1}{q-1}=q^{m-1}.
\]

Now define
\[
\Phi(\boldsymbol{u})
:=
\bigl(
\boldsymbol{b}_{\boldsymbol{u}\cdot\boldsymbol{v}_1},
\dots,
\boldsymbol{b}_{\boldsymbol{u}\cdot\boldsymbol{v}_L}
\bigr)
\in(\T^r)^L=\T^{rL}.
\]
If \(\boldsymbol{u}\ne\boldsymbol{u}'\), then the previous paragraph shows that
exactly \(q^{m-1}\) blocks of \(\Phi(\boldsymbol{u})\) and
\(\Phi(\boldsymbol{u}')\) are different. In each different block, the two
entries are distinct points of the seed \(B\), and therefore contribute
\(\delta^p\) to the \(p\)-th power of the distance. The remaining blocks
contribute \(0\). Hence
\[
d_p(\Phi(\boldsymbol{u}),\Phi(\boldsymbol{u}'))^p
=
q^{m-1}\delta^p,
\]
independently of the choice of distinct
\(\boldsymbol{u},\boldsymbol{u}'\in\F_q^m\). Thus
\(
\{\Phi(\boldsymbol{u}):\boldsymbol{u}\in\F_q^m\}
\)
is equilateral in \(\T^{rL}\).

Moreover, if \(\boldsymbol{u}\ne\boldsymbol{u}'\), then
\(c(\boldsymbol{u})\) and \(c(\boldsymbol{u}')\) differ in \(q^{m-1}\) coordinates.
In particular, \(\Phi(\boldsymbol{u})\ne\Phi(\boldsymbol{u}')\). Therefore the
constructed equilateral set has cardinality
\(
|\F_q^m|=q^m.
\)
This completes the proof.
\end{proof}

We now show that infinitely many primes provide a seed exponent \(p_q\).

\begin{prop}\label{prop:four-legendre-large-root}
There exists a constant \(Q_0\) with the following property. Let \(q\ge Q_0\) be
a prime satisfying
\(
q\equiv1\pmod4\) and
\(
\chi_q(3)=\chi_q(5)=\chi_q(7)=\chi_q(11)=-1.
\)
Put
\(
M:=\frac{q-1}{2}
\)
and
\(
F_q(s):=\sum_{a=1}^{M}\chi_q(a)a^s.
\)
Then there exists
\(
p_q>\frac{q-1}{4}
\)
such that
\(
F_q(p_q)=0.
\)
Consequently, for every integer \(m\ge1\), if
\(
n=\frac{q^m-1}{4},
\)
then
\(
e_{p_q}(\T^n)\ge4n+1.
\)
\end{prop}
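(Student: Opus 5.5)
The plan is to show that $F_q$ changes sign on $\bigl(\tfrac{q-1}{4},\infty\bigr)$ and then apply the intermediate value theorem, since $F_q$ is continuous in $s$. To see the sign structure, I will reindex the sum from its top end, writing $a=M-j$ with $0\le j\le M-1$.

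\textbf{Rewriting the character values.} Since $2M=q-1\equiv-1\pmod q$, we have $M-j\equiv-\tfrac{1+2j}{2}\pmod q$. Because $q\equiv1\pmod4$ gives $\chi_q(-1)=1$, this yields $\chi_q(M-j)=\chi_q(2)\chi_q(2j+1)$. Hence
\[
F_q(s)=\chi_q(2)\,M^s\,G_q(s),\qquad G_q(s):=\sum_{j=0}^{M-1}\chi_q(2j+1)\Bigl(1-\frac jM\Bigr)^{s}.
\]
It therefore suffices to find a sign change of $G_q$ on $\bigl(\tfrac{q-1}{4},\infty\bigr)$.

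\textbf{Large $s$.} As $s\to\infty$, the term $j=0$ dominates because the remaining terms tend to $0$, so $G_q(s)\to1>0$.

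\textbf{The point $s_0:=\tfrac{q-1}{4}=\tfrac M2$.} By hypothesis the first six coefficients $\chi_q(2j+1)$, $j=0,\dots,5$, are $\chi_q(1),\chi_q(3),\chi_q(5),\chi_q(7),\chi_q(9),\chi_q(11)=1,-1,-1,-1,1,-1$. For each fixed $j\le5$ we have $(1-j/M)^{M/2}\to e^{-j/2}$ as $q\to\infty$. For the tail $j\ge6$, the bound $(1-j/M)^{M/2}\le e^{-j/2}$ gives
\[
\Bigl|\sum_{j\ge6}\chi_q(2j+1)\Bigl(1-\frac jM\Bigr)^{M/2}\Bigr|\le\sum_{j\ge6}e^{-j/2}=\frac{e^{-3}}{1-e^{-1/2}}<0.127.
\]
The main terms satisfy
\[
1-e^{-1/2}-e^{-1}-e^{-3/2}+e^{-2}-e^{-5/2}<-0.144.
\]
Hence $G_q(s_0)<-0.144+0.127+o(1)<0$ once $q\ge Q_0$, where $Q_0$ is chosen so that the $o(1)$ error from the six leading terms is below the margin of about $0.017$.

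\textbf{Main obstacle.} The margin here is thin, so this numerical step is where the argument could fail. The estimates must be checked with rigorous numerical bounds, namely upper bounds on the exponentials with the correct signs. The error term for $j\le5$ should be controlled explicitly, for instance through $0\le e^{-j/2}-(1-j/M)^{M/2}\le C j^2/M$. If the margin turned out too small, I would first try evaluating at a slightly different point $s=cM$ with $c$ near $1/2$ but still $>1/2$, or retaining more exact terms before bounding the tail.

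\textbf{Conclusion.} By the intermediate value theorem, $G_q$, and hence $F_q$, has a zero $p_q$ with $p_q>s_0=\tfrac{q-1}{4}$; if one prefers a strict interior point, note the sign at $s_0$ is strict, so the zero lies to the right of $s_0$. Lemma~\ref{lem:quadratic-seed} then shows that $B_q$ is a $q$-point equilateral set in $(\T^{(q-1)/4},d_{p_q})$. Lemma~\ref{lem:simplex-lift-4n} with $r=\tfrac{q-1}{4}$ gives an equilateral set of size $q^m$ in dimension
\[
\frac{q-1}{4}\cdot\frac{q^m-1}{q-1}=\frac{q^m-1}{4}=n,
\]
and $q^m=4n+1$. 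This gives $e_{p_q}(\T^n)\ge4n+1$.
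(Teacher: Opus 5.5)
Your proposal is correct and follows the paper's proof essentially step for step. It uses the same reindexing $a=M-j$ with $\chi_q(M-j)=\chi_q(2)\chi_q(2j+1)$, the same evaluation at $s=M/2$ with leading coefficients $1,-1,-1,-1,1,-1$ and tail bound $\sum_{j\ge 6}e^{-j/2}$, the limit $1$ as $s\to\infty$, and the intermediate value theorem followed by Lemmas~\ref{lem:quadratic-seed} and~\ref{lem:simplex-lift-4n}. Your decimal estimates (about $-0.1443$ for the main terms and $0.1265$ for the tail) are accurate; the paper only certifies the same negative constant more cleanly, writing it as $N(t)/(1-t)$ with $t=e^{-1/2}\in(3/5,61/100)$ and using the monotonicity of the polynomial $N$.
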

\begin{proof}[Proof of Proposition~\ref{prop:four-legendre-large-root}]
For \(x>0\), define
\(
H_q(x):=M^{-Mx}F_q(Mx).
\)
Then
\[
H_q(x)=\sum_{a=1}^{M}\chi_q(a)\left(\frac{a}{M}\right)^{Mx}.
\]
Reindex by writing \(a=M-j\), where \(0\le j\le M-1\). Since
\[
M-j=\frac{q-1}{2}-j\equiv-\frac{2j+1}{2}\pmod q,
\]
and since \(q\equiv1\pmod4\), we have \(\chi_q(-1)=1\). Put
\(
\varepsilon_q:=\chi_q(2)\in\{\pm1\}.
\)
Because \(\chi_q(2)^{-1}=\chi_q(2)\), we get
\[
\chi_q(M-j)
=
\chi_q(-1)\chi_q(2)^{-1}\chi_q(2j+1)
=
\varepsilon_q\chi_q(2j+1).
\]
Therefore
\[
\varepsilon_q H_q(x)
=
\sum_{j=0}^{M-1}\chi_q(2j+1)
\left(1-\frac{j}{M}\right)^{Mx}.
\]

We evaluate this expression at \(x=\frac12\). The assumptions give
\(
\chi_q(1)=1,\)
\(
\chi_q(3)=\chi_q(5)=\chi_q(7)=\chi_q(11)=-1\)
and 
\(
\chi_q(9)=\chi_q(3)^2=1.
\)
Hence the first six coefficients \(\chi_q(2j+1)\), for \(j=0,1,\dots,5\), are
\(1,-1,-1,-1,1,-1.\)
Thus
\[
\varepsilon_q H_q\!\left(\frac12\right)
\le
\sum_{j=0}^{5}s_j
\left(1-\frac{j}{M}\right)^{M/2}
+
\sum_{j=6}^{M-1}
\left(1-\frac{j}{M}\right)^{M/2},
\]
where
\(
(s_0,s_1,s_2,s_3,s_4,s_5)=(1,-1,-1,-1,1,-1).
\)
For each fixed \(j\),
\(
\left(1-\frac{j}{M}\right)^{M/2}\rightarrow e^{-j/2}\)
when \(M\to\infty\),
and for all \(0\le j\le M-1\),
\(
\left(1-\frac{j}{M}\right)^{M/2}\le e^{-j/2}.
\)
It follows that
\[
\limsup_{q\to\infty}
\varepsilon_q H_q\!\left(\frac12\right)
\le C,
\]
where
\(
C:=
1-e^{-1/2}-e^{-1}-e^{-3/2}+e^{-2}-e^{-5/2}
+
\sum_{j=6}^{\infty}e^{-j/2}.
\)
Since
\(
\sum_{j=6}^{\infty}e^{-j/2}
=
\frac{e^{-3}}{1-e^{-1/2}},
\)
we have
\[
C=
1-e^{-1/2}-e^{-1}-e^{-3/2}+e^{-2}-e^{-5/2}
+
\frac{e^{-3}}{1-e^{-1/2}}.
\]
Set \(t:=e^{-1/2}\). Then
\[
C
=
1-t-t^2-t^3+t^4-t^5+\frac{t^6}{1-t}
=
\frac{N(t)}{1-t},
\]
where
\(
N(t)=1-2t+2t^4-2t^5+2t^6.
\)
Notice that \(3/5<t<61/100\) and we can also compute that
\(
N'(u)=2(6u^5-5u^4+4u^3-1).
\)
Then one can check that \(N(t)\) is strictly decreasing on
\([0,61/100]\), and so
\[
N(t)<N\!\left(\frac35\right)
=
-\frac{47}{15625}<0.
\]
Since \(1-t>0\), we obtain
\(
C<0.
\)
Consequently, there exists \(Q_0\) such that for every prime \(q\ge Q_0\)
satisfying the hypotheses,
\(
\varepsilon_qH_q\!\left(\frac12\right)<0.
\)
On the other hand, as \(x\to\infty\), every term in the expression for
\(\varepsilon_qH_q(x)\) with \(j\ge1\) tends to \(0\), while the \(j=0\) term is
equal to \(1\). Thus
\[
\lim_{x\to\infty}\varepsilon_qH_q(x)=1>0.
\]
Since \(\varepsilon_qH_q(x)\) is continuous on \((0,\infty)\), the intermediate
value theorem gives some
\(
x_q>\frac12
\)
such that
\(
H_q(x_q)=0.
\)
Now set
\(
p_q:=Mx_q.
\)
Then
\(
F_q(p_q)=0
\)
and
\(
p_q>M\cdot\frac12=\frac{q-1}{4}.
\)
Finally, Lemma~\ref{lem:quadratic-seed} gives a \(q\)-point equilateral set in
\(
\T^{\frac{q-1}{4}}
\)
with respect to \(d_{p_q}\). Applying Lemma~\ref{lem:simplex-lift-4n} with
\(
r=\frac{q-1}{4}
\)
gives, for every \(m\ge1\), an equilateral set of cardinality \(q^m\) in
\(
\T^{\,\frac{q-1}{4}\cdot\frac{q^m-1}{q-1}}
=
\T^{\frac{q^{m}-1}{4}}.
\)
If \(n=\frac{q^{m}-1}{4}\), then \(q^m=4n+1\). Hence
\(
e_{p_q}(\T^n)\ge4n+1.
\)
    
\end{proof}

It remains to show that infinitely many primes satisfy the Legendre-symbol
conditions used above.

\begin{lemma}\label{lem:infinitely-many-q-4n}
There exist infinitely many primes \(q\) such that
\(
q\equiv1\pmod4\)
and
\(
\chi_q(3)=\chi_q(5)=\chi_q(7)=\chi_q(11)=-1.
\)
\end{lemma}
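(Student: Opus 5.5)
The plan is to use quadratic reciprocity to turn the five conditions into congruence conditions on \(q\), and then apply Dirichlet's theorem on primes in arithmetic progressions. For an odd prime \(\ell\) and a prime \(q\equiv1\pmod4\), quadratic reciprocity gives \(\chi_q(\ell)=\left(\frac{q}{\ell}\right)\), since \((-1)^{\frac{q-1}{2}\cdot\frac{\ell-1}{2}}=1\). So, once we impose \(q\equiv1\pmod4\), the condition \(\chi_q(\ell)=-1\) holds exactly when \(q\) is a quadratic nonresidue modulo \(\ell\), for each \(\ell\in\{3,5,7,11\}\).

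Next we fix one explicit residue class for each modulus. The nonresidues are \(2\) modulo \(3\), \(2\) modulo \(5\), \(3\) modulo \(7\) (the residues mod \(7\) are \(1,2,4\)), and \(2\) modulo \(11\) (the residues mod \(11\) are \(1,3,4,5,9\)). The moduli \(4,3,5,7,11\) are pairwise coprime, so the Chinese remainder theorem gives a single class \(c\) modulo \(4620=4\cdot3\cdot5\cdot7\cdot11\) satisfying
\[
c\equiv1\ (4),\quad c\equiv2\ (3),\quad c\equiv2\ (5),\quad c\equiv3\ (7),\quad c\equiv2\ (11).
\]
Each component of \(c\) is a unit modulo its modulus, so \(\gcd(c,4620)=1\).

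Dirichlet's theorem then gives infinitely many primes \(q\equiv c\pmod{4620}\). Each such prime with \(q>11\) satisfies \(q\equiv1\pmod4\), and by the reciprocity step above \(\chi_q(3)=\chi_q(5)=\chi_q(7)=\chi_q(11)=-1\). This proves the lemma.

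There is no real obstacle here. The only care needed is to check the sign in the reciprocity law, which is exactly where \(q\equiv1\pmod4\) is used, and to verify the nonresidue choices listed above.
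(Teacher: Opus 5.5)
Your proposal is correct and follows essentially the same route as the paper: quadratic reciprocity (using \(q\equiv1\pmod4\)) turns each condition \(\chi_q(\ell)=-1\) into a nonresidue condition on \(q\) modulo \(\ell\), the Chinese remainder theorem combines these into one coprime class modulo \(4620\), and Dirichlet's theorem supplies infinitely many such primes. Your explicit choices of nonresidues (\(2,2,3,2\) modulo \(3,5,7,11\)) are all correct.
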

\begin{proof}[Proof of Lemma~\ref{lem:infinitely-many-q-4n}]
    For each \(\ell\in\{3,5,7,11\}\), choose a quadratic nonresidue
\(
r_\ell\in(\mathbb Z/\ell\mathbb Z)^\times.
\)
By the Chinese remainder theorem, there exists an integer \(r\) such that
\(
r\equiv1\pmod4\)
and
\(
r\equiv r_\ell\pmod \ell\)
for each \(\ell=3,5,7,11.\)
In particular,
\(
\gcd(r,4\cdot3\cdot5\cdot7\cdot11)=1.
\)
By Dirichlet's theorem on primes in arithmetic progressions, there are
infinitely many primes
\(
q\equiv r\pmod{4620}.
\)
For every such prime \(q\), we have \(q\equiv1\pmod4\). Hence quadratic
reciprocity gives
\[
\chi_q(\ell)
=\chi_{\ell}(q)=\chi_{\ell}(r_\ell)=
-1
\]
for \(\ell=3,5,7,11.\) This proves the lemma.
\end{proof}
Proposition~\ref{prop:four-legendre-large-root} and Lemma~\ref{lem:infinitely-many-q-4n} together yields Theorem~\ref{thm:lower-bound-4n-plus-1}.

\begin{rmk}
	For the sake of keeping the proof concise, we have restricted ourselves to a convenient infinite family of sufficiently large primes \(q\), which already suffices to show that the values of \(p\) for which
	$e_p(\T^n)\ge 4n+1$
	holds for infinitely many \(n\) can be chosen arbitrarily large.
	In fact, if one only asks for the existence of some \(p_q>1\), rather than the stronger lower bound \(p_q>(q-1)/4\), then a similar argument shows that, for every prime
	$q\equiv 1\pmod 8,$
	there exists \(p_q>1\) such that
	$e_{p_q}(\T^n)\ge 4n+1$
	for every \(m\ge 1\), where
	$n=\frac{q^m-1}{4}.$
\end{rmk}

\section{Concluding remarks}

We have proved that Kusner's conjecture holds for every \(2\le p\le4\) and every
dimension \(n\). More generally, for \(p\in[4k+2,4k+4]\), we obtain the linear
bound
\(
e(\ell_p^n)\le (2k+1)n+1,
\)
while for the complementary intervals \(p\in(4k,4k+2)\),  we prove the
almost linear estimate
\(
e(\ell_p^n)\le C_{p,k}n\log(2n).
\)

The main difficulty in Kusner's conjecture is that the usual finite-dimensional
algebraic methods are naturally adapted to even exponents. When \(p\) is an even
integer, the kernel \(|x-y|^p\) is polynomial, and one can hope to control the
corresponding distance matrix by rank considerations. Away from the even
integers, however, the kernel is no longer finite rank in any direct sense. This
is the central obstruction behind the problem: one needs a way to extract enough
finite-dimensional structure from a non-polynomial kernel.

Our approach shows that this obstruction can be overcome on the intervals
\([4k+2,4k+4]\). The key point is not simply to approximate \(|x-y|^p\), but to
decompose it with the correct signs. More precisely, after adding a suitable
one-dimensional correction, the remaining kernel is conditionally negative
definite, while the positive oscillatory part has controlled rank. Once the
distance matrix is double-centered, the conditionally negative definite part
becomes positive semidefinite and can be absorbed into the favorable side of the
rank comparison. This sign-sensitive decomposition is what makes a linear bound
possible beyond the isolated even cases.

The complementary intervals \((4k,4k+2)\) appear to be genuinely harder. In this
range, the same finite rank correction no longer has the required sign pattern.
Our proof instead keeps an infinite-dimensional Hilbert space component and
then approximates it on dyadic shells. The logarithmic factor in
\(C_{p,k}n\log(2n)\) comes exactly from this multiscale approximation. Thus the
remaining problem is not merely a matter of sharpening constants, and it asks for a
new way to control all scales at once, or for a stronger use of the global
equilateral structure.

\begin{ques}
Let  \(p\in(4k,4k+2)\) and \(p\geq 1\). Is it true that
\(
e(\ell_p^n)\le C_{p}n
\)
for all \(n\ge1\)?
\end{ques}

A positive answer would show that the equilateral number is linear throughout
the whole super-Euclidean range \(p>2\). At
present, the main obstacle is the lack of a finite-dimensional decomposition
with the correct sign pattern on these intervals.

We also prove new upper bounds for the toroidal problem, namely
\(
e_p(\mathbb T^n)\le C_p n\log(2n)
\)
for \(p\in[1,2]\), and
\(
e_p(\mathbb T^n)\le C_p n^{3/2-1/p}
\)
for every fixed \(p>2\).
The toroidal problem introduces a different difficulty. Even after anchoring at
the origin, the cyclic distance creates a wrap-around correction which has no
analogue in \(\mathbb R^n\). This correction is supported near the opposite
corners of the circle and is not controlled by the Euclidean kernel alone. Our
proof handles it by reducing the problem to one-sided rectangular kernels and
then decomposing these kernels into a low-rank part plus a small positive-trace
error. This mechanism is strong enough to improve Alon's polynomial bounds for
all finite \(p\), but it still falls short of the expected linear estimate.

\begin{ques}
For every fixed \(p\ge 1\), is there a constant \(C_p\) such that
\(
e_p(\mathbb T^n)\le C_p n
\)
for all \(n\ge1\)?
\end{ques}

Even obtaining such a linear bound in the endpoint cases \(p=1\) and \(p=2\)
seems to require new ideas.  Our bound
\(
e_p(\mathbb T^n)\le C n\log(2n)
\)
for \(p\in[1,2]\) is within a logarithmic factor of the linear bound asked for
above.  It remains unclear, however, whether this logarithmic loss is merely an
artifact of the method or a genuine feature of the toroidal setting.

Finally, the methods developed here suggest a broader principle. In several
places, a global equilateral condition is transformed, after centering, into a
rank or positive-trace comparison through a carefully chosen kernel
decomposition. It would be interesting to see whether similar sign-sensitive
kernel decompositions can be used in other extremal problems where polynomial
methods capture only the even exponent or finite rank cases.

\bibliographystyle{abbrv}
\bibliography{Torus}
\end{document}